\title{Degenerate behavior in non-hyperbolic semigroup actions on the interval: fast growth of periodic points and universal dynamics}
\author{Masayuki Asaoka, Katsutoshi Shinohara and Dmitry Turaev}
\def\RR{\mathbb{R}}
\def\ZZ{\mathbb{Z}}
\def\cD{{\mathcal D}}
\def\cE{{\mathcal E}}
\def\cN{{\mathcal N}}
\def\cO{{\mathcal O}}
\def\cW{{\mathcal W}}
\def\cU{{\mathcal U}}
\def\cV{{\mathcal V}}
\def\cA{{\mathcal A}}
\def\ra{{\rightarrow}}
\def\vphi{\varphi}
\def\del{\partial}
\DeclareMathOperator{\Diff}{Diff}
\DeclareMathOperator{\Fix}{Fix}
\DeclareMathOperator{\Int}{Int}
\DeclareMathOperator{\supp}{supp}
\DeclareMathOperator{\sgn}{sgn}
\newcommand\ub[1]{{\underline{#1}}}
\theoremstyle{plain}
\newtheorem{thm}{Theorem}[section]
\newtheorem{prop}[thm]{Proposition}
\newtheorem{lemma}[thm]{Lemma}
\newtheorem{cor}[thm]{Corollary}
\theoremstyle{definition}
\theoremstyle{remark}
\newtheorem{rmk}[thm]{Remark}
\begin{document}

\maketitle

\begin{abstract}
We consider semigroup actions on the unit interval generated
by strictly increasing $C^r$-maps. We assume that one of the generators 
has a pair of fixed points, one attracting and one repelling, and a heteroclinic orbit that connects the repeller and attractor,
and the other generators form a robust blender, which can bring the points from a small neighborhood of the attractor to
an arbitrarily small neighborhood of the repeller. This is a model
setting for partially hyperbolic systems with one central direction.
We show that, under additional conditions on $\frac{f''}{f'}$ and the Schwarzian derivative, the above semigroups exhibit,
$C^r$-generically for any $r \geq 3$, arbitrarily fast growth of the number of periodic points as a function of the period.
We also show that a $C^r$-generic semigroup from the class under consideration supports an ultimately complicated behavior called universal dynamics.

\end{abstract}

{\footnotesize{2010 \emph{Mathematics Subject Classification}:
 37C35;
 37D30,
 37E05,
 37H05,
 37D45.}

\emph{Key words and phrases}:
number of periodic points,
skew product,
partially hyperbolic systems,
interval random dynamics,
universal dynamics.}

\section{Introduction}\label{sec:intro}

One of the great mysteries of dynamical chaos is its extreme richness. Only in uniformly hyperbolic
systems the variability of chaotic dynamics can be controlled:
every basic set in Axiom A systems has a finite Markov partition \cite{Bow},
which implies that the chaotic behavior is self-similar in this case. However, once we leave uniformly hyperbolic systems,
the self-similarity gets broken. It is typical for non-hyperbolic chaotic systems that going to longer time scales and finer phase
space scales exhibits dynamics which is not present on the previous scales. Moreover, the diversity of dynamics that emerge in this process
can be unlimited. As typical manifestations of such complexity,
in this paper we investigate the phenomena of the
{\it fast (super-exponential) growth of the number of periodic points} and
the {\it universal dynamics} in a class of non-hyperbolic systems.

The super-exponential growth of periodic points does not seem to be natural. By
Artin and Mazur \cite{AM} a generic polynomial map has only exponential growth of the number of
periodic orbits as a function of period. The same holds true for hyperbolic systems: as we mentioned,
their dynamics can be described by means of a finite Markov partitions and, consequently, the number of periodic orbits can grow
at most exponentially. Notice also the result of
Martens, de Melo, and van Strien \cite{MMS} who implies
 theexponential growth of periodic points for $C^r$-endomorphisms
 ($r \geq 2$) of the unit interval with non-flat critical points.
 However, in other situations the super-exponential growth
 appears generically and is a characteristic feature of the wild behavior.
 The $C^r$-genericity of the super-exponential growth for systems
 in the Newhouse domain (the open region of diffeomorphisms
 with robust homoclinic tangencies \cite{Ne}) was discovered
 by Kaloshin \cite{K}. He noticed that the highly degenerate
 local bifurcations of the systems in the Newhouse domain,
 which were found in \cite{GTS92,GTS93},
 create large numbers of periodic orbits of the same period,
 and this, generically, leads to the super-exponential growth.
 By a similar strategy, the $C^1$-genericity of the super-exponential growth
 was shown in \cite{BDF} for systems with robust heterodimensional cycles.

Another important phenomenon observed in the non-hyperbolic setting is the generic occurrence of universal dynamics.
This phenomenon, discovered in \cite{GTS92,GTS93}, can be formally described
by the concept of a universal map which was introduced in \cite{BDwild,T03}.
A map is $(d,r)$-universal if the set of its iterations, each restricted to an appropriate region
in the phase space and written in certain rescaled coordinates, approximates all possible maps
from a unit ball to $\mathbb{R}^d$ with arbitrarily good precision
in the $C^r$-topology.

By definition, a single universal map mimics
all of the $d$-dimensional dynamics, i.e., it gives an example
of chaos of ultimate complexity.
The existence of universal maps may, at the first glance, seem improbable.
Indeed, if one considers one dimensional dynamics, it is an easy consequence of Belitsky-Mather theory \cite{Ma,Bel} that
there exists no $C^2$-universal diffeomorphisms of an interval, see \cite{Tu}.
However, when the dimension of the phase space is $2$ or higher,
under the presence of robust non-hyperbolicity,
universal maps do exist in abundance: they form a residual
subset of certain open regions in the space of dynamical systems.
Namely, $C^1$-universal maps are generic
in the Bonatti-D\'{i}az domain (the class of systems with
robustly non-dominated heterodimensional cycles, see
\cite{BDwild}), and $(d,r)$-universal
maps with arbitrarily large $r\geq 2$ are generic in the Newhouse domain
for $d=2$ \cite{T03,GTS7,Tu}.

One may wonder if there is another non-uniformly hyperbolic setting
which leads to the wild behavior as we saw above.
According to the famous conjecture of Palis,
the mechanism of non-hyperbolicity is ascribed to the existence of
homoclinic tangency or heterodimensional cycles
(see \cite{BDV, P00} for the precise statement and relevant discussions).
Thus it is interesting to ask if such degenerated behavior is observable
in systems exhibiting robust heterodimensional cycles.

By the pioneering work of Bonatti and D\'{i}az \cite{BDblender},
we certainly know that
the occurrence of heterodimensional cycles can be $C^1$-robust
(as a result, $C^r$-robust) through the presence of blenders (see \cite{BDV}).
In such situation, it is not hard to prove that $C^1$-generic dynamical
systems exhibits super-exponential growth of the number of
periodic points (see \cite{BDF}) and
has universal dynamics to the center direction \cite{BoBo}.
However, it is not obvious if such phenomena occur under
higher regularity settings.

In this article, aiming at the understanding of the behavior
in generic $C^r$-systems with robust heterodimensional cycles, where
$r \geq 2$,
we investigate {\it semigroup actions on the interval}.
As suggested in several papers (see for example
\cite{DGR,BB,GI}),
they serve as simplified models of the
systems with robust heterodimensional cycles and, more generally,
partially hyperbolic systems with one central direction.
We prove that under certain mild,
non-hyperbolicity conditions,
such systems do exhibit the wild behavior.

Let us briefly see the statement of our main result;
for basic definitions and precise statements, see Section~\ref{sec:mainre}.
We investigate semigroup actions on the interval $I=[0, 1]$
generated by three maps $f_i:I \to I$ $(i=0, 1, 2)$, which are
smooth and strictly increasing. We assume that $f_0$ contains a repeller-attractor heteroclinic connection.
We also assume that $(f_1, f_2)$ is a persistent blender
on an interval containing the repeller-attractor heteroclinic (see Section~\ref{sec:mainre} for the definitions).
The fact that $(f_1, f_2)$ is a blender means that the action of $(f_1,f_2)$ spreads points over the interval.
This implies that there are orbits starting near the attracting fixed point of $f_0$ and
ending near the repelling fixed point.  Thus there are transient
orbits going back and forward between the repelling point and the attracting
point, which supports the non-hyperbolic behavior.

Let us introduce the $C^r$-topology in the space of such semigroups.
Then, our result can be stated as follows:

\medskip

{\bf Main Result}:
{\it For each $r \geq 1$, there exists a non-empty, $C^r$-open region
$\cW^r$ in the space of semigroups satisfying above conditions,
in which $C^r$-generic elements exhibit
super-exponential growth of periodic points
(Theorem~\ref{thm:arbitrary growth}) and
have $C^r$-universal dynamics (Theorem~\ref{thm:universal}).}

\medskip

We also prove that the itineraries along which we observe super-exponential growth
are quite abundant (see Theorem~\ref{thm:growth along path}).

Local genericity of super-exponential growth of periodic points for $r=1$
is just an easy analog of a result in \cite{BDF}.
Our main contribution is the case $r \geq 2$.
One interesting feature of our result is that the dynamics we describe is controlled by derivatives of order higher than $1$.
Usually, the conditions on the generic dynamics are formulated in terms of
the first derivative only. It seems probable
that no conditions involving higher order derivatives restrict
the possible richness of chaotic dynamics in systems with homoclinic tangencies \cite{T96,GST8,T10,Tu,Gour}.
Surprisingly, as the results of the present paper suggest,
this ``the first derivative alone'' principle should
not be applicable to the description of dynamics near
semigroup actions and, accordingly, robust heterodimensional cycles.

The class $\cW^r$ is described 
in terms of the second order derivative if $r \geq 2$
and the third order derivative if $r \geq 3$ (see Section~{\ref{sec:T}}).
A simple speculation reveals the necessity of such conditions.
Consider the semigroups for which all the maps $f_i$ have strictly positive
second derivatives everywhere. Any composition of increasing convex functions is, obviously, convex.
Therefore, any composition of the maps $f_i$ cannot have more than 2 fixed points, i.e., it cannot approximate
a map with a higher number of fixed points, so any such semigroup can not have universal dynamics.
Neither can it have a super-exponential growth: a periodic orbit corresponds to a periodic itinerary;
if for for each such itinerary the corresponding period map cannot have more than 2 fixed points, then the number
of periodic orbits is not more than twice the number of the periodic itineraries, and the latter grows exponentially with period.
The same holds true if all the maps are concave. Similarly, if the Schwarzian derivative is negative for all the maps $f_i$ (or positive for all of 
them), then every composition of $f_i$ has negative (resp. positive) Schwarzian derivative too. This restricts
the number of periodic points for every given itinerary by $3$, so we do need some condition on the
third derivative, involving the Schwarzian derivative, in order to have universal dynamics and/or the superexponential growth.

As we have already mentioned, the semigroups which
we study here can serve as simplified models for the study
of systems with heterodimensional cycles in partially hyperbolic systems.
Therefore, our result suggests that the rate of the growth of the number
of periodic orbits for a $C^r$-generic system
having robust heterodimensional cycles
can be determined by different factors for
$r=1$, $r=2$ and $r\geq 3$.


Let us briefly explain the scheme of the proof of the Theorem.
First, we construct an $r$-flat
periodic point by an arbitrarily small perturbation of the maps
(see Section~\ref{sec:flat}).
A periodic point is $r$-flat
if it is neutral (i.e., the first derivative of the period map at
that point is equal to $1$) and the derivatives of orders from $2$ to $r$ vanish
at this point, i.e., the period map is given by $x\mapsto x+o(x^r)$.

The construction of such periodic points is done by induction in the order of flatness:
we show that if the semigroup with the persistent blender has a
sufficiently large number of $k$-flat periodic points with $k \geq 3$,
then a $(k+1)$-periodic point can be created by an arbitrarily small
$C^r$-perturbation of the system. Moreover, the existence of a persistent blender
allows to place this point within any given interval and also let its itinerary to follow
the itinerary of any given orbit as long as we want. The perturbation can be localized
in an arbitrarily small neighborhood of some finite set of points, so the process
can be repeated without destroying any finite number of the flat points created
at the previous steps.

A $k$-flat periodic point corresponds to a codimension-$k$ bifurcation.
A local unfolding of this bifurcation can only decrease the codimension.
However, the presence of homoclinic and heteroclinic points due to the blender
allows for creating bifurcations of codimension $(k+1)$. The fact that
homoclinic bifurcations can lead to an unbounded increase in the codimension
of accompanying bifurcations of periodic orbits was discovered in \cite{GTS92,GTS93}.
It was related to the presence of hidden bifurcation parameters
(moduli of conjugacy) at typical homoclinic bifurcations \cite{GTSt}.
The strategy of our proof here has the same flavor as the proof of a similar
result for systems with homoclinic tangencies \cite{GTS1,GTS7}.
Indeed, the main argument is based on the calculation of a superposition of
polynomial maps. However, the actual construction is quite different:
in the case of a homoclinic tangency the argument unfolds in a neighborhood
of a critical point, while the maps we consider here
are diffeomorphisms of an interval, which results in
a different algebraic structure.

As we have seen before,
the information on the signature of the second and Schwarzian
derivatives is important in our case.
For example, the induction cannot start at $k\leq 2$.
If we write the period map near a $k$-flat point as
$x\mapsto x+ a_{k+1} x^{k+1} + o(x^{k+1})$,
then the sign of $a_2$ for a $1$-flat point
is the sign of the second derivative;
the sign of $a_3$ for a $2$-flat point is the sign of the
Schwarzian derivative. Therefore, for a semigroup
whose generators have
second (resp. Schwarzian) derivatives with definite signature,
we see that we cannot produce 2- (resp. 3-) flat points.
In our proof, $2$-flat periodic points are created by a perturbation
over $1$-flat periodic points with different signs
of $a_2$. Such $1$-flat points are obtained by a perturbation of
a heteroclinic cycle which includes a pair of repeller-attractor
heteroclinic connections of different characteristics in terms of the second derivative; the existence of such heteroclinics is a part of 
the conditions that describe the set $\cW^2$.
Similarly, when the same heteroclinics carry also an opposite sign of
Schwarzian derivative (this condition will is assumed in the definition of $\cW^r$ with $r\geq 3$),
we are able to make the resulting $2$-flat points having opposite signs of $a_3$.
Then, a perturbation including such $2$-flat points creates a $3$-flat point.
After that the above described induction can start, as the sign of $a_{k+1}$ does not play a role if $k\geq 3$.
The difference between the case $k \leq 3$ and $k >3$ will be elucidated
through the proof of local algebraic lemmas given in Section~\ref{sec:cancel}.

Once the possibility to create an $r$-flat periodic point by an arbitrarily
small perturbation is established, the proof of the generic super-exponential growth
of periodic points is done by the Kaloshin argument (see Section~\ref{sec:flat}). The creation 
of universal dynamics out of the abundant $r$-flat points is less straightforward. The
idea of creating universal dynamics by perturbing
a flat periodic point can be traced back to Ruelle and Takens work \cite{RT}.
This task is not trivial in the $C^r$-topology setting with large $r$. While
it was solved in \cite{Tu} for maps of dimension $2$ and higher, the methods of \cite{Tu} are inapplicable in the one-dimensional setting.
Therefore, we derive the genericity of the $C^r$-universal maps in ${\cW}^r$ from the occurrence of
$r$-flat points by employing a completely different technique; we also make a substantial use
of the existence of a blender which gives us the freedom in choosing orbit itineraries (see Section~\ref{sec:universal}).

Let us discuss one technical matter. Even though we use localized (hence, non-analytic)
perturbations in our proof, it seems that the construction can be
modified (in the spirit of \cite{BT,GTS7}) in order to encompass
the analytic case, and we believe that semigroups that have $C^\omega$-universal semigroups
are generic in $\cW^\omega$.  However, we do not know whether
the generic super-exponential growth holds in the analytic case or not.
The class of semigroups we consider in this paper can, possibly, serve as the simplest non-trivial
class of systems for which this question can be investigated.

In conclusion, we also remark that the analysis of
semigroup actions presented in this paper has similar flavor
with the study of dynamics of cocycles. The reader will find similarities of
arguments among papers such as \cite{BDP,ABY,NP}.
Indeed, these two objects are tightly related: given a semigroup action, one
can construct a diffeomorphism cocycle
dynamics over shift spaces by taking a skew product.
Consequently, all the results we obtain immediately give us corresponding results for the
skew-product systems.

We also expect that the statement similar to our
theorems should hold true for partially hyperbolic diffeomorphisms
with heterodimensional cycles with one-dimensional central direction.
However, compared to the semigroups case, the holonomies
along the center foliation of a generic partially hyperbolic map have low
regularity, which prevents a direct transfer of the results
we obtain for one-dimensional semigroups
to multi-dimensional partially hyperbolic diffeomorphisms.
Meanwhile, we believe that the main techniques are transferrable,
and should be useful for the further work in this direction.

In the next section,
we start rigorous arguments:
we give basic definitions and precise statement
of our results. The organization of this paper is explained at the
end of Section~\ref{sec:mainre}.

\medskip
{\bf Acknowledgement}
KS and DT are grateful for the hospitality of Department of Mathematics of Kyoto University.
This paper is supported by GCOE program of Kyoto University, JSPS KAKENHI Grant-in-Aid for Young Scientists (A) (22684003),
Scientific Research (C) (26400085), and JSPS Fellows (26$\cdot$1121), by Grant No. 14-41-00044 of RSF (Russia), and by 
the Royal Society grant IE141468.

\section{Main results}\label{sec:mainre}
As explained in Section~\ref{sec:intro}, the aim of this paper is to show that
generic semigroup actions in a certain open subset of the space of actions has a wild behavior.
The first theorem (Theorem \ref{thm:arbitrary growth}) asserts that an arbitrarily fast growth of the number of attracting periodic points
is generic in $\cW^r$, an open region whose precise definition is explained below.
The second theorem (Theorem \ref{thm:universal}) asserts that a generic triple in $\cW^r$ generates `'universal dynamics''.
The last theorem (Theorem \ref{thm:growth along path}) asserts that under
certain additional conditions, the number of attracting periodic points grows, along a generic infinite word,
faster than any given function of the period for a generic triple in $\cW^r$. 

\subsection{Space of semigroup actions and its open subset $\cW^r$}
\label{sec:T}
In this subsection, we prepare basic terminologies used throughout the paper.

For a finite set $S$, we denote the set $\bigsqcup_{n \geq 0} S^n$
by $S^*$.
The set $S^*$ is called the set of {\it words} of alphabets $S$.
Under concatenation of words,
$S^*$ forms a semigroup.
For $1 \leq r \leq \infty$,
 let $\cE^r$ be the space of orientation-preserving $C^r$ embeddings
 from $[0,1]$ to $(0,1)$ endowed with $C^r$-topology.
By composition of maps, it is also a semigroup.
We write $\cA^r(S)$ for the set of families $(f_s)_{s \in S}$
 of maps in $\cE^r$ indexed by $S$.
This set is endowed with the product topology
 of the $C^r$-topology of $\cE^r$.
For a family $\rho=(f_s)_{s \in S} \in \cA^r(S)$
 and a word $\omega=s_n\dots s_1 \in S^*$,
 we define a map $\rho^\omega$ in $\cE^r$ by
 $\rho^\omega=f_{s_n} \circ \dots \circ f_{s_1}$.
Then, the map $\omega \mapsto \rho^\omega$ is a homomorphism
 between $S^*$ and $\cE^r$.
This homomorphism is called the {\it semigroup action} generated by
 $\rho=(f_s)_{s \in S}$.
It is easy to see that any homomorphism from $S^*$ to $\cE^r$
 is generated by a family in $\cA^r(S)$.

For $\rho \in \cA^r(S)$ and $x \in [0,1]$,
 let  $\cO_+(x,\rho)$ be the {\it forward orbit of $x$ under $\rho$}
 $\{\rho^\omega(p) \mid \omega \in S^*\}$.
We call an element $\rho \in \cA^r(S)$ a {\it blender}
 on a closed interval $J \subset (0,1)$
if the closure of $\cO_+(x,\rho)$ contains $J$ for any $x \in J$.
We say that a blender $\rho$ on $J$ is {\it $C^r$-persistent}
if any semigroup action which is $C^r$-close to $\rho$ is a blender on $J$.
It is known that a $C^r$-persistent blender exists.
For example, suppose that $(f_1,f_2) \in \cA^r(\{1,2\})$ satisfies
 that $f_1'<1$ and $f_2'<1$ on an closed interval $[a,b] \subset (0,1)$,
 $f_1(a)=a$, $f_2(b)=b$, and $f_1(b)>f_2(a)$.
 Then $(f_1,f_2)$ is a $C^r$-persistent blender for any closed interval
 $J \subset (a,b)$ (see \cite[Example 1]{Sh}).

Let $f$ be a map in $\cE^r$ with $1 \leq r \leq \infty$.
A pair $(p,q)$ of points in $[0,1]$ is a {\it repeller-attractor pair}
 if $p$ and $q$ are fixed points of $f$
 such that $f'(p)>1>f'(q)$ and $W^u(p) \cap W^s(q) \neq \emptyset$.
A point in $W^u(p) \cap W^s(q)$ is called a {\it heteroclinic point}
 of $(p,q)$.
We define two quantities $\tau_A(z_0,f), \tau_S(z_0,f) \in \{\pm 1,0\}$
 for a heteroclinic point $z_0$ as follows.
For a map $g \in \cE^r$ and $x \in [0,1]$, let
 $A(g)_x$ and $S(g)_x$  be the {\it non-linearity}
 and the {\it Schwarzian derivative} of $g$ at $x$,
 defined as follows:
\begin{align*}
 A(g)_x & = \frac{g''(x)}{g'(x)},&
 S(g)_x & = \frac{g'''(x)}{g'(x)} -\frac32\left(\frac{g''(x)}{g'(x)}\right)^2,
\end{align*}
 where $A(g)_x$ is defined only if $r \geq 2$
 and $S(g)_x$ is defined only if $r \geq 3$.
When $r \geq 2$, there exist normalized $C^r$-linearizations of $f$
 $\vphi:W^u(p) \ra \RR$ and $\psi:W^s(q) \ra \RR$ at $p$ and $q$, i.e.,
 orientation preserving diffeomorphisms satisfying
 $\vphi \circ f(x)=\lambda_p \vphi(x)$,
 $\psi \circ f(x)=\lambda_q \psi(x)$,
 and $\vphi'(p)=\phi'(q)=1$, where $\lambda_p=f'(p)$ and $\lambda_q=f'(q)$.
For a heteroclinic point $z_0 \in W^u(p) \cap W^s(q)$, set
\begin{align*}
 \tau_A(z_0,f) & =\sgn(A(\psi \circ \vphi^{-1})_{\vphi(z_0)}),\\
 \tau_S(z_0,f) & =\sgn(S(\psi \circ \vphi^{-1})_{\vphi(z_0)}),
\end{align*}
 where $\sgn:\RR \ra \{0, \pm 1\}$ is the sign function.
We call the pair $(\tau_A(z_0,f),\tau_S(z_0,f))$ {\it the sign} of $z_0$
if $f$ is of class $C^3$.
If $f$ is only $C^2$, then the sign of $z_0$ is just one number,
 $\tau_A(z_0,f)$.

The normalized $C^r$-linearizations $\vphi$ and $\psi$ are known to
exist uniquely if $r \geq 2$ (see \cite[Theorem 2]{S}). Therefore, the map $\psi \circ \vphi^{-1}$
is uniquely defined and is an invariant (a functional modulus) of the smooth conjugacy of maps of the interval \cite{Ma, Bel}.
Hence, $\tau_A(z_0,f)$ is well-defined if $r \geq 2$, and $\tau_S(z_0,f)$ is well-defined if $r \geq 3$. Moreover,
the sign of the heteroclinic point is invariant with respect to $C^r$-smooth coordinate transformations, and it is the same
for every point of the orbit of $z_0$ by $f$.

Let $\cW^1$ be the set of $\rho=(f_0,f_1,f_2) \in \cA^1(\{0,1,2\})$
 which satisfy the following conditions:
\begin{description}
 \item[Existence of Blender]
 $(f_1,f_2) \in \cA^1(\{1,2\})$ is a $C^1$-persistent blender on a closed interval $J \subset [0,1]$.
 \item[Non-hyperbolicity]
 $f_0$ admits a repeller-attractor pair $(p,q)$ in $\Int J$.
\end{description}
The set $\cW^1$ is an open subset of $\cA^1(\{0,1,2\})$.
Let $\cW^2$ be the set of $\rho=(f_0,f_1,f_2) \in \cW^1 \cap \cA^2(\{0,1,2\})$
 which satisfy the following condition:
\begin{description}
 \item[Sign condition I]
 $f_0$ admits repeller-attractor pairs $(p_1,q_1)$ and $(p_2,q_2)$ in $\Int J$
 (the case where $(p_1,q_1)=(p_2,q_2)$ is allowed)
 and there exist heteroclinic points $z_1 \in W^u(p_1) \cap W^s(q_1)$
  and $z_2 \in W^u(p_2) \cap W^s(q_2)$ for $f_0$
 such that $\tau_A(z_1,f_0) \cdot \tau_A(z_2,f_0)<0$.
\end{description}
Finally,
let $\cW^3$ be the set of $\rho=(f_0,f_1,f_2) \in \cW^2 \cap \cA^3(\{0,1,2\})$
which satisfy the following condition:
\begin{description}
 \item[Sign condition II]
 $f_0$ admits repeller-attractor pairs $(p_3,q_3)$ and $(p_4,q_4)$
 in $\Int J$ (the case where $(p_i,q_i)=(p_j,q_j)$
 for some $1 \leq i< j \leq 4$ is allowed)
 and there exist heteroclinic points $z_3 \in W^u(p_3) \cap W^s(q_3)$
  and $z_4 \in W^u(p_4) \cap W^s(q_4)$ for $f_0$
 such that $\tau_S(z_3,f_0) \cdot \tau_S(z_4,f_0)<0$.
\end{description}
Remark that $\cW^3 \subset \cW^2 \subset \cW^1$.
 We also define $\cW^r=\cW^3\cap \cA^r(\{0,1,2\})$ for $r\geq 4$
(i.e. we do not apply any other conditions except for smoothness if $r\geq 4$).
As we will see in Proposition {\ref{prop:sign}},
 if $\tau_A(z_0,f_0)\neq 0$,
 then it does not change at $C^2$-small perturbations,
 and if $\tau_S(z_0,f_0)\neq 0$,
 then it does not change at $C^3$-small perturbations.
Hence, $\cW^r$ is an open subset of $\cA^r(\{0,1,2\})$ for any $r \geq 1$.
In Section 9 we give simple sufficient criteria for the fulfillment
 of the sign conditions,
 which do not require the computation of
 the Belitsky-Mather invariant $\psi \circ \varphi^{-1}$.
These criteria are formulated in terms of the first derivatives only,
 so we can conclude that $\cW^1$ has a $C^1$ open subset
 where each $C^r$ semigroup belongs to $W^r$.

\subsection{Arbitrary growth of the number of periodic points}
\label{sec:result growth}
For a map $f \in \cE^r$, set
\[
 \Fix(f)  =\{x \in [0,1] \mid f(x)=x\}, \quad
 \Fix_a(f)   =\{x \in \Fix(f) \mid f'(x)<1\}.
 \]
The following is our first result.
\begin{thm}
\label{thm:arbitrary growth}
For any $1 \leq r \leq \infty$
and any sequence $\ub{a}=(a_n)_{n=1}^\infty$ of positive integers,
a generic (in the sense of Baire) element $\rho$ in $\cW^r$ satisfies 
\begin{equation*}
 \limsup_{n \ra \infty}
 \frac{\sum_{\omega \in \{1,2,3\}^n}\# \Fix_a(\rho^\omega)}{a_n}=\infty.
\end{equation*}
\end{thm}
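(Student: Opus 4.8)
The plan is to follow the Kaloshin strategy, reducing the statement to a ``creation of many periodic points'' lemma together with a standard Baire-category argument. The key intermediate fact, to be established in Section~\ref{sec:flat} (and which I would isolate as a lemma), is: for any $\rho_0\in\cW^r$, any $N\in\mathbb{N}$, any $\epsilon>0$, and any $n_0$, there is $\rho$ with $\|\rho-\rho_0\|_{C^r}<\epsilon$ such that for \emph{some} period $n\geq n_0$ the period-$n$ maps $\rho^\omega$, $\omega\in\{1,2,3\}^n$, have in total more than $N$ hyperbolic attracting fixed points, and moreover these fixed points persist under $C^r$-small perturbations of $\rho$ (they are hyperbolic, hence robust). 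The mechanism is the one sketched in the introduction: using the sign conditions defining $\cW^r$ one first produces, by an arbitrarily small perturbation localized near a finite set of orbit segments, an $r$-flat periodic point whose itinerary can be prescribed to follow any given word for as long as we like (this is where the persistent blender is used, to route orbits from near the attractor $q$ back to near the repeller $p$). Then one unfolds this codimension-$r$ degeneracy: a generic one-parameter perturbation of a map of the form $x\mapsto x+ax^{r+1}+o(x^{r+1})$ near an $r$-flat point splits it into $\sim r$ hyperbolic fixed points, roughly half of which are attracting; by first taking a high iterate (or equivalently embedding many disjoint copies of the flat point along a long periodic itinerary via the blender) one boosts this count past any prescribed $N$.

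Given this lemma, the theorem follows by a routine Baire argument, which I would carry out as follows. Fix the sequence $\underline{a}=(a_n)$. For each pair of positive integers $m,k$ define
\[
 \cU_{m,k}=\Bigl\{\rho\in\cW^r \ \Bigm|\ \exists\, n\geq k:\ \frac{\sum_{\omega\in\{1,2,3\}^n}\#\Fix_a(\rho^\omega)}{a_n}>m\Bigr\}.
\]
Each $\cU_{m,k}$ is open in $\cW^r$: if $\rho\in\cU_{m,k}$, the $>m\,a_n$ attracting fixed points witnessing membership are hyperbolic, hence each persists — together with its property of lying in $\Fix_a$ — under a $C^r$-small perturbation, and only finitely many words $\omega\in\{1,2,3\}^n$ are involved, so the inequality survives on a neighborhood. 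Each $\cU_{m,k}$ is dense in $\cW^r$: given $\rho_0\in\cW^r$ and $\epsilon>0$, apply the creation lemma with $N=m\cdot\max_{n\leq k}a_n$ rounded up appropriately and $n_0=k$ — actually one must be slightly careful, since the lemma produces many points at one specific period $n$, so I would run it with the target $N>m\,a_n$ for the particular $n$ it delivers; re-inspecting the construction, the period $n$ at which the flat point sits can itself be chosen as large as we wish (lengthen the itinerary through the blender), so we may demand $n\geq k$ and then, knowing $a_n$, choose the unfolding iterate large enough that the number of created attracting points exceeds $m\,a_n$. This produces $\rho\in\cU_{m,k}$ with $\|\rho-\rho_0\|_{C^r}<\epsilon$. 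By the Baire category theorem in the (completely metrizable) space $\cW^r$, the set $\bigcap_{m,k}\cU_{m,k}$ is residual, and every $\rho$ in it satisfies $\limsup_{n\to\infty} \bigl(\sum_{\omega\in\{1,2,3\}^n}\#\Fix_a(\rho^\omega)\bigr)/a_n=\infty$, which is exactly the assertion.

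The main obstacle is entirely inside the creation lemma, not in the category wrapper — specifically, the inductive construction of the $r$-flat periodic point. The sign of the leading coefficient $a_{k+1}$ in $x\mapsto x+a_{k+1}x^{k+1}+o(x^{k+1})$ is forced (to be the sign of the non-linearity for $k=1$, of the Schwarzian for $k=2$), so to pass from $k=1$ to $k=2$ and from $k=2$ to $k=3$ one must combine, via a perturbation through a heteroclinic cycle, two lower-order flat points carrying \emph{opposite} signs — which is precisely what Sign condition~I and Sign condition~II furnish, via heteroclinic points $z_1,z_2$ (resp. $z_3,z_4$) with $\tau_A$ (resp. $\tau_S$) of opposite sign. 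Showing that the superposition of the corresponding polynomial ``gluing'' maps genuinely cancels the obstructing coefficient while keeping the remaining perturbation small in $C^r$ is the delicate algebraic core (the ``local cancellation lemmas'' promised for Section~\ref{sec:cancel}); for $k\geq 3$ the sign is free and the induction proceeds as in the homoclinic-tangency case \cite{GTS1,GTS7}, so the genuinely new work is confined to getting the induction started at $k\leq 3$.
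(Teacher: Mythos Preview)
Your proposal follows the same Kaloshin-style strategy as the paper: a Baire argument built on open sets defined by a lower bound on the number of attracting fixed points, reduced to a creation lemma that produces, by an arbitrarily small perturbation, an $r$-flat periodic point of controllably long period (the paper's Proposition~\ref{prop:flat periodic points}). Your identification of the inductive construction of the flat point, and of the role of Sign conditions~I and~II in starting the induction at $k\le 3$, matches the paper.

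There is one genuine wobble. Your mechanism for converting an $r$-flat point into \emph{many} attracting fixed points is to unfold the germ $x\mapsto x+ax^{r+1}+o(x^{r+1})$, getting $\sim r$ points, and then ``boost'' by a high iterate or by embedding many copies. The high-iterate route does not work: the $M$-th iterate of an $r$-flat germ is again $r$-flat with leading term $\sim Max^{r+1}$, and unfolding it still yields only $\sim r$ fixed points. The ``many copies'' route could be made to work but would require synchronizing the periods and performing disjoint unfoldings, which you do not carry out. The paper avoids this entirely: once a single $r$-flat $0\gamma$-periodic point $\hat p$ with $\hat p\notin\Sigma^\gamma(\hat p)$ is produced, a further $C^r$-small perturbation supported near $\hat p$ makes $\rho^{0\gamma}$ equal to the identity on a small interval (Remark~\ref{rem:flat}), and then a $C^\infty$-small oscillation of the form $x\mapsto x+a\sin(kx)$ supported there (Remark~\ref{rem:perper}) creates as many attracting fixed points as desired for that single word. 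This also resolves the order-of-quantifiers issue you flagged: the period $n=|0\gamma|$ is fixed first (as large as you like), and then the oscillation delivers more than $n\,a_n$ attracting fixed points at that period. One minor omission: the flat-point construction is carried out for $\rho\in\cW^\infty$, so the paper first invokes the $C^r$-density of $\cW^\infty$ in $\cW^r$, which needs a short argument when $r\le 2$.
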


Thus, in $\cW^r$, semigroups which exhibit
arbitrarily fast growth of $\# \Fix_a(\rho^\omega)$ are quite abundant.
Meanwhile, notice that every semigroup action can be $C^r$-approximated
by the one generated by polynomial maps, and for these maps, by estimating the growth of
the degree, we can easily see that $\# \Fix_a(\rho^\omega)$ grows
at most at an exponential rate
(this is analogous to the theorem by Artin and Mazur \cite{AM}).


Theorem \ref{thm:arbitrary growth} shows an interesting contrast between
the average growth of the number of periodic points and the growth along almost every infinite word.
Let $\mu$ be the uniform distribution on $\{0,1,2\}$.
We denote the product probability on $\{0,1,2\}^\infty$ by $\mu^\infty$.
For $\ub{\omega}=\cdots s_2 s_1 \in \{0,1,2\}^\infty$
and $n \geq 1$, set $\ub{\omega}|_n=s_n\dots s_1$.
As we will see in Section \ref{sec:criterion}, the set
\begin{equation*}
 \cW^r_{\mathrm{att}}=\{\rho=(f_0,f_1,f_2) \in \cW^r \mid
 |f'_0|_\infty \cdot |f'_1|_\infty\cdot |f'_2|_\infty<1\}
\end{equation*}
 is non-empty, where $|h|_\infty=\sup_{x \in [0,1]}|h(x)|$.
By the law of large numbers,
\begin{equation*}
\lim_{n \ra \infty}\frac{\#\{m \in \{1,\dots,n\} \mid s_m=s\}}{n}
 =\frac{1}{3}
\end{equation*}
 for any $s \in \{0,1,2\}$
 and $\mu_\infty$-almost every $\ub{\omega} \in \{0,1,2\}^\infty$.
By using this, we can see that
\begin{equation*}
 \limsup_{n \ra \infty}|(\rho^{\ub{\omega}|_n})'|_{\infty}^{1/n}
 \leq (|f'_0|_\infty \cdot |f'_1|_\infty\cdot |f'_2|_\infty)^{1/3}<1
\end{equation*}
 holds for any $\rho \in \cW^r_{\mathrm{att}}$
 and $\mu_\infty$-almost every infinite word
 $\ub{\omega}\in \{0,1,2\}^\infty$.
Hence,
\begin{equation*}
 \lim_{n \ra \infty} \#\Fix(\rho^{\ub{\omega}|_n})=1
 \hspace{3mm} \text{for }\mu^\infty\text {-almost every } \ub{\omega}.
\end{equation*}
On the other hand,  Theorem \ref{thm:arbitrary growth}
 implies that semigroup actions $\rho$ satisfying
\begin{align*}
\lefteqn{ \limsup_{n \ra \infty}\frac{1}{a_n}\int_{\{0,1,2\}^\infty}
 \#\Fix_a(\rho^{\ub{\omega}|_n})
 d\mu^\infty(\ub{\omega})}\\
 & = \limsup_{n \ra \infty}\frac{1}{a_n}
\frac{\sum_{\omega \in \{0,1,2\}^n}\#\Fix_a(\rho^\omega)}{3^n}
 =\infty
\end{align*}
 is generic in $\cW^r_{\mathrm{att}}$
 for any sequence $\ub{a}=(a_n)_{n=1}^\infty$
 of positive integers (by applying Theorem~\ref{thm:arbitrary growth}
 replacing $(a_n)$ by $(3^na_n)$).
Therefore, the $\mu^\infty$-averaged growth
 of $\#\Fix_a(\rho^\omega)$
 and the growth of $\#\Fix_a(\rho^{\ub{\omega}|_n})$
 along $\mu^\infty$-almost every infinite words $\ub{\omega}$
 are completely different for generic $\rho$ in $\cW^r_{\mathrm{att}}$.

\subsection{Universal dynamics}
\label{sec:result universal}
For finite sets $S$ and $S'$, and
 families $\rho=(f_s)_{s \in S} \in \cA^r(S)$
 and $\rho'=(g_{s'})_{s' \in S'} \in \cA^r(S')$,
 we say that $\rho$ {\it realizes} $\rho'$
 if there exists a closed interval $I \subset [0,1]$,
 a diffeomorphism $\Phi:[0,1] \ra I$, and
 a family $(\omega_{s'})_{s' \in S'}$ of words in $S^*$ such that
 $g_{s'}=\Phi^{-1} \circ (\rho^{\omega_{s'}}|_I) \circ \Phi$ holds
 for any $s' \in S'$.
In other words,
 the semigroup actions generated by $\rho'=(g_{s'})_{s' \in S'}$
 and $(\rho^{\omega_{s'}}|_I)_{s' \in S'}$
 are conjugate by the diffeomorphism $\Phi$.
We say that $\rho \in \cA^r(S)$ generates
 a {\it universal semigroup}
 if for each finite set $S'$ there exists a dense subset ${\mathcal D}_{S'}$
 of $\cA^r(S')$ such that $\rho$ realizes any $\rho' \in {\mathcal D}_{S'}$.

The following is our second result.
\begin{thm}
\label{thm:universal}
For any $1 \leq r \leq \infty$,
a generic element in $\cW^r$ generates a universal semigroup.
\end{thm}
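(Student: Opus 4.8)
The plan is to reduce Theorem~\ref{thm:universal} to the creation of $r$-flat periodic points, just as in the proof of Theorem~\ref{thm:arbitrary growth}, and then to extract universal behavior from a single flat orbit by an arbitrarily small $C^r$-perturbation. First I would fix an enumeration of a countable dense family of finite alphabets $S'$ and, for each, a countable dense set of target families $\rho'=(g_{s'})_{s'\in S'}\in\cA^r(S')$ together with a countable basis of $C^r$-neighborhoods of each $g_{s'}$; by the Baire category theorem it suffices to show that, for each such target and each prescribed precision, the set of $\rho\in\cW^r$ that realize some $\rho''$ in the chosen neighborhood of $\rho'$ is open and dense. Openness is essentially built into the definition of ``realizes'' (the conjugacy is via a genuine $C^r$-diffeomorphism onto a closed subinterval, and composition and restriction are continuous operations), so the work is density.

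For density, I would proceed in three moves. \textbf{Step 1 (flat point).} Given $\rho\in\cW^r$ and $\varepsilon>0$, invoke the machinery of Section~\ref{sec:flat}: by an $\varepsilon$-small $C^r$-perturbation supported near a finite set of points we may assume $\rho$ has an $r$-flat periodic point $x_0$ with itinerary $\omega_0\in\{0,1,2\}^*$, so that the period map $g:=\rho^{\omega_0}$ satisfies $g(x)=x+o((x-x_0)^r)$ near $x_0$, and moreover --- using the persistent blender --- we are free to prescribe $x_0$ to lie in a tiny interval and to prepend/append to $\omega_0$ any words we like. \textbf{Step 2 (a rich renormalization chart).} The key point is that an $r$-flat fixed point is a codimension-$r$ germ: in a small interval $U=[x_0-\delta,x_0+\delta]$, after rescaling by an affine map $\Phi_\delta:[-1,1]\to U$ (and possibly composing $g$ with itself a bounded number of times), the rescaled map $\Phi_\delta^{-1}\circ g\circ \Phi_\delta$ is $C^r$-close to the identity, and --- this is where we must do real analysis --- an $O(\varepsilon)$-small $C^r$-perturbation of the generators, localized inside $U$ and its blender-images, changes the rescaled return map by an arbitrary polynomial vector field of degree $\le r$ in $(x-x_0)/\delta$. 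Concretely: the flatness means the ``obstruction'' to prescribing the first $r$ Taylor coefficients of $g$ at $x_0$ has been killed, so adding a perturbation of size $\varepsilon\delta^k$ to the $k$-th generator-derivative produces a controlled change of the $k$-th coefficient of the rescaled map, and the blender lets us realize such localized perturbations without disturbing previously built structure. \textbf{Step 3 (interpolation to the target).} Using Step 2, realize the target family $(g_{s'})$ by choosing, for each $s'$, a word $\omega_{s'}$ of the form $\omega_0^{N_{s'}}$ followed by a blender word that carries the rescaled coordinate chart onto the domain where $g_{s'}$ is defined; a $C^r$-dense set of maps $[-1,1]\to\mathbb{R}$ is hit because polynomials of degree $\le r$ are $C^r$-dense on a compact interval among maps with small $C^r$-norm, and by first shrinking $\delta$ we can assume the rescaled target has arbitrarily small $C^r$-norm. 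Iterating over the enumeration with nested perturbations of geometrically decreasing size (each supported away from the finitely many orbits already fixed, which the localization and the freedom in itineraries permit) yields a $C^r$-limit in $\cW^r$ that realizes a dense subset of every $\cA^r(S')$.

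The main obstacle --- and the place where the one-dimensional setting forces a different argument from \cite{Tu} --- is Step 2: showing that the $r$-flatness of $x_0$, combined only with $C^r$-small localized perturbations of diffeomorphisms of an interval, genuinely gives full control of all $r$ low-order coefficients of the rescaled return map simultaneously, uniformly as $\delta\to 0$. One has to track how a perturbation inserted at one point of the itinerary propagates through the remaining compositions (the chain rule for higher derivatives / Fa\`a di Bruno combinatorics), and check that the flatness exactly cancels the terms that would otherwise blow up under rescaling; this is the interval-diffeomorphism analogue of the ``superposition of polynomial maps'' computation referenced in the introduction, and it is where the algebraic lemmas of Section~\ref{sec:cancel} do the heavy lifting. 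A secondary subtlety is bookkeeping: arranging the countably many perturbations so that each leaves intact the finitely many realized target families from earlier stages --- this is handled by the localization of perturbations and the blender-provided freedom to route new constructions through disjoint regions of the interval.
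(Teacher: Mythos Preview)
Your Step~3 contains a fatal error: polynomials of degree $\le r$ form an $(r{+}1)$-dimensional affine space and therefore cannot be $C^r$-dense in any open subset of the infinite-dimensional space $\cE^r$. Controlling the first $r$ Taylor coefficients of the rescaled return map at a single point gives you access only to a finite-dimensional family of germs, which is nowhere near enough to approximate an arbitrary target $g_{s'}\in\cE^r$ in the $C^r$ topology on the whole interval. The accompanying claim ``by first shrinking $\delta$ we can assume the rescaled target has arbitrarily small $C^r$-norm'' is also wrong as stated: conjugating a fixed target by an affine chart leaves its first derivative unchanged, so a target with $g_{s'}'\not\equiv 1$ is never $C^1$-close to the identity after rescaling. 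In short, the rescaling/flat-point heuristic you sketch only controls jets, not maps, and that gap cannot be closed by the bookkeeping in Step~3.

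The paper's mechanism is completely different and bypasses this obstacle. It first produces, by Proposition~\ref{prop:flat periodic points}, not one but $4N$ \emph{$\infty$-flat} periodic points (working inside $\cW^\infty$, which is $C^r$-dense in $\cW^r$). Using Lemma~\ref{lemma:connecting inf} and Remark~\ref{rem:flat} these are upgraded to $2N$ words $\bar\omega_j$ whose return maps are \emph{exactly} the identity on a common small interval $I_0$ --- a genuinely global statement, not a jet condition. The key analytic input is then Lemma~\ref{lemma:composition}: every target $\theta_k$ factors as $G_k^1\circ H_k^1$ with $(G_k^t)$ and $(H_k^t)$ one-parameter groups. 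One inserts, at a single marked point of each orbit $\bar\omega_j$, a perturbation conjugate to the time-$1/m$ map $H_k^{1/m}$ or $G_k^{1/m}$; iterating the perturbed word $m$ times yields $G_k^1\circ H_k^1=\theta_k$ exactly on $I_0$, while the perturbation is $C^\infty$-small because $m$ is large. This ``flow decomposition plus time-$1/m$'' trick is what replaces the (false) polynomial density you invoke, and it has no analogue in your proposal.
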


\subsection{Wild behavior along generic infinite words}
Under an additional mild condition,
 the semigroup generated by a generic element of $\cW^r$ exhibits
 wild behavior along generic infinite words.
Let $\cW^r_\#$ be the set consisting of elements
 $\rho=(f_0,f_1,f_2)$ of $\cW^r$
 such that $\cO_+(x,\rho) \cap \Int J \neq \emptyset$
 for each $x \in [0,1]$, where $J$ is the interval on which
 $(f_1,f_2)$ is a persistent blender.
The set $\cW^r_\#$ is a non-empty open subset of $\cA^r(\{0,1,2\})$.
We furnish the product topology on $\{0,1,2\}^\infty$
induced by the discrete topology of the set $\{0,1,2\}$.
\begin{thm}
\label{thm:growth along path}
For any sequence $(a_n)_{n=1}^\infty$ of positive integers,
a generic $\rho \in \cW^r_\#$ satisfies
\begin{equation*}
 \limsup_{n \ra \infty}\frac{\#\Fix_a(\rho^{\ub{\omega}|_n})}{a_n}=\infty
\end{equation*}
for every generic infinite word $\ub{\omega} \in \{0,1,2\}^\infty$.
\end{thm}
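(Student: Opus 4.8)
The plan is to prove the stronger statement that the set
$$\cG=\Big\{(\rho,\ub{\omega})\in\cW^r_\#\times\{0,1,2\}^\infty\ :\ \limsup_{n\to\infty}\frac{\#\Fix_a(\rho^{\ub{\omega}|_n})}{a_n}=\infty\Big\}$$
is residual in the product space, and then to descend to the nested‑genericity form stated in the theorem by the Kuratowski–Ulam theorem. Write $\cG=\bigcap_{N,m\geq 1}U_{N,m}$ with $U_{N,m}=\{(\rho,\ub{\omega}):\exists\,n\geq m,\ \#\Fix_a(\rho^{\ub{\omega}|_n})\geq Na_n\}$. Each $U_{N,m}$ is open: a point of $\Fix_a(f)$ is a transverse fixed point (the graph of $f$ crosses the diagonal from above there), hence it persists and stays attracting under $C^r$‑perturbation, while $\ub{\omega}\mapsto\ub{\omega}|_n$ is locally constant and $\rho\mapsto\rho^{W}$ is $C^r$‑continuous for each word $W$. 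So everything reduces to density of $U_{N,m}$.

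For density, fix $(\rho_0,\ub{\omega}_0)$, a $C^r$‑neighbourhood $\cV\subseteq\cW^r_\#$ of $\rho_0$, and $k,N,m\geq 1$; let $\sigma=\ub{\omega}_0|_k$ be the prefix that must be preserved. Choose any $x_0\in[0,1]$. Since $\rho_0\in\cW^r_\#$, its defining property applied at the point $\rho_0^\sigma(x_0)$ yields a word $\eta$ with $\rho_0^{\eta\sigma}(x_0)\in\Int J$, and $|\eta|$ may be taken as large as we wish (once inside $J$ the blender supplies orbit segments of arbitrary length returning to $\Int J$), so that $|\eta\sigma|\geq m$. Now feed the orbit segment of $\rho_0$ along the word $\eta\sigma$, starting at $x_0$ and ending in $\Int J$, into the construction of Section~\ref{sec:flat}: an arbitrarily small $C^r$‑perturbation, localised away from any prescribed finite set, produces $\tilde\rho_1\in\cV$, a word $\omega=V\eta\sigma$ (the prefix $\eta\sigma$ survives because the construction only prescribes an initial itinerary and appends the blender word $V$ at the far end, and the persistent blender is still available since $\tilde\rho_1$ is $C^r$‑close to $\rho_0$), and an $r$‑flat fixed point $z_1$ of $g_1:=\tilde\rho_1^{\,\omega}$. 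Set $n_1:=|\omega|\geq m$.

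Next, split $z_1$ into many attracting fixed points of the composition map along the same word. By $r$‑flatness, on $[z_1-\delta,z_1+\delta]$ one has $|g_1(x)-x|\leq\varepsilon(\delta)\,\delta^{r}$ with $\varepsilon(\delta)\to 0$ as $\delta\to 0$, and $\|g_1-\mathrm{id}\|_{C^r}$ small there. Perturbing one generator at a single cleanly visited occurrence in $\omega$, in a tiny neighbourhood of the corresponding point of the $g_1$‑orbit of $z_1$, amounts to adding to $g_1$ near $z_1$ a localised $C^r$‑small function (the other maps in the composition are fixed diffeomorphisms with bounded derivatives). Taking this to be a bump train of height $h$ with about $2Na_{n_1}$ monotone pieces on $[z_1-\delta,z_1+\delta]$ costs of order $h\,(Na_{n_1}/\delta)^{r}$ in $C^r$‑norm (for $r=\infty$, in any fixed finite $C^{k}$‑norm that $\cV$ bounds, using that an $\infty$‑flat point is $k$‑flat), which can be kept smaller than the allowed size while still $h\gg\varepsilon(\delta)\delta^r$ once $\delta$ is small enough — note $n_1$, hence $a_{n_1}$, is already fixed. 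The perturbed composition map then has at least $Na_{n_1}$ transverse fixed points near $z_1$ with derivative $<1$. Calling the resulting triple $\tilde\rho$ (still in $\cV\subseteq\cW^r_\#$, as $\cW^r_\#$ is open and $\tilde\rho$ is $C^r$‑close to $\rho_0$) and letting $\ub{\omega}$ be any infinite word whose first $n_1$ symbols are those of $\omega$, we get $\ub{\omega}|_k=\sigma=\ub{\omega}_0|_k$ and $\#\Fix_a(\tilde\rho^{\ub{\omega}|_{n_1}})\geq Na_{n_1}$ with $n_1\geq m$; hence $(\tilde\rho,\ub{\omega})\in U_{N,m}$, proving density.

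Thus $\cG=\bigcap_{N,m}U_{N,m}$ is a dense $G_\delta$ in $\cW^r_\#\times\{0,1,2\}^\infty$. Since $\cW^r_\#$ (an open subset of the separable, completely metrizable space $\cA^r(\{0,1,2\})$) and $\{0,1,2\}^\infty$ are second‑countable Baire spaces and $\cG$ has the Baire property, the Kuratowski–Ulam theorem gives that $\{\rho:\{\ub{\omega}:(\rho,\ub{\omega})\in\cG\}\text{ is residual in }\{0,1,2\}^\infty\}$ is residual in $\cW^r_\#$, which is exactly the assertion of the theorem. The main obstacle is the density step, and inside it the one delicate point is keeping the prescribed prefix $\sigma$ inside the period word of the $r$‑flat point while still steering the orbit into the blender, where the construction of Section~\ref{sec:flat} does its work; this is precisely why the hypothesis $\rho\in\cW^r_\#$ (every forward orbit meets $\Int J$) is imposed here rather than the weaker $\rho\in\cW^r$ of Theorem~\ref{thm:arbitrary growth}.
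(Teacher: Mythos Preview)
Your overall architecture is sound and closely parallels the paper's: the paper proves a bespoke Baire-type lemma (Lemma~\ref{lemma:generic path}) that is essentially the Kuratowski--Ulam argument specialised to this situation, and then reduces everything to showing that $\bigcup_{\eta}\cW(\eta\omega_0)$ is dense in $\cW^r_\#$ for every finite prefix $\omega_0$. Your $U_{N,m}$ formulation and the appeal to Kuratowski--Ulam are equivalent reformulations of the same reduction.

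The substantive gap is in your density step. You write that one can ``feed the orbit segment of $\rho_0$ along the word $\eta\sigma$ \dots\ into the construction of Section~\ref{sec:flat}'' to obtain an $r$-flat periodic point whose period word has $\eta\sigma$ as a prefix, justifying this by saying ``the construction only prescribes an initial itinerary and appends the blender word $V$ at the far end''. That is not what Proposition~\ref{prop:flat periodic points} does: its inductive construction builds the periodic word from scratch around freely chosen base points and does not accept a prescribed initial segment. The paper closes the orbit with prescribed prefix by a two-step mechanism that you have not invoked: first apply Proposition~\ref{prop:flat periodic points} to manufacture an \emph{auxiliary} $r$-flat periodic point $p_*$ (anywhere in $J$), and then use Lemma~\ref{lemma:connecting 2} with $(p,p',\hat p)=(x_2,x_0,p_*)$ and $F=[\rho^{\omega_1\omega_0}]_{x_0}^{-1}$ to produce a word $\omega_2$ and a small perturbation making $x_0$ an $r$-flat $(\omega_2\omega_1\omega_0)$-periodic point. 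Without this connecting step, there is no reason the Section~\ref{sec:flat} machinery yields a word beginning with your $\eta\sigma$.

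There are also smaller points you gloss over that the paper handles explicitly: $x_0$ must be chosen $(f_1,f_2)$-generic (so that it can play the role of $p'$ in Lemma~\ref{lemma:connecting 2}) and away from $\cO_-(p_*,\rho)$; the perturbation creating many attractors is done at a point $y$ of the orbit that is visited exactly once (Remark~\ref{rmk:solo2}), which forces a cyclic shift of the period word that one then undoes at the end using that the generators are diffeomorphisms onto their images. None of these is fatal, but your sketch does not address them.
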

As we will see in Section {\ref{sec:criterion}},
 $\cW^r_\# \cap \cW^r_{\mathrm{att}}$ is non-empty.
For any $\rho \in \cW^r_\# \cap \cW^r_{\mathrm{att}}$
 and $\mu^\infty$-almost every $\ub{\omega}$,
 $\rho^{\ub{\omega}|_n}$ is a uniform contraction
 for any sufficiently large $n$.
This implies that the generic infinite words
in Theorem~\ref{thm:growth along path}
form a null subset of $\{0,1,2\}^\infty$ with respect to
the probability measure $\mu^\infty$.

\subsection{Organization of this paper}
The rest of this paper is organized as follows.
In Section~\ref{sec:cancel}, we prepare several local algebraic results
 about the composition of germs. In Section~\ref{sec:connecting},
 we prepare the notation for the perturbation of semigroups and
 give several lemmas, which produce orbits that realize desired germs.
In Section~\ref{sec:flat}, by using the techniques which we prepare
 in Sections~\ref{sec:cancel} and \ref{sec:connecting},
 we give the induction argument
 producing $r$-flat periodic orbits, and complete the proof
 of Theorem~\ref{thm:arbitrary growth}.
In Section~\ref{sec:universal},
 we prove Theorem~\ref{thm:universal}
 by using the construction of $r$-flat
 periodic orbits (which is already obtained in Section~\ref{sec:flat})
 together with a lemma about the decomposition of diffeomorphisms
 on the interval (Lemma~\ref{lemma:composition}).
In Section~\ref{sec:infinite path},
 we prove Theorem~\ref{thm:growth along path}.
The proof is done by a careful reiteration of the proof of
 Theorem~\ref{thm:arbitrary growth}
 together with a genericity argument (Lemma~\ref{lemma:generic path}).
In Section~\ref{sec:sign condition}, we prove that the sign
 condition which defines the set $\mathcal{W}^r$ is $C^r$-open.
Finally, in Section~\ref{sec:criterion},
 we give a simple sufficient condition for
 the fulfillment of the sign conditions.
As an application, we give a simple polynomial example
 for a semigroup in $\cW^\infty_\# \cap \cW^\infty_{\mathrm{att}}$.


\section{Cancellation of germs}\label{sec:cancel}

As explained in the introduction, for the proof of Theorem~\ref{thm:arbitrary growth} and Theorem~\ref{thm:universal},
we first produce $r$-flat periodic points by an arbitrarily small perturbation.
The construction of such periodic points will be done inductively.
In this section, we derive local algebraic propositions needed
 for the inductive step.
Namely, we show how to obtain an $(r+1)$-flat germ
 as a composition of iterations of $r$-flat germs.

Let $\cD^r$ be the set of germs
 of an orientation-preserving local $C^r$-diffeomorphisms of $\RR$
 with a fixed point at the origin.
We simply write $\cD$ for $\cD^{\infty}$.
For $F \in \cD$ and $s \geq 1$, we denote by $F^{(s)}$ (or $F'$ for $s=1$)
 the $s$-th derivative of $F$ at $0$.
We define a pseudo-distance $d$ on $\cD$ by
\begin{equation*}
d(F,G)=\sum_{r=1}^\infty 2^{-r}
 \left(\frac{|F^{(r)}-G^{(r)}|}{1+|F^{(r)}-G^{(r)}|}\right).
\end{equation*}
The pseudo-distance $d$ defines a topology on $\cD$.
This topology is non-Hausdorff.
Indeed, a germ $F$ with $F'=1$ and $F^{(r)}=0$ for all $r \geq 2$
 is not separated from the identity germ $I$.
We say that $F \in \cD$ is \emph{$r$-flat}
 if $F'=1$ and $F^{(s)}=0$ for all $s=2,\ldots,r$.
The term $\infty$-flat will mean $r$-flat for every $r \geq 1$.
For $F \in \cD$, let $A(F)$ and $S(F)$ be the non-linearity
 and the Schiwarzian derivative of $F$ at $0$ respectively.
The {\it sign} of a germ $F \in \cD^r$ is the pair $(\sgn(A(F)), \sgn(S(F)))$
We say that two germs $F,G \in \cD^r$  have the same or opposite signs
 if both $A(F) \cdot A(G)$ and $S(F) \cdot S(G)$
 are positive or, resp., negative.
For $f \in \Diff^r([0,1])$ and $x \in (0,1)$,
 we define a germ $[f]_x$ in $\cD^r$ by $[f]_x(y)=f(x+y)-f(x)$.

We start with recalling the fact (Lemma \ref{lemma:exp}) that
 any germ in $\cD$ equals to a time-one map of a local flow up to order $r$,
 for each fixed $r \geq 1$.
For $\alpha>0$, let $L_\alpha$ be the element of $\cD$ given by $L_\alpha(x)=\alpha x$.
\begin{lemma}
\label{lemma:exp}
For any $F \in \cD$ and $r \in [1, +\infty)$,
 there exists a continuous family of germs $(F^t)_{t \in \RR}$ in $\cD$
 such that $F^0$ is the identity map,
 $F^1=F+o(x^r)$, and $F^t \circ F^{t'}=F^{t+t'}+o(x^r)$
 for any $t,t' \in \RR$.
\end{lemma}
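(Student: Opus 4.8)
The plan is to realize $F$ as the time-one map of a vector field obtained from the formal logarithm of $F$, truncated at the appropriate order. Fix $r\in[1,+\infty)$ and write $F(x)=\sum_{k=1}^{N} c_k x^k + o(x^r)$ with $N=\lfloor r\rfloor$ and $c_1=F'>0$. First I would treat the hyperbolic case $c_1\neq 1$: then one can conjugate $F$ to its linear part $L_{c_1}$ up to order $r$ by a polynomial tangent-to-identity change of coordinates $h$ (solving the homological equations $h\circ F = L_{c_1}\circ h$ order by order, which is possible because $c_1^k - c_1 \neq 0$ for $k\geq 2$ when $c_1\neq1$); set $F^t = h^{-1}\circ L_{c_1^t}\circ h$, which is a continuous family with $F^0=I$, $F^1=F$ up to order $r$, and the flow property up to order $r$. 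The case $c_1=1$ (the one that matters for the flat-point construction) is the substantive one: here $F$ is tangent to the identity, $\log F$ has a formal expression as a series in $x$ starting at order $\geq 2$, and I would define the truncated vector field $X(x) = \left(\sum_{k=2}^{N} b_k x^k\right)\frac{d}{dx}$ whose coefficients $b_k$ are the unique polynomials in $c_2,\dots,c_k$ making the time-one flow $\Phi^1_X$ agree with $F$ modulo $o(x^r)$ — this is the standard fact that the time-one map is a bijection (up to any finite order) between such vector fields and tangent-to-identity germs. Then $F^t := \Phi^t_X$ is the desired family.

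The key steps, in order: (1) reduce to the two cases $c_1\neq1$ and $c_1=1$; (2) in the hyperbolic case, solve the finite system of homological equations to linearize up to order $r$, then pull back the linear flow $L_{\alpha^t}$; (3) in the parabolic case, construct the truncated generator $X$ by matching Taylor coefficients of $\Phi^1_X$ with those of $F$ recursively — at each order $k$, the coefficient of $x^k$ in $\Phi^1_X - F$ is ($b_k$ minus a polynomial in $b_2,\dots,b_{k-1},c_2,\dots,c_{k-1}$) plus $(c_k$-dependence), and one solves for $b_k$; (4) verify continuity in $t$ (immediate, since $\Phi^t_X$ and $\alpha^t$ depend continuously, indeed smoothly, on $t$) and verify $\Phi^t_X\circ\Phi^{t'}_X=\Phi^{t+t'}_X$ exactly for a genuine flow, hence certainly modulo $o(x^r)$; (5) handle the general case where $F$ need not be a polynomial germ by noting that $F$ agrees with its degree-$N$ Taylor polynomial modulo $o(x^r)$, so it suffices to build the family for that polynomial germ. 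One should also remark that although the construction of $F^t$ uses only the $r$-jet of $F$, the germs $F^t$ themselves can be taken to be (germs of) genuine $C^\infty$ diffeomorphisms — e.g. polynomial or real-analytic representatives — so that $F^t\in\cD$ as required.

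I expect the main obstacle to be bookkeeping rather than conceptual: checking that the recursion in step (3) is genuinely solvable at every order, i.e. that the coefficient of $b_k$ in the matching equation at order $x^k$ is nonzero (it equals $1$, coming from the linear term of the flow), and that the remaining terms involve only lower-order data already determined — this is where the special role of orders $\leq 3$ visible elsewhere in the paper does \emph{not} intrude, since here we only need existence of \emph{some} generating field, not control of signs. A secondary technical point is the uniform treatment of non-integer $r$: one simply uses $N=\lfloor r\rfloor$ throughout and notes $o(x^N)=o(x^r)$ is what is needed, so nothing changes. Finally, gluing the two cases is trivial since they are mutually exclusive, and in each the family $(F^t)$ is manifestly continuous in $t$ with $F^0=I$.
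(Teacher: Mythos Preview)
Your proposal is correct and follows essentially the same approach as the paper: split into the hyperbolic case $F'\neq 1$ (linearize and pull back $L_{\alpha^t}$) and the parabolic case $F'=1$ (take a truncated logarithm and use its flow). The only stylistic difference is that in the parabolic case the paper invokes the abstract fact that $\cD(1)/\cD(r)$ is a finite-dimensional connected nilpotent Lie group, for which the exponential map is surjective, whereas you carry out the same construction by hand via coefficient-matching for the generating vector field; the paper's subsequent Remark gives an explicit inductive construction very close in spirit to yours.
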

\begin{proof}
Recall that $F$ is orientation preserving. Put $\alpha=F'>0$.
If $\alpha \neq 1$, then $F$ is smoothly linearizable at $0$. This means
that there exists $\Phi \in \cD$ such that
 $\Phi \circ F \circ \Phi^{-1}(x)=\alpha x$.
In this case, the family
 $(\Phi^{-1} \circ L_{\alpha^t} \circ \Phi)_{t \in \RR}$ satisfies
 the required properties.

By $\cD(r)$, we denote the subgroup of $\cD$ consisting of $r$-flat elements,
 where the group operation is given by the composition of germs.
Suppose that $F'=1$. Then $F$ belongs to $\cD(1)$.
The group $\cD(1)/\cD(r)$ is a finite-dimensional connected
 nilpotent Lie group and it is well-known that
 the exponential map is surjective for such Lie groups.
Hence, there exists an element $\xi$ in the Lie algebra of $\cD(1)/\cD(r)$
 such that $\exp(\xi)=F+o(x^r)$.
Then the family $(\exp(t\xi))_{t \in \RR}$ satisfies the required properties.
\end{proof}

\begin{rmk}
There is an explicit inductive construction of the family
$(F_t)_{t \in \RR}$ for the case $F'=1$:
The constant family $(F_1^t\equiv I)_{t \in \RR}$ satisfies
the required condition for $r=1$.
Suppose that we have a family $(F_r^t)_{t \in \RR}$ which satisfies
the required condition for some $r \geq 1$.
Put $a=\{F^{(r+1)}(0)-F_r^{(r+1)}(0)\}/(r+1)!$
and let $(G^t)_{t \in \RR}$ be the germ of local flow generated
by the vector field $ax^{r+1}(\del/\del x)$ at $0$.
We set $F_{r+1}^t=F_r^t \circ G^t$.
Then, $F_{r+1}^1(x)=F_r^1(x)+ax^{r+1}+o(x^{r+1})=F(x)+o(x^{r+1})$.
Since the germ $G^t$ is $r$-flat,
 the maps $F_r^t$ and $G^t$ commutes up to order $x^{r+1}$
(see also Remark~\ref{rmk.tech}).
Hence, we have $F_{r+1}^{t+t'}=F_{r+1}^t \circ F_{r+1}^{t'}+o(x^{r+1})$.
Thus the family $(F_{r+1}^t)_{t \in \RR}$ satisfies the required condition
for $r+1$.
\end{rmk}

For $h \in \Diff^r([0,1])$ or $\Diff^r(\mathbb{R})$, the {\it support} of $h$,
denoted by $\mathrm{supp}\,h$, is the closure of $\{x \in [0,1] \mid h(x) \neq x\}$.
For a family of diffeomorphism $(h_t)$, its \emph{support} means the
closure of the union $\cup_t \mathrm{supp} \, h_t$.
%
%
The following lemma plays a key role in our construction of universal semigroups.
\begin{lemma}
\label{lemma:composition}
Let $r \in [2, +\infty]$. Let $I$ be a compact interval in $\RR$
and $F$ be an orientation-preserving $C^r$-diffeomorphism of $\RR$
such that $\supp(F) \subset I$.
Then, there exist one-parameter groups $(G^t)_{t \in \RR}$
and $(H^t)_{t \in \RR}$ of $C^r$-diffeomorphisms of $\RR$
and a compact interval $I'$ such that $F=G^1 \circ H^1$ on $I$
and the support of $(G^t)$ and $(H^t)$ are both contained in $I'$.
\end{lemma}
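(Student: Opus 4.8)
The plan is to realize $F$ as a product of two \emph{complete} vector-field flows (time-one maps) by writing $F$ as a composition of two diffeomorphisms each of which is the time-one map of a vector field that can be chosen with compact support. The key elementary fact is that if $K$ is a $C^r$-diffeomorphism of $\RR$ with $\supp(K)$ contained in a compact interval and $K$ has \emph{no fixed points} in the interior of that interval (so it moves every interior point strictly to one side), then $K$ embeds in a one-parameter group with support in a slightly larger compact interval: one simply takes the vector field $X = (\log K - \mathrm{id})$-style infinitesimal generator, constructed by choosing a fundamental domain and spreading the conjugacy; more concretely, on the open interval where $K$ acts without fixed points one builds $X$ so that its time-one flow is $K$ (this is the classical ``every fixed-point-free interval diffeomorphism is a flow'' construction), and $X$ extends by $0$ to all of $\RR$, staying $C^r$ because $F$ — hence $K$ — is $C^r$ and tangent to the identity to infinite order at the endpoints is \emph{not} needed: flatness of the generator at the boundary of its support is automatic from $K$ being the identity there together with the fundamental-domain construction. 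Thus the whole problem reduces to: decompose $F$ (on $I$) as $G \circ H$ where $G$ and $H$ are $C^r$-diffeomorphisms supported in a common compact interval $I'$, each of which is fixed-point-free on the interior of its own support.

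First I would fix $I = [a,b]$ and enlarge it to $I' = [a', b']$ with $a' < a$, $b' < b$ only slightly — wait, we need $I \subset \Int I'$, so take $a' < a < b < b'$. Pick an auxiliary point $c \in (a,b)$; actually the cleanest choice is to let $H$ be a diffeomorphism supported in $[a', b']$ that pushes everything strictly to the right (say $H(x) > x$ for $x \in (a', b')$) and agrees with $F$ near the right end — more precisely, arrange $H = F$ on a neighborhood of $b$ inside $I$, and $H$ fixed-point-free on $(a',b')$. Then $G := F \circ H^{-1}$ satisfies $\supp(G) \subset I'$ (since $F = \Id$ outside $I$ and $H = F$ near $b$, $G = \Id$ near $b$; outside $I'$ both are the identity), and $G = \Id$ near $b$ by construction. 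The remaining issue is that $G$ need not be fixed-point-free; this is where a second push enters: instead choose $H$ to dominate $F$ so strongly that $G = F \circ H^{-1}$ is itself a \emph{leftward} push without interior fixed points. Quantitatively: near the left end, $F$ is close to the identity, so we want $H^{-1} < F^{-1}$ wherever meaningful, i.e., $H > F$; choosing $H(x) = F(x) + \varepsilon\,\beta(x)$ for a bump function $\beta$ supported in $(a',b)$ with $\beta > 0$ on $(a',b)$ and $\varepsilon$ large enough, one gets $H(x) > F(x) \ge$ things ensuring $G(y) = F(H^{-1}(y)) < y$ on the relevant range, hence $G$ is fixed-point-free on $(a', b')$ too. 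One checks $H$ is still a diffeomorphism for suitable $\beta,\varepsilon$ (monotone perturbation) and that both $H$ and $G$ lie in $\Diff^r$ supported in $[a',b']$.

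With $G$ and $H$ each a $C^r$-diffeomorphism of $\RR$ supported in $I'$ and fixed-point-free on $\Int I'$, apply the flow-embedding construction separately to each: there are one-parameter groups $(G^t), (H^t)$ of $C^r$-diffeomorphisms with $G^1 = G$, $H^1 = H$, and $\supp(G^t), \supp(H^t) \subset I'$ for all $t$ (the support of the flow is again $I'$ since the generating vector fields vanish exactly outside $\Int I'$). Then $G^1 \circ H^1 = G \circ H = F \circ H^{-1} \circ H = F$ on $I$ — in fact everywhere — which is the claim. The main obstacle I expect is the flow-embedding step in exactly $C^r$ regularity with no loss: one must check that the infinitesimal generator of a fixed-point-free interval diffeomorphism $K$, built from a fundamental domain $[x_0, K(x_0)]$ by interpolating the conjugacies $K^n$, is genuinely $C^r$ up to and including the endpoints of $\supp K$, where it must glue $C^r$-smoothly with the zero vector field. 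This requires controlling the derivatives of $K^n$ as $n \to \pm\infty$ near the parabolic-like endpoints; the standard reference argument (e.g. via Szekeres/Sergeraert-type constructions, or simply noting that $K = \Id$ identically in a neighborhood of each endpoint — which we are free to arrange by taking the bumps $\beta$ to vanish identically near $a'$ and $b'$, not merely to order $r$) makes the gluing trivially $C^\infty$ at the endpoints and reduces everything to the interior flow-embedding on an open interval, which is classical. Care is needed only to keep $K \equiv \Id$ on genuine neighborhoods of $a'$ and $b'$ throughout the decomposition, which the explicit bump construction above permits.
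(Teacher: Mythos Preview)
Your decomposition strategy has a genuine gap at the flow-embedding step, and the fix you propose does not close it. For $r\ge 2$, a compactly supported $C^r$-diffeomorphism $K$ of $\RR$ that is fixed-point-free on the interior of its support need \emph{not} embed in a $C^r$ one-parameter group. The obstruction is classical: the endpoints of $\supp K$ are parabolic fixed points (since $K$ is $C^r$-tangent to the identity there), and the Szekeres vector fields determined by the germs of $K$ at the two endpoints are in general different on the interior. When they differ, $K$ admits no $C^1$ flow embedding on the closed interval; this is precisely the Mather-type invariant for $\Diff^r_c(\RR)$. Your proposed remedy---arranging $K\equiv\Id$ on genuine neighborhoods of the endpoints of $I'$---misidentifies where the difficulty lives: the regularity issue occurs at the boundary of $\supp K$, not at the boundary of $I'$, and by definition $K$ is \emph{not} the identity on any one-sided neighborhood of $\partial(\supp K)$ from the interior. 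So the ``gluing is trivially $C^\infty$'' claim is vacuous, and you are left with exactly the parabolic-endpoint problem you tried to avoid. (There is also a secondary issue: your candidate $H=F+\varepsilon\beta$ needs $\varepsilon$ large to force $G=F\circ H^{-1}$ below the identity, but large $\varepsilon$ can destroy monotonicity of $H$ since $\beta'$ changes sign.)

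The paper's proof sidesteps the parabolic obstruction entirely by making the relevant fixed point \emph{hyperbolic}. One multiplies by a small constant: for $0<\lambda<1$ small enough, $F_\lambda(x)=\lambda F(x)$ is a uniform contraction of $\RR$, hence has a single, exponentially attracting fixed point. Sternberg linearization then gives a global $C^r$ conjugacy $\varphi$ with $F_\lambda\circ\varphi(x)=\varphi(\mu x)$, so $F_\lambda$ is the time-one map of the $C^r$ flow $\bar H^t(x)=\varphi(\mu^t\varphi^{-1}(x))$, and the linear expansion $\bar G^t(x)=\lambda^{-t}x$ recovers $F=\bar G^1\circ\bar H^1$. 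Cutting off the generating vector fields outside a large compact $I'$ (chosen to contain the relevant orbit segments for $t\in[0,1]$) gives compactly supported one-parameter groups $(G^t),(H^t)$ with $G^1\circ H^1=F$ on $I$. The point is that hyperbolicity buys $C^r$ linearization for free, whereas parabolic endpoints do not.
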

\begin{proof}
Take $0<\lambda<1$ such that
the map $F_\lambda(x)=\lambda F(x)$ is a uniform contraction on $\RR$. The contraction property
implies that the map $F_\lambda$ has a unique fixed point $p_*$. This fixed point is exponentially stable.
It follows that $F_\lambda$ is $C^r$-linearizable on $\RR$.
More precisely, there exists a $C^r$-diffeomorphism $\varphi$ of $\RR$
and a constant $\mu>0$ such that $F_\lambda \circ \varphi(x)=\varphi(\mu x)$ for any $x \in \RR$.

Put $\bar{G}^t(x)=\lambda^{-t}x$  and $\bar{H}^t(x)=\varphi(\mu^t \cdot \varphi^{-1}(x))$
for $x \in \RR$. These are one-parameter groups of diffeomorphisms of $\RR$.
Take a compact interval $I'$ whose interior contains $\bar{G}^t \circ F_\lambda(I)$
and $\bar{H}^t(I)$ for any $t \in [0,1]$.
By cutting-off the vector fields
 generating $(\bar{G}^t)$ and $(\bar{H}^t)$ outside $I'$,
 we obtain one-parameter groups $(G^t)_{t \in \RR}$
 and $(H^t)_{t \in \RR}$ of diffeomorphisms of $\RR$
 such that $G^t(x)=\bar{G}^t(x)$ and $H^t(x)=\bar{H}^t(x)$
 for any $x \in I \cup F_\lambda(I)$ and $t \in [0,1]$,
 and the supports of $G^t$ and $H^t$ are both contained in $I'$.
By construction, $G^1 \circ H^1(x)=\lambda^{-1} \cdot F_\lambda(x)=F(x)$
 for any $x \in I$.
\end{proof}

In the following, we give several lemmas on the cancellation of germs.
Their proofs will be done by calculating compositions of polynomials.
In the proofs, we will exploit the following elementary observations.
\begin{rmk}\label{rmk.tech}
 1. Non-linearities and Schwarzian derivatives satisfy
 the following \it{cocycle properties}:
 for $F, G \in \mathcal{D}$, we have
\begin{equation*}
 A(F \circ G)=A(F) \cdot (G')+ A(G), \quad
 S(F \circ G)=S(F) \cdot (G')^2+ S(G).
\end{equation*}
In particular, if the germs $F$ and $G$ are $1$-flat, then
\begin{equation*}
 A(F \circ G)=A(F)+ A(G), \quad
 S(F \circ G)=S(F)+ S(G).
\end{equation*}
2. Suppose that $F \in \cD$ is $1$-flat and $G \in \cD$ satisfies
 $G(x)=x+cx^{r+1}+o(x^{r+1})$ ({\it i.e}, $G$ is $r$-flat).
Then an easy computation shows that
\begin{equation*}
F \circ G(x) \equiv G \circ F(x) \equiv F(x)+cx^{r+1}
 \mod o(x^r).
\end{equation*}
In particular,
 $(F\circ G)^{(r+1)} = (G\circ F)^{(r+1)}=F^{(r+1)}+G^{(r+1)}$.
\end{rmk}

For a germ $F \in \cD$ satisfying $A(F) \neq 0$, we put $(S/A)(F)=S(F)/A(F)$.

\begin{lemma}
\label{lemma:2-flat}
Let $F_1$ and $F_2$ be $1$-flat germs in $\cD$
 with the opposite signs and satisfying $|(S/A)(F_1)|>|(S/A)(F_2)|$.
Then, for any neighborhood $\cV$ of the identity germ in $\cD$
 and any $\alpha,\beta \in \RR$,
 there exist $1$-flat germ $H \in \cV$
 and $m,n \geq 1$ such that the following holds:
\begin{gather*}
A(F_2^n)+A((H \circ F_1)^m) + \alpha =0,\\
S(F_1) \cdot \left\{S(F_2^n)+S((H \circ F_1)^m) + \beta\right\}>0.
\end{gather*}
\end{lemma}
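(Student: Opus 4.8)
The plan is to build the $1$-flat germ $H$ as a small perturbation supported near $0$, and then to exploit the cocycle properties from Remark~\ref{rmk.tech}.1 for iterates. Since $F_1$ is $1$-flat, it is $k$-flat for some maximal $k$ with a first nonvanishing coefficient in degree $k+1$; actually what matters is only that $A(F_1)$, the coefficient of $x^2$ (up to a factor), is nonzero. Iterating: because $F_1$ is $1$-flat, $A(F_1^m)=m\,A(F_1)$ and $S(F_1^m)=m\,S(F_1)$, and similarly for $F_2$. So to leading order in the number of iterates, $A(F_2^n)$ grows linearly in $n$ with sign $\sgn(A(F_2))$, and $A(F_1^m)$ grows linearly in $m$ with the \emph{opposite} sign $\sgn(A(F_1))$. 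Given any target $\alpha\in\RR$, I would like to choose $m,n$ so that $n\,A(F_2)+m\,A(F_1)+\alpha$ is as close to $0$ as desired; the obstruction is that $m,n$ are integers, so the residual error is bounded only by roughly $\max(|A(F_1)|,|A(F_2)|)$, not zero. This is exactly where $H$ enters: $H$ will carry a small $x^2$-term (nonlinearity $A(H)$, which can be any small real by choosing $H$ appropriately inside $\cV$) so that, since $H\circ F_1$ is again $1$-flat with $A(H\circ F_1)=A(H)+A(F_1)$, iterating gives $A((H\circ F_1)^m)=m(A(H)+A(F_1))$. Tuning $A(H)$ continuously then kills the residual in the first equation exactly.

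So the first step is: for each large integer $n$, set $A(F_2^n)=n\,A(F_1)$ [sign opposite to $A(F_2)$ — wait, I mean $A(F_2^n)=n\,A(F_2)$], and then solve $m(A(H)+A(F_1)) = -\alpha - n\,A(F_2)$ for an integer $m\geq 1$ and a small real $A(H)$: pick $m$ to be the nearest integer to $(-\alpha-nA(F_2))/A(F_1)$ (which is positive and large once $n$ is large and $n A(F_2)$ has the sign making the quotient positive — here I use that $A(F_1)$ and $A(F_2)$ have opposite signs, so for the appropriate sign choice of $n$ the quotient is positive and $\to+\infty$), then define $A(H):=(-\alpha-nA(F_2))/m - A(F_1)$, which is $O(1/m)\to 0$. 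This makes the first displayed equation hold exactly.

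The second step handles the Schwarzian inequality, and this is where the hypotheses $|(S/A)(F_1)|>|(S/A)(F_2)|$ and ``opposite signs'' are used. Using the $1$-flat cocycle rule again, $S(F_2^n)+S((H\circ F_1)^m) = n\,S(F_2) + m\,(S(H)+S(F_1))$. Once $H$ has been chosen to fix the nonlinearity, I still have freedom in $S(H)$ (the $x^3$-coefficient of $H$) — I can take $S(H)$ to be any small real independently of $A(H)$, again staying in $\cV$. So the quantity $S(F_2^n)+S((H\circ F_1)^m)+\beta = n\,S(F_2)+m\,S(F_1)+\beta + m\,S(H)$ needs to be made to have the sign of $S(F_1)$. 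The point is to show that $n\,S(F_2)+m\,S(F_1)$ itself already has the sign of $S(F_1)$ with a margin growing like $n$ (so that the bounded terms $\beta$ and the tunable $m\,S(H)$, with $S(H)=O(1/m)$, do not spoil it). From step 1, $m\,A(F_1)\approx -\alpha-n\,A(F_2)$, so $m\approx -n\,A(F_2)/A(F_1)$ up to $O(1)$; substituting,
\[
n\,S(F_2)+m\,S(F_1)\approx n\left(S(F_2) - \frac{A(F_2)}{A(F_1)}S(F_1)\right)
= n\,A(F_2)\left(\frac{S(F_2)}{A(F_2)} - \frac{S(F_1)}{A(F_1)}\right).
\]
Because $|(S/A)(F_1)|>|(S/A)(F_2)|$ and $\sgn(A(F_1))=-\sgn(A(F_2))$, $\sgn(S(F_1))=-\sgn(S(F_2))$ (opposite signs), one checks that the bracket $(S/A)(F_2)-(S/A)(F_1)$ has sign $-\sgn(S(F_1)/A(F_1))$... more carefully: $(S/A)(F_1)$ and $(S/A)(F_2)$ have the \emph{same} sign (both signs flip), and the one with larger absolute value is $(S/A)(F_1)$, so $(S/A)(F_2)-(S/A)(F_1)$ has the sign \emph{opposite} to $(S/A)(F_1)$, i.e. opposite to $\sgn(S(F_1))/\sgn(A(F_1))$. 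Multiplying by $\sgn(A(F_2))=-\sgn(A(F_1))$ gives that $n\,A(F_2)\cdot(\text{bracket})$ has sign $\sgn(S(F_1))$. Hence $n\,S(F_2)+m\,S(F_1)$ has sign $\sgn(S(F_1))$ with magnitude $\asymp n$, dominating $|\beta| + |m\,S(H)| = O(1)$ for $n$ large; choose such an $n$, then the corresponding $m$, then $H$ with the prescribed small $A(H)$ and, say, $S(H)=0$. This is the main obstacle: getting the integer-rounding errors in $m$ under control while simultaneously satisfying one exact equation and one open inequality — resolved precisely by the interplay of the free parameters $A(H)$ (fixes the equation) and the linear growth in $n$ (absorbs everything else in the inequality). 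Finally, one must exhibit an explicit $1$-flat germ $H\in\cV$ with prescribed (small) values of $A(H)$ and $S(H)$ and higher derivatives arbitrary-but-small: take $H$ tangent to the identity to order $1$ with $H(x)=x+\tfrac12 A(H)x^2 + (\tfrac16 S(H)+\tfrac14 A(H)^2)x^3 + o(x^3)$ as a polynomial germ, which lies in any prescribed neighborhood $\cV$ of $I$ once $A(H),S(H)$ are small, completing the construction.
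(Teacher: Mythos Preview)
Your proposal is correct and follows essentially the same approach as the paper's proof. Both arguments use the cocycle identities $A(F_i^k)=kA(F_i)$, $S(F_i^k)=kS(F_i)$ for $1$-flat germs, choose large $m,n$ with $mA(F_1)+nA(F_2)$ bounded (the paper phrases this via sequences $(m_k,n_k)$ with $|m_kA(F_1)+n_kA(F_2)|<1$, you via rounding $(-\alpha-nA(F_2))/A(F_1)$), then tune a small $A(H)=O(1/m)$ to force the $A$-equation exactly while taking $S(H)=0$ (the paper uses $H_k(x)=x-c_kx^2+c_k^2x^3$, your polynomial germ with $S(H)=0$ is equivalent), and finally verify via the hypothesis $|(S/A)(F_1)|>|(S/A)(F_2)|$ that the leading term $nS(F_2)+mS(F_1)\sim m S(F_1)\bigl(1-\frac{A(F_1)S(F_2)}{A(F_2)S(F_1)}\bigr)$ has the sign of $S(F_1)$ and dominates $\beta$.
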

\begin{proof}
Since $A(F_1) \cdot A(F_2) <0$,
 there exist sequences $(m_k)_{k=1}^\infty$
 and $(n_k)_{k=1}^\infty$ such that
 $\lim_{k \ra \infty}m_k =+\infty$ and
 $|m_k A(F_1)+n_k A(F_2)|<1$ for any $k \geq 1$.
Put $c_k=(m_k A(F_1)+n_k A(F_2)+\alpha)/(2m_k)$
 and let $H_k$ be the germ in $\cD$ given by
 $H_k(x)=x-c_kx^2+c_k^2x^3$.
Then, $A(H_k)=-2c_k$, $S(H_k)=0$,
 and $H_k$ converges to the identity germ in $\cD$.
Notice that, by Remark~\ref{rmk.tech}, we have
\begin{equation*}
 A(F_2^{n_k})+A((H_k \circ F_1)^{m_k})
  = m_k\left(A(F_1)+A(H_k)\right)+n_k A(F_2)
 = - \alpha
\end{equation*}
 and
\begin{align*}
\frac{S(F_2^{n_k})+S((H_k \circ F_1)^{m_k})}{m_k S(F_1)}
 & = \frac{n_k S(F_2)}{m_k S(F_1)}+\left(1+\frac{S(H_k)}{S(F_1)}\right)\\
 & =  -\frac{A(F_1)+A(H_k)+(\alpha/m_k)}{A(F_2)}\frac{S(F_2)}{S(F_1)}
 +1\\
 & \xrightarrow{k \ra \infty}
  1-\frac{A(F_1) S(F_2)}{A(F_2)S(F_1)} .
\end{align*}
Since $|S(F_1)/A(F_1)|>|S(F_2)/A(F_2)|$, the last term is positive.
This implies that
\begin{equation*}
 \lim_{k \ra \infty}S(F_1)\cdot \{S(F_2^{n_k})+S((H \circ F_1)^{m_k})\}
 =+\infty.
\end{equation*}
This shows that 
$H_k$, $m_k$ and $n_k$ satisfy the desired properties for sufficiently large $k$.
\end{proof}

%
%

\begin{lemma}
\label{lemma:(s+1)-flat}
Suppose $r \geq 2$.
Let $F_1$ and $F_2$ be $r$-flat germs in $\cD$
 such that $F_1^{(r+1)} \cdot F_2^{(r+1)}<0$.
Then, for any neighborhood $\cV$ of the identity
 and $\alpha \in \RR$,
 there exist $H \in \cD^r$ and $m,n \geq 1$ such that
\begin{equation*}
 F_2^m \circ(H \circ F_1)^n(x)=x+\alpha x^{r+1}+o(x^{r+1}).
\end{equation*}
\end{lemma}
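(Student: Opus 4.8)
\textbf{Proof plan for Lemma~\ref{lemma:(s+1)-flat}.}
The plan is to mimic the structure of the proof of Lemma~\ref{lemma:2-flat}, but now working only with the single invariant $F^{(r+1)}$ (the coefficient of $x^{r+1}$) since, as the introduction explains, for $r\geq 2$ the sign of $F^{(r+1)}$ is no longer a conjugacy obstruction that we need to track carefully—only its value matters, and it behaves additively under composition of $r$-flat germs. Write $a_1 = F_1^{(r+1)}/(r+1)!$ and $a_2 = F_2^{(r+1)}/(r+1)!$, so $a_1 a_2 < 0$ and $F_i(x) = x + a_i x^{r+1} + o(x^{r+1})$. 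By Remark~\ref{rmk.tech}(2), composing $r$-flat germs simply adds their $(r+1)$-st derivatives (up to $o(x^{r+1})$), so $F_2^m$ has leading coefficient $m a_2$, and $(H\circ F_1)^n$ has leading coefficient $n(a_1 + h)$ where $h = H^{(r+1)}/(r+1)!$, provided $H$ is itself chosen $r$-flat so that it does not disturb the lower-order flatness.

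First I would record the target: we need
\begin{equation*}
 m a_2 + n(a_1 + h) = \alpha .
\end{equation*}
Since $a_1$ and $a_2$ have opposite signs, by a standard Dirichlet/Bezout-type argument (as in the opening of the proof of Lemma~\ref{lemma:2-flat}) there are sequences $m_k, n_k \to \infty$ with $|m_k a_2 + n_k a_1|$ bounded, say $<1$; then I would set
\begin{equation*}
 h_k = \frac{\alpha - m_k a_2 - n_k a_1}{n_k},
\end{equation*}
which tends to $0$ as $k\to\infty$. It remains to exhibit an $r$-flat germ $H_k \in \cD^r$ with $H_k^{(r+1)}/(r+1)! = h_k$ and with $H_k$ arbitrarily close to the identity in $\cD$: for instance take $H_k$ to be the time-one map of the vector field $h_k x^{r+1}\,\partial/\partial x$, or simply the polynomial germ $H_k(x) = x + h_k x^{r+1}$ truncated/corrected to be a genuine $C^r$ germ; since $h_k\to 0$ and all lower derivatives of $H_k$ match the identity, $H_k \to I$ in the pseudo-distance $d$, so $H_k \in \cV$ for large $k$. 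Then for $k$ large, $H = H_k$, $m = m_k$, $n = n_k$ satisfy
\begin{equation*}
 F_2^m\circ(H\circ F_1)^n(x) = x + \bigl(m_k a_2 + n_k(a_1+h_k)\bigr)(r+1)!\, x^{r+1} + o(x^{r+1}) = x + \alpha x^{r+1} + o(x^{r+1}),
\end{equation*}
which is exactly the claim. (Here I am using that $H\circ F_1$ is again $r$-flat because both factors are, and that $r$-flatness is preserved under composition and iteration, so all the intermediate germs genuinely lie in $\cD^r$ and the $o(x^{r+1})$ bookkeeping from Remark~\ref{rmk.tech}(2) is legitimate at each step.)

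The main obstacle I anticipate is purely bookkeeping rather than conceptual: one must make sure that composing $F_2^m$ with $(H\circ F_1)^n$ does not generate lower-order terms—this is guaranteed precisely because every germ involved is $r$-flat, so Remark~\ref{rmk.tech}(2) applies repeatedly and the only surviving term below the $o(x^{r+1})$ cutoff is the linear term $x$ plus the additive $x^{r+1}$ contributions. A minor point to check is that $n_k \neq 0$ (so that $h_k$ is well-defined) and that we may freely take $n_k\to\infty$ rather than $m_k$; this is immediate from the opposite-sign hypothesis $a_1 a_2<0$, which lets us approximate $-a_1/a_2$ from both sides by positive rationals $m/n$ with $n$ as large as we wish. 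No condition on the sign of $\alpha$ or on the relative sizes of $a_1,a_2$ is needed, which is the key qualitative difference from Lemma~\ref{lemma:2-flat} and reflects the fact that for $r\geq 3$ the induction in the order of flatness proceeds without further sign constraints.
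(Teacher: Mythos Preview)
Your proof is correct and follows essentially the same approach as the paper: set $a_i=F_i^{(r+1)}/(r+1)!$, use the opposite-sign hypothesis to find $m_k,n_k\to\infty$ with $|m_ka_2+n_ka_1|$ bounded, define $H_k(x)=x+h_kx^{r+1}$ with $h_k=(\alpha-m_ka_2-n_ka_1)/n_k\to 0$, and invoke the additivity of the $(r+1)$-st coefficient for $r$-flat germs. One slip: the factor $(r+1)!$ in your final displayed equation should not be there, since $a_i$ already absorbs it; once that is removed your computation agrees with the target $m_ka_2+n_k(a_1+h_k)=\alpha$ exactly as intended.
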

\begin{proof}
Proof is similar to Lemma \ref{lemma:2-flat}.
Put $\alpha_i=F_i^{(r+1)}/(r+1)!$ for $i=1, 2$.
Since $\alpha_1 \cdot \alpha_2<0$,
 there exists sequences $(m_k)_{k=1}^\infty$ and $(n_k)_{k=1}^\infty$
 such that $m_k \to \infty$ as $k\to \infty$ and $|m_k \alpha_2+n_k \alpha_1| < 1$ holds.
Put
\begin{equation*}
 c_k=\frac{m_k \alpha_2+n_k \alpha_1-\alpha}{n_k}
\end{equation*}
 and $H_k(x)=x-c_kx^{r+1}$.
Then, $H_k$ converges to the identity in $\cD$.
Since $r$-flat germs are commutative up to $(r+1)$-st order,
 we have
\begin{align*}
F_2^{m_k} \circ  (H_k \circ F_1)^{n_k}
 & = x+(m_k \alpha_2+n_k (\alpha_1-c_k))x^{r+1} +o(x^{r+1})\\
 & =x+\alpha x^{r+1}+o(x^{r+1}).
\end{align*}
This shows that $H_k$, $m_k$ and $n_k$
 satisfy the desired property if $k$ is sufficiently large. 
\end{proof}


\begin{lemma}
\label{lemma:s>3-flat}
Suppose $r \geq 3$.
Let $F_1,\dots,F_4$ be $r$-flat germs in $\cD$.
Then, for any neighborhood $\cV$ of the identity map in $\cD$
 and $\alpha \in \RR$,
 there exist $H_1,\dots,H_4 \in \cV$
 and $n \geq 1$ such that
\begin{equation*}
 (H_4 \circ F_4)^n
  \circ \dots \circ (H_1 \circ F_1)^n(x)
 =x+\alpha x^{r+1}+o(x^{r+1}).
\end{equation*}
\end{lemma}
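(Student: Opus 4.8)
The plan is to reduce this to Lemma~\ref{lemma:(s+1)-flat}, which already produces an $(r+1)$-flat germ with prescribed leading coefficient $\alpha x^{r+1}$ out of two $r$-flat germs \emph{with leading coefficients of opposite sign}. The obstacle here is that the four germs $F_1,\dots,F_4$ are arbitrary $r$-flat germs, with no sign hypothesis whatsoever on $F_i^{(r+1)}$; some of the $F_i^{(r+1)}$ may even vanish. So the first task is to manufacture, out of iterates of the $F_i$ composed with small germs $H_i$, two building blocks whose leading $(r+1)$-st coefficients have opposite signs, and then feed those into the previous lemma.

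The key observation is Remark~\ref{rmk.tech}(2): for $r$-flat germs, composition adds the $(r+1)$-st coefficients, and multiplying the germ by $n$ iterations multiplies that coefficient by $n$. Concretely, write $\alpha_i = F_i^{(r+1)}/(r+1)!$. Choose small germs $H_i(x) = x - c_i x^{r+1}$ (which lie in any prescribed $\cV$ once $|c_i|$ is small and which are themselves $r$-flat with leading coefficient $-c_i$); then $(H_i\circ F_i)^n$ has leading coefficient $n(\alpha_i - c_i)x^{r+1}$ up to $o(x^{r+1})$, and the whole composition $(H_4\circ F_4)^n\circ\cdots\circ(H_1\circ F_1)^n$ has leading coefficient $n\big(\sum_{i=1}^4(\alpha_i - c_i)\big)x^{r+1}$ modulo $o(x^{r+1})$, since $r$-flat germs commute up to order $r+1$. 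Therefore it suffices to pick $c_1,\dots,c_4$ with $\sum_i c_i$ as small as we like in absolute value (so each $H_i\in\cV$) and $n\geq 1$ so that
\begin{equation*}
 n\left(\sum_{i=1}^4(\alpha_i - c_i)\right) = \alpha.
\end{equation*}
There is a subtlety: if $\sum_i\alpha_i \neq 0$ we can simply take $c_1=\cdots=c_4 = 0$ and $n=1$ only when $\alpha$ happens to equal $\sum_i\alpha_i$; in general we need to adjust. The clean way is: fix a large $n$, then solve $\sum_i c_i = \sum_i \alpha_i - \alpha/n$; for $n$ large the right-hand side is within $o(1)$ of the fixed number $\sum_i\alpha_i$, so distributing it over the four $c_i$ (e.g. $c_i = \tfrac14(\sum_j\alpha_j - \alpha/n)$) keeps each $c_i$ bounded — but not necessarily small. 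To force the $H_i$ into the given neighborhood $\cV$ of the identity, I would instead absorb the bulk of the needed correction into the \emph{number} of iterations rather than the size of $c_i$: one cannot shrink a fixed $\sum\alpha_i$ by iterating, so the honest route is to first use Lemma~\ref{lemma:(s+1)-flat}-type cancellation. Namely, pair up $(H_1\circ F_1, H_2\circ F_2)$ with suitably chosen small $H_1, H_2$ so that the two resulting blocks, after raising to appropriate powers, have $(r+1)$-coefficients of opposite sign (this is possible because we may choose $c_1, c_2$ small of either sign, making $n_1(\alpha_1 - c_1)$ and $n_2(\alpha_2 - c_2)$ of opposite sign for appropriate $n_1, n_2$ — unless $\alpha_1 = c_1$ is forced, which we avoid by a generic small choice); similarly pair $F_3, F_4$. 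Then apply Lemma~\ref{lemma:(s+1)-flat} to the pair of opposite-sign $r$-flat germs $(H_1\circ F_1)^{n_1}$ and $(H_2\circ F_2)^{n_2}$ to obtain a composition equal to $x + \beta x^{r+1} + o(x^{r+1})$ for any prescribed $\beta$, and likewise for the other pair; composing the two and choosing the two prescribed leading coefficients to sum to $\alpha$ finishes the argument. One has to check that the word so produced has the form $(H_4\circ F_4)^n\circ\cdots\circ(H_1\circ F_1)^n$ with a \emph{common} exponent $n$; this is arranged by taking $n$ to be a common multiple of all the partial exponents and re-reading Lemma~\ref{lemma:(s+1)-flat}'s output, using again that iterating an already-$(r+1)$-flat germ just scales its leading coefficient, so powers can be freely rebalanced.

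I expect the main obstacle to be the bookkeeping that forces the $H_i$ to lie in the prescribed neighborhood $\cV$ \emph{while simultaneously} hitting exactly the target coefficient $\alpha$ with a single common exponent $n$ — in other words, disentangling "how small $H_i$ is" (controlled by $c_i$) from "how much cancellation we need" (controlled by $n$). The device that resolves it is precisely the one used in Lemmas~\ref{lemma:2-flat} and \ref{lemma:(s+1)-flat}: the cancellation of the bulk term is achieved by the choice of large iteration counts along a sequence $n_k\to\infty$, and the residual $O(1/n_k)$ discrepancy is absorbed into $c_i^{(k)}\to 0$, so the $H_i = H_i^{(k)}$ automatically enter $\cV$ for $k$ large. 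The case distinction $r\geq 3$ versus $r\le 3$ does not intervene here: once $r\geq 3$ every germ in sight is at least $r$-flat hence the leading-coefficient arithmetic is purely additive (Remark~\ref{rmk.tech}(2)), with no interference from $A$ or $S$, which is exactly why four arbitrary $r$-flat germs suffice with no sign hypothesis.
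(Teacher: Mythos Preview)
Your approach has a genuine gap. You commit to taking each $H_i(x)=x-c_ix^{r+1}$, i.e.\ $r$-flat perturbations. Since $r$-flat germs commute modulo $o(x^{r+1})$, the leading coefficient of the full composition is then exactly $n\sum_i(\alpha_i-c_i)$. To hit a prescribed $\alpha$ you need $\sum_i c_i=\sum_i\alpha_i-\alpha/n$, and when $\sum_i\alpha_i\neq 0$ this forces $\sum_i c_i$ to stay near a fixed nonzero number, so the $H_i$ cannot all lie in an arbitrary neighborhood $\cV$ of the identity. Your fallback via Lemma~\ref{lemma:(s+1)-flat} fails for the same reason: that lemma needs opposite signs of the $(r+1)$-st coefficients, but if (say) every $\alpha_i>0$ then $\alpha_i-c_i>0$ for all small $c_i$, and no amount of iterating changes the sign. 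The claim that ``the case distinction $r\geq 3$ does not intervene'' is precisely where the argument goes wrong.

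The missing idea is to use perturbations $H_i$ that are \emph{not} $r$-flat. The paper takes $H_2,H_4$ to be time-$\pm t$ maps of the flow of $x^2\frac{\partial}{\partial x}$ (only $1$-flat) and $H_1,H_3$ to be time-$\pm t$ maps of the flow of $\mu x^r\frac{\partial}{\partial x}$, so that the four $H_i$'s arrange into a commutator $G^t\circ H_\mu^t\circ G^{-t}\circ H_\mu^{-t}$. This commutator is $r$-flat with leading term $-(r-2)\mu t^2 x^{r+1}$; since the $F_i$ are $r$-flat they slide through everything and contribute only the additive $cn\,x^{r+1}$ with $c=\sum_i\alpha_i$. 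After $n$ iterations the commutator term scales like $\mu(nt)^2$, so choosing $t=t_n\to 0$ forces every $H_i\to\Id$ while $nt_n^2$ stays of order $1$, decoupling the size of $H_i$ from the size of the correction. The hypothesis $r\geq 3$ is exactly what makes $r-2\neq 0$ so that the commutator is nontrivial at order $r+1$.
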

\begin{proof}
Set $c=(\sum_{i=1}^4 F_i^{(s+1)})/(s+1)!$.
Let $(G^t)_{t \in \RR}$ and
 $(H_\mu^t)_{t \in \RR}$ be the germs of local flows generated
 by vector fields $x^2\frac{\del}{\del x}$
 and $\mu x^s\frac{\del }{\del x}$, respectively.
They satisfy the following:
\begin{align*}
G^t(x) &
 =\frac{x}{1-tx}=x+tx^2+\cdots+ t^r x^{r+1} +o(x^{r+1}),\\
H_\mu^t(x) &
 =x+\mu tx^s+o(x^{r+1}).
\end{align*}
Since
\begin{align*}
G^t \circ H_\mu^t(x) & =(x+\mu tx^r)+t(x+\mu t x^r)^2+\dots+o(x^{r+1})\\
 & =G^t(x)+\mu t x^r+ 2\mu t^2 x^{r+1}+o(x^{r+1}),\\
H_\mu^t \circ G^t(x) & =(x+\mu tx^r)+\mu t(x+t x^2+\dots)^r+o(x^{r+1})\\
 & =G^t(x)+\mu t x^r+ r\mu t^2 x^{r+1}+o(x^{r+1}),
\end{align*}
 we have
\begin{equation*}
 G^t \circ H_\mu^t (x)=H_\mu^t(x) \circ G^t(x)
 -(r-2)\mu t^2 x^{r+1} +o(x^{r+1}).
\end{equation*}
This implies that
\begin{equation*}
 G^t \circ H_\mu^t \circ G^{-t} \circ H_\mu^{-t}(x)
 =x-(r-2)\mu t^2 x^{r+1}+o(x^{r+1}).
\end{equation*}

Set $\mu_n=c-(\alpha/n)$ and $t_n=1/\sqrt{n(r-2)}$ for $n \geq 1$.
The germs $G^{t_n}$ and $H_{\mu_n}^{t_n}$ converge
 to the identity in $\cD$.
Since $r$-flat germs commute with any germs in $\cD$
 up to $(r+1)$-st order,
 we have
\begin{align*}
\lefteqn{ (G^{t_n} \circ F_4)^n \circ (H_{\mu_n}^{t_n} \circ F_3)^n
 \circ  (G^{-t_n} \circ F_2)^n \circ (H_{\mu_n}^{-t_n} \circ F_1)^n}\\
 & =  G^{n t_n} \circ H_{\mu_n}^{n t_n}
   \circ G^{-n t_n} \circ H_{\mu_n}^{-n t_n}(x)
  +cn x^{r+1} +o(x^{r+1})\\
 & = x+\left\{-(r-2)\mu_n (n t_n)^2+cn \right\}x^{r+1}+o(x^{r+1})\\
 & =x+ (c-\mu_n)n\cdot x^{r+1}+o(x^{r+1})\\
 & =x+ \alpha x^{r+1}+o(x^{r+1}).
\end{align*}
Thus, letting $n$ large,
$H_1 = H_{\mu_n}^{-t_n}$,
$H_2 = G^{-t_n}$,
$H_3 = H_{\mu_n}^{t_n}$ and
$H_4 = G^{t_n}$, we complete the proof.
\end{proof}

These lemmas allow us to construct flat germs.
For the practical use, we need to take their realizations
 as close to identity diffeomorphisms. The following statement
 shows that such realization are always possible
 (we omit the proof since it is well-known):
\begin{rmk}\label{rmk:realization}
For any neighborhood $\mathcal{N}$
 of the identity map in $\mathrm{Diff}^{\infty}([-1, 1])$,
 any $x \in (0,1)$,
 and any neighborhood $V \subset [-1, 1]$ of $x$,
 there exists a neighborhood $\mathcal{M}$
 of the identity germ in $\mathcal{D}$ such that
 for every $F \in \mathcal{M}$ there exists a diffeomorphism
 $\tilde{F} \in \mathcal{N}$ such that $\tilde{F}(x)=x$,
 $[\tilde{F}]_x=F$, and $\mathrm{supp} \tilde{F} \subset V$.
 \end{rmk}

%

\section{Connecting lemmas}\label{sec:connecting}
In this section,
 we show that, in the presence of a blender,
 we can create an orbit connecting any two prescribed points
 by a small perturbation.
We consider semigroup actions
 $\rho=(f_0,f_1,f_2) \in \cA^r(\{0,1,2\})$ (where $r \geq 1$)
which satisfy the following conditions:
\begin{enumerate}
\item $(f_1,f_2) \in \cA^r(\{1,2\})$
 is a blender on a closed interval $J \subset [0,1]$.
\item $\mathrm{Int}(J) \cap f_0(\mathrm{Int}(J))\neq \emptyset$.
\end{enumerate}
Notice that these conditions hold for $\rho \in \mathcal{W}^r$.

We prepare several definitions. For $x\in [0, 1]$, we put
\begin{align*}
\cO_-(x,(f_1,f_2)) & =\{y \in [0,1] \mid \rho^\omega(y)=x
 \text{ for some }\omega \in \{1,2\}^*\}.
\end{align*}
We say that a point $x \in J$ is {\it $(f_1,f_2)$-generic} if
 the closure of $\cO_-(x,(f_1,f_2))$ contains $J$.
An $(f_1,f_2)$-generic point is a generic point in the sense of Baire as well.
Indeed, the set
 $J(U)=J \cap \bigcup_{\omega \in \{1,2\}^*}\rho^\omega(U)$
 is an open and dense subset of $J$ for any non-emptyopen subset $U$ of $J$.
 Take a countable open basis $(U_n)_{n \geq 1}$ of $J$.
Since $(f_1,f_2)$ is a blender on $J$,
 every point in the residual subset $\bigcap_{n \geq 1}J(U_n)$ of $J$
 is $(f_1,f_2)$-generic.

For $h \in \Diff^r([0,1])$,
 we define an element $\rho_h$ of $\cA^r(\{0,1,2\})$ by
\begin{equation*}
\rho_h=(h \circ f_0,f_1,f_2).
\end{equation*}
For a point $x \in [0,1]$ and a word $\omega = s_n\cdots s_1 \in \{0, 1, 2\}^{\ast}$, set
\begin{equation*}
\Sigma_h^\omega(x)=\{\rho_h^{s_k\dots s_1}(x)
 \mid s_k = 0, \, k=1,\dots,n \}.
\end{equation*}
When $h$ is the identity map, we simply write $\rho^\omega$ and $\Sigma^\omega$ for $\rho_h^\omega$ and $\Sigma_h^\omega$.
If $h \in \Diff^r([0,1])$ satisfies $\supp(h) \cap \Sigma^\omega(x) =\emptyset$, then
$[\rho_{h}^{\omega}]_x =[\rho^{\omega}]_x$, hence $[\rho_{h}^{0 \omega}]_x  = [h]_{\rho^{0\omega}(x)} \circ [\rho^{0 \omega}]_x$
(remember that by $[\, \cdot \, ]_x$ we denote the germ of a diffeomorphism at $x$, see Section~\ref{sec:cancel}).
For $\omega \in \{0,1,2\}^*$, a point $x \in [0,1]$ is {\it $\omega$-periodic} for $\rho$ if $\rho^\omega(x)=x$.
For $r \in [1, +\infty]$, we say that a $\omega$-periodic point $x$ is {\it $r$-flat} if the germ $[\rho^\omega]_x$ is $r$-flat.
For a word $\omega \in S^*$ with an alphabet $S$, we denote the length of $\omega$ by $|\omega|$,
i.e. $|\omega|=n$ if $\omega=s_n\dots s_1$. For $1\leq k \leq n$, we set $\omega|_{k} = s_k\cdots s_1$.

\begin{lemma}
\label{lemma:connecting 1}
Let $p$ and $p'$ be points in $J$ such that $p'$ is $(f_1,f_2)$-generic.
For any neighborhood $\cN \subset \Diff^\infty([0,1])$ of the identity map, any non-empty open subset $U$ of $J \cap f_0(J)$
and any $l \geq 1$, there exist $h \in \cN$ and $\omega \in \{0,1,2\}^*$ such that $\supp(h) \cup \Sigma_h^\omega(p) \subset U$,
$\rho_h^\omega(p)=p'$, and $|\omega| \geq l$.
\end{lemma}
\begin{proof}
Since $p'$ is $(f_1,f_2)$-generic,
 $\cO_-(p',\rho)$ intersects with $U \subset J \cap f_0(J)$.
Take $\eta \in \{1,2\}^*$ such that
 $|\eta| \geq l$ and $q=(\rho^{\eta})^{-1}(p')$ belongs to $U$.
Since $(f_1,f_2)$ is a blender on $J$,
 the point $f_0^{-1}(q)$ is contained in the closure of $\cO_+(p,\rho)$.
Hence, there exist $h \in \cN$ and $\eta' \in \{1,2\}^*$
 such that $\supp(h) \subset U$ and $h \circ f_0 \circ \rho^{\eta'}(p)=q$.
Put $\omega=\eta 0 \eta'$.
 Then, $\rho_h^\omega(p)=p'$, $\Sigma_h^\omega(p) =\{q\} \subset U$,
 and $|\omega| \geq l$.
\end{proof}

\begin{rmk}\label{rmk:solo}
Notice that in the word $\omega$ obtained in the above proof the letter $0$ appears only once.
\end{rmk}

The next lemma shows that
 when there is an $r$-flat periodic point somewhere in $J$,
 then the connecting orbit can be constructed
 in such a way that the germ $[\rho_h^\omega]_{p}$ will coincide with
 any prescribed one up to order $r$. In particular,
 we can construct the connecting orbit
 for which the corresponding germ will be $r$-flat.

\begin{lemma}
\label{lemma:connecting 2}
Let $p$ and $p'$ be points in $J$ such that $p'$ is $(f_1,f_2)$-generic.
 Suppose that there exist an $(f_1,f_2)$-generic point $\hat{p} \in J$,
 a word $\gamma \in \{0,1,2\}^*$, and $r_0 \in [1, r]$
 such that $\hat{p}$ is an $r_0$-flat $0\gamma$-periodic point of $\rho$
 and $\hat{p} \not\in \Sigma^\gamma(\hat{p})$.
Then,
 for any neighborhood $\cN \subset \Diff^\infty([0,1])$ of the identity map,
 any neighborhood $V$ of $\hat{p}$,
 any non-empty open subset $U$ of $J \cap f(J)$,
 any germ $F \in \cD$ and any $l \geq 1$,
 there exist $\omega \in \{0,1,2\}^*$ and $h \in \cN$
 such that $|\omega| \geq l$, $\supp(h) \subset U \cup V$,
 $\rho_h^\omega(p)=p'$, $[\rho_h^\omega(t)]_p=F(t)+o(t^{r_0})$,
 and $\Sigma_h^\omega(p) \subset \Sigma^\gamma(\hat{p}) \cup U \cup V$.
\end{lemma}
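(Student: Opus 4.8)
\emph{Strategy.} The plan is to drive an orbit from $p$ to the $r_0$-flat periodic point $\hat p$, loop around $\hat p$ a large number $N$ of times so that these loops synthesize a prescribed germ up to order $r_0$, and finally drive the orbit on to $p'$; the perturbation $h$ will be a union of three pieces with pairwise disjoint supports, only one of which sits near $\hat p$. First I would shrink $V$ so that $\cl V$ misses the finite set $\Sigma^\gamma(\hat p)$ (possible since $\hat p \notin \Sigma^\gamma(\hat p)$) and $U \setminus \cl V \neq \emptyset$, and then pick two disjoint non-empty open subsets $U_1, U_2$ of $(U\setminus \cl V)\setminus \Sigma^\gamma(\hat p)$. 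Applying Lemma~\ref{lemma:connecting 1} to the pair $(p,\hat p)$ with target set $U_1$, and then to $(\hat p, p')$ with target set $U_2$ (both $\hat p$ and $p'$ being $(f_1,f_2)$-generic), I get words $\omega_1,\omega_2$, each containing the letter $0$ exactly once (Remark~\ref{rmk:solo}), and $h_1, h_3 \in \cN$ with supports and $0$-position sets inside $U_1$ resp.\ $U_2$, and $\rho_{h_1}^{\omega_1}(p) = \hat p$, $\rho_{h_3}^{\omega_2}(\hat p) = p'$. Set $G_1 := [\rho_{h_1}^{\omega_1}]_p$ and $G_3 := [\rho_{h_3}^{\omega_2}]_{\hat p}$, germs of $C^r$-diffeomorphisms fixing the origin.

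\emph{Synthesizing the germ.} Choose a polynomial germ $\widehat F \in \cD$ with $G_3 \circ \widehat F \circ G_1 \equiv F \pmod{o(t^{r_0})}$, i.e.\ with the appropriate $r_0$-jet (its linear part, $(G_3'(0))^{-1}F'(0)(G_1'(0))^{-1}$, is positive, so $\widehat F$ is a genuine diffeomorphism germ). By Lemma~\ref{lemma:exp} take a continuous family $(\widehat F^{\,t})_{t\in\RR}$ in $\cD$ with $\widehat F^{\,0}=\Id$, $\widehat F^{\,1}\equiv \widehat F\pmod{o(t^{r_0})}$ and $\widehat F^{\,t}\circ \widehat F^{\,t'} \equiv \widehat F^{\,t+t'}\pmod{o(t^{r_0})}$, so that $\widehat F^{\,1/N}\to \Id$ in $\cD$. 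For $N$ large, Remark~\ref{rmk:realization} gives $\tilde h_N \in \cN$ with $\tilde h_N(\hat p)=\hat p$, $[\tilde h_N]_{\hat p}=\widehat F^{\,1/N}$, $\supp \tilde h_N \subset V$. Since $\supp\tilde h_N$ misses $\Sigma^\gamma(\hat p)$ and $\hat p$ is $0\gamma$-periodic for $\rho$, the $0\gamma$-orbit of $\hat p$ is unchanged for $\rho_{\tilde h_N}$ (so $\hat p$ stays $0\gamma$-periodic), and by the observation preceding Lemma~\ref{lemma:connecting 1}, $[\rho_{\tilde h_N}^{0\gamma}]_{\hat p}=\widehat F^{\,1/N}\circ G_0$, where $G_0 := [\rho^{0\gamma}]_{\hat p}$ is $r_0$-flat, i.e.\ $G_0 \equiv \Id\pmod{o(t^{r_0})}$. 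As inserting an $r_0$-flat germ does not change a germ modulo $o(t^{r_0})$ (even after $N$ compositions), I obtain
\begin{equation*}
[\rho_{\tilde h_N}^{(0\gamma)^N}]_{\hat p}=\big(\widehat F^{\,1/N}\circ G_0\big)^N \equiv \big(\widehat F^{\,1/N}\big)^N \equiv \widehat F^{\,1} \equiv \widehat F \pmod{o(t^{r_0})}.
\end{equation*}

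\emph{Assembly and the main obstacle.} I would set $\omega := \omega_2\,(0\gamma)^N\,\omega_1$ with $N\geq l$, and let $h$ be the diffeomorphism equal to $h_1$ on $U_1$, to $\tilde h_N$ on $V$, to $h_3$ on $U_2$, and to the identity elsewhere; by disjointness of the supports $h$ is well defined and lies in $\cN$ once the three pieces are taken small enough. The $0$-positions along the $\rho_h$-orbit of $p$ are the points of $\Sigma_{h_1}^{\omega_1}(p)\subset U_1$, then (repeated $N$ times) $\hat p$ and the points of $\Sigma^\gamma(\hat p)$, then the points of $\Sigma_{h_3}^{\omega_2}(\hat p)\subset U_2$; since $h=\Id$ near $\Sigma^\gamma(\hat p)$ while the two ``outer'' pieces of $h$ vanish near $V$, a routine check shows this orbit decomposes as $p\mapsto\hat p$ (by $\rho_{h_1}^{\omega_1}$), then $\hat p\mapsto\hat p$ (by $\rho_{\tilde h_N}^{(0\gamma)^N}$), then $\hat p\mapsto p'$ (by $\rho_{h_3}^{\omega_2}$). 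Hence $\rho_h^\omega(p)=p'$, $\Sigma_h^\omega(p)\subset \Sigma^\gamma(\hat p)\cup U\cup V$, $\supp h \subset U\cup V$, $|\omega|\geq l$, and $[\rho_h^\omega]_p = G_3\circ[\rho_{\tilde h_N}^{(0\gamma)^N}]_{\hat p}\circ G_1 \equiv G_3\circ \widehat F\circ G_1 \equiv F\pmod{o(t^{r_0})}$, as required. The main obstacle is the middle step: proving that $N$ loops around the $r_0$-flat orbit, each carrying the tiny germ $\widehat F^{\,1/N}$, accumulate to exactly $\widehat F$ modulo $o(t^{r_0})$ — this is where $r_0$-flatness of the period map $G_0$ (so the carrier never disturbs the $r_0$-jet) and the one-parameter family from Lemma~\ref{lemma:exp} enter essentially. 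The remaining, routine difficulty is the bookkeeping that keeps the three pieces of $h$ from interfering, which is made possible by the single-$0$ property of Lemma~\ref{lemma:connecting 1}, the hypothesis $\hat p\notin\Sigma^\gamma(\hat p)$, and the fact that $h$ is felt only at the $0$-positions of $\omega$.
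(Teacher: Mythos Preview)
Your proposal is correct and follows essentially the same approach as the paper: connect $p$ to $\hat p$ and $\hat p$ to $p'$ via Lemma~\ref{lemma:connecting 1} with perturbations supported in disjoint pieces of $U$, then use Lemma~\ref{lemma:exp} to split the required intermediate germ $G_3^{-1}\circ F\circ G_1^{-1}$ into an $N$-fold product realized by a small perturbation near $\hat p$, and loop $N$ times through the $r_0$-flat orbit $0\gamma$. The paper assembles $h$ as the composition $\varphi^{1/N}\circ \bar h$ rather than a piecewise definition, but with disjoint supports this is the same map; all other steps match.
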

\begin{proof}
Without loss of generality, we may assume that the sets $U$, $V$,
 and $\Sigma^\gamma(\hat{p})$ are mutually disjoint.
Applying Lemma~\ref{lemma:connecting 1}
 for pairs $(p,\hat{p})$, $(\hat{p},p')$ and the open set $U$,
 we obtain $\omega_1, \omega_2 \in \{1,2\}^*$ and $\bar{h} \in \cN$
 such that $|\omega_1| \geq l$, $\supp(\bar{h}) \subset U$,
 $\rho_{\bar{h}}^{\omega_1}(p)=\hat{p}$,
 $\rho_{\bar{h}}^{\omega_2}(\hat{p})=p'$, and
 $\Sigma_{\bar{h}}^{\omega_1}(p) \cup \Sigma_{\bar{h}}^{\omega_2}(\hat{p})
 \subset U$.
More precisely, we first take $U_1, U_2 \subset U$ satisfying
 $U_1 \cap U_2 = \emptyset$.
Then we apply Lemma~\ref{lemma:connecting 1} for $(p, \hat{p})$ and $U_1$,
 and $(\hat{p}, p')$ and $U_2$ to obtain two diffeomorphisms
 $h_1$ and $h_2$ respectively.
Then, since they have disjoint support, their composition
 $\bar{h} = h_1 \circ h_2$ gives us the desired $\bar{h}$.

Put $F_1=[\rho_{\bar{h}}^{\omega_1}]_{p}$
 and $F_2=[\rho_{\bar{h}}^{\omega_2}]_{\hat{p}}$.
By Lemma \ref{lemma:exp},
 there exists a one-parameter family of germs $(\phi^t)_{t \in \RR}$ in $\cD$
 (which is a one-parameter group up to order $r_0$)
 such that
\begin{equation*}
\phi^1(x)=(F_2)^{-1} \circ F \circ F_1^{-1}(x)+o(x^{r_0}). 
\end{equation*}
Notice that, by Remark~\ref{rmk:realization},
for each $\phi^{1/N}$,
we can choose a diffeomorphism $\varphi^{1/N} :[0, 1]\to [0, 1]$, such that
$\varphi^{1/N}(\hat{p})=\hat{p}$ and
 $[\varphi^{1/N}]_{\hat{p}}=\phi^{1/N}$.
 Furthermore, by choosing $N$ sufficiently large,
we can assume that
 $h=\varphi^{1/N} \circ \bar{h}$ is contained in $\cN$,
 being the support of $\varphi^{1/N}$ arbitrarily
 close to the point $\{\hat{p}\}$.

Then, the support of $h$ is contained in $U \cup V$,
 and hence, it does not intersect $\Sigma^\gamma(\hat{p})$.
Put $\omega=\omega_2(0\gamma)^N\omega_1$.
Since $\hat{p}$ is an $r_0$-flat $0 \gamma$-periodic point,
 we have
\begin{align*}
 [\rho_h^{\omega}(x)]_p
 & = F_2 \circ
 ([\varphi^{1/N}]_{\hat{p}} \circ [\rho^\gamma]_{\hat{p}})^N \circ F_1(x)\\
 & = F_2 \circ [\varphi^1]_{\hat{p}} \circ F_1(x) +o(x^{r_0})\\
 & = F(x)+o(x^{r_0}).
\end{align*}
We can also see that $|\omega| \geq l$ and
 $\Sigma_h^{\omega}(p)
 =\Sigma^\gamma(\hat{p}) \cup \{\hat{p}\} \cup \Sigma_{h_1}^{\omega_1}(p)
 \cup \Sigma_{h_2}^{\omega_2}(\hat{p})$.
The latter implies that
 $\Sigma_h^{\omega}(p) \subset \Sigma^\gamma(\hat{p}) \cup U \cup V$.
\end{proof}
\begin{rmk}\label{rmk:solo2}
In this lemma, because of Remark~\ref{rmk:solo}, we can assume that there exists a point
$y \in \Sigma_h^{\omega}(p)\cap U$ and a unique integer $k \geq 1$ such that
$\rho^{\omega|_k}_h(p) =y$ and $\omega_k=0$,
where $\omega= \omega_{|\omega|}\cdots \omega_1$.
Indeed, the point in $U_1 \cap \Sigma_h^{\omega}(p)$ is such a point.
Roughly speaking, $y$ is a point which appears in $\Sigma_h^{\omega}(p)$
only once.
\end{rmk}

Below we will use the following perturbation result whose proof we omit.
\begin{rmk}\label{rem:flat}
Let $f \in \mathrm{Diff}^r([0, 1])$ where $r \in [1, +\infty]$.
If $f$ has an $r$-flat fixed point $x$, then $C^r$-arbitrarily close to
the identity there exists
$g\in \mathrm{Diff}^{\infty}([0, 1])$
whose support is contained in an arbitrarily small neighborhood of
$\{x\}$ such that $g \circ f$ coincides with the identity map
near $x$.
\end{rmk}
For the case $r= \infty$, we have the following
\begin{lemma}
\label{lemma:connecting inf}
Under the assumptions of Lemma~\ref{lemma:connecting 2},
we furthermore assume that $\rho \in \mathcal{A}^{\infty}(\{0, 1,2  \})$
and $\hat{p}$ is $\infty$-flat $0\gamma$-periodic point of $\rho$.
Then, for any neighborhood $\cN \subset \Diff^\infty([0,1])$ of
the identity map,
any neighborhood $V$ of $\hat{p}$,
any non-empty open subset $U$ of $J \cap f(J)$,
there exist $\omega \in \{0,1,2\}^*$ and $h \in \cN$
such that $\supp(h) \subset U \cup V$,
 $\rho_h^\omega(p)=p'$, $[\rho_h^\omega(t)]_p= [t]$,
and
$\Sigma_h^\omega(p) \subset \Sigma^\gamma(\hat{p}) \cup V \cup U$.
\end{lemma}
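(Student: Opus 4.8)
The plan is to follow the proof of Lemma~\ref{lemma:connecting 2}, but to replace the order-by-order adjustment based on Lemma~\ref{lemma:exp} --- which only ever pins down finitely many derivatives --- by an argument that fixes the germ on the nose. Two features of the $C^{\infty}$ setting make this possible: the one-step return germ $\Psi_0:=[\rho^{0\gamma}]_{\hat p}$ is flat to the identity, since $\hat p$ is $\infty$-flat; and the ``entrance/exit'' germs of the connecting orbit can be kept hyperbolic, hence exactly $C^{\infty}$-linearizable and possessing exact composition-roots.

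First I would connect $p$ to $\hat p$ and $\hat p$ to $p'$ by words $\omega_1,\omega_2\in\{1,2\}^{*}$, using only the blender generators (this is possible because $p,\hat p,p'\in J$ and $\hat p,p'$ are $(f_1,f_2)$-generic, as in the proof of Lemma~\ref{lemma:connecting 1}), together with perturbations $h_1,h_2$ supported in $U$. The resulting germs $F_1:=[\rho^{\omega_1}_{h_1}]_p$ and $F_2:=[\rho^{\omega_2}_{h_2}]_{\hat p}$ have derivatives at their base points for which we may assume $F_1'(0)F_2'(0)\ne 1$ (for the standard blender one even gets $F_1'(0),F_2'(0)<1$; in general extend $\omega_1$ through the blender, the generators being embeddings of $[0,1]$ into $(0,1)$). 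Then $K:=F_2^{-1}\circ F_1^{-1}\in\cD$ has $K'(0)\ne 1$, so it is $C^{\infty}$-linearizable, $K=\theta^{-1}\circ L_{\nu}\circ\theta$ with $\nu=K'(0)$ and $\theta\in\cD$; for $N\ge 1$ the germ $R_N:=\theta^{-1}\circ L_{\nu^{1/N}}\circ\theta$ satisfies $R_N^{\circ N}=K$ and $R_N\to\Id$ in $\cD$ (indeed in $C^{\infty}$ on a fixed neighbourhood of $0$) as $N\to\infty$.

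Next I would install $R_N$ as the new return germ along $0\gamma$. Since $\Psi_0$ is flat to the identity, the germ $R_N\circ\Psi_0^{-1}$ has the same $\infty$-jet as $R_N$, hence a jet close to that of $\Id$ for $N$ large; realizing it exactly --- its jet-part $R_N$ by Remark~\ref{rmk:realization}, its flat-to-identity factor $\Psi_0^{-1}$ by a diffeomorphism that becomes $C^{\infty}$-close to $\Id$ once its (small) support is shrunk, which is the mechanism behind Remark~\ref{rem:flat} --- I obtain $h_0\in\mathrm{Diff}^{\infty}([0,1])$ with $h_0(\hat p)=\hat p$, with support in a prescribed small neighbourhood of $\hat p$ inside $V$, with $[h_0]_{\hat p}=R_N\circ\Psi_0^{-1}$, and with $h_0$ as $C^{\infty}$-close to $\Id$ as desired. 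Then $[\rho^{0\gamma}_{h_0}]_{\hat p}=[h_0]_{\hat p}\circ\Psi_0=R_N$. Taking $h:=h_2\circ h_0\circ h_1$ (pairwise disjoint supports, disjoint also from the relevant orbit segments --- here I use $\hat p\notin\Sigma^{\gamma}(\hat p)$ exactly as in Lemma~\ref{lemma:connecting 2}) and $\omega:=\omega_2(0\gamma)^{N}\omega_1$, the $(0\gamma)$-orbit of $\hat p$ under $\rho_h$ coincides with the one under $\rho$ except for the last, $h_0$-corrected step, so it returns to $\hat p$ and $[\rho^{(0\gamma)^{N}}_h]_{\hat p}=R_N^{\circ N}=K$; hence
\[
 [\rho^{\omega}_h]_p = F_2\circ K\circ F_1 = F_2\circ F_2^{-1}\circ F_1^{-1}\circ F_1 = \Id,
\]
which is the claimed identity germ $[t]$. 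The assertions $\rho^{\omega}_h(p)=p'$, $\supp h\subset U\cup V$ and $\Sigma^{\omega}_h(p)\subset\Sigma^{\gamma}(\hat p)\cup V\cup U$ then follow from the construction as in Lemma~\ref{lemma:connecting 2} (the $0$'s of $\omega$ are the leading ones of the blocks $0\gamma$, landing at $\hat p\in V$, and those inside $\gamma$, landing in $\Sigma^{\gamma}(\hat p)$).

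The crux, and the main obstacle, is obtaining $[\rho^{\omega}_h]_p$ \emph{equal} to the identity rather than merely $\infty$-tangent to it: this is precisely why one must use exact linearization and exact composition-roots in place of the formal one-parameter families of Lemma~\ref{lemma:exp}, and why the hyperbolicity of $F_1,F_2$ is needed (if $F_1'(0)F_2'(0)=1$, then $K$ need not admit an exact root tending to the identity). The accompanying technical delicacy is that $h_0$ must be simultaneously $C^{\infty}$-small and supported in an arbitrarily small neighbourhood of $\hat p$, even though the germ $R_N\circ\Psi_0^{-1}$ it realizes is not close to the identity germ (only its $\infty$-jet is); this is the flat-function phenomenon underlying Remark~\ref{rem:flat}.
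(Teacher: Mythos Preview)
Your overall strategy is sound and genuinely different from the paper's, but there is a slip that has to be repaired before it runs. You take $\omega_1,\omega_2\in\{1,2\}^{*}$ ``together with perturbations $h_1,h_2$ supported in $U$''. In the scheme $\rho_h=(h\circ f_0,f_1,f_2)$ the map $h$ acts \emph{only} through the $f_0$-component; hence for a word containing no $0$ one has $\rho_{h_i}^{\omega_i}=\rho^{\omega_i}$, the perturbations $h_i$ do nothing, and you cannot hit $\hat p$ (or $p'$) exactly, let alone adjust $F_1'(0)$. The fix is to follow Lemma~\ref{lemma:connecting 1} literally: each $\omega_i$ has the form $\eta_i\,0\,\eta_i'$ with $\eta_i,\eta_i'\in\{1,2\}^{*}$ and a single $0$-step landing in some $U_i\subset U$ (with $U_1,U_2$ disjoint). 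Then $h_i$ supported in $U_i$ genuinely acts, one can also tweak $(h_i)'$ there to ensure $F_1'(0)F_2'(0)\neq 1$, and the rest of your argument (Sternberg linearisation of the hyperbolic $K=F_2^{-1}\circ F_1^{-1}$, exact roots $R_N\to\Id$, installation at $\hat p$ via $[h_0]_{\hat p}=R_N\circ\Psi_0^{-1}$ using Remark~\ref{rem:flat}) goes through unchanged; your bookkeeping of $\Sigma_h^{\omega}(p)$ merely acquires two extra points in $U$.

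The paper's own proof is quite different. It uses that a $C^\infty$-neighbourhood $\cN$ of the identity contains $\cN^s\cap\Diff^\infty$ for some finite $s$, applies Lemma~\ref{lemma:connecting 2} with $r_0=s$ and $F=\Id$ to make $[\rho_h^\omega]_p=t+o(t^s)$, and then kills the residual $s$-flat discrepancy by a further $C^s$-small perturbation $\tilde h$ near $p'$, which is transported back to a $0$-landing point in $U$ via the ``solo point'' of Remark~\ref{rmk:solo2}. No linearisation or composition-roots are needed. Your route instead exploits the algebraic fact that hyperbolic germs have exact $C^\infty$ roots tending to the identity, so the identity germ is achieved on the nose and the correction sits at $\hat p$ rather than at a solo point; the price is the mild (arrangeable) non-resonance hypothesis $F_1'(0)F_2'(0)\neq 1$.
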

\begin{proof}
For a given  $\cN \subset \Diff^\infty([0,1])$, since the
$C^{\infty}$-topology coincides with the projective limit of
$C^r$-topology, there exists $s \geq 1$ and
$\mathcal{N}^{s} \subset \Diff^s([0,1])$ such that
$\mathcal{N}^{s} \cap \Diff^{\infty}([0,1]) \subset \mathcal{N}$.
We fix such $s$ and $\mathcal{N}^s$.

Then, for given $\rho$, $U$ and $V$, by applying
Lemma~\ref{lemma:connecting 2} where we let $F$ be the identity germ and $r_0 = s$,
we take $h$ which is arbitrarily $C^{\infty}$-close to the identity map
and $\omega \in \{ 0, 1, 2\}^{\ast}$ such that
$\rho_{h}^{\omega}(p) = p'$ and
$[\rho_{h}^{\omega}(t)]_{p} = t + o(t^s)$.

Now, by Remark~\ref{rem:flat}, we choose
$\tilde{h} \in \mathrm{Diff}^{\infty}([0, 1])$ supported in an arbitrarily
small neighborhood of $p'$ such that it is arbitrarily
$C^{s}$ close to the identity and
$[\tilde{h} \circ \rho_{h}^{\omega}(t)] =[t]$.
We take a point $y \in \Sigma_h^{\omega}(p) \cap U$
given by Remark~\ref{rmk:solo2}
and choose $\omega_y$, $\omega'_y$ such
that $\rho^{\omega_y}_h(p)=y$ and
$\omega = \omega'_y \omega_y$.

Now, we take $\bar{h}
= (\rho^{\omega'_y}_{h})^{-1} \circ \tilde{h} \circ \rho^{\omega'_y}_{h}$. Then,  by taking
$\tilde{h}$ arbitrarily $C^s$-close to the identity, we can assume
that $\bar{h} \in \mathcal{N}^{s} \cap \Diff^{\infty}([0,1])$.
Furthermore, by shrinking the support of $\tilde{h}$, we can check that
$\rho^{\omega}_{\bar{h}\circ h}(p) = p'$ keeping
$[\rho^{\omega}_{\bar{h}\circ h}(t)]_p = [t]$ and
all the other conditions un tact.
Thus, $\bar{h}\circ h$ gives us the desired map $g$.
\end{proof}

%

\section{Creation of $r$-flat periodic orbits}\label{sec:flat}
In this section, we prove the following proposition which implies Theorem
 \ref{thm:arbitrary growth}.
\begin{prop}
\label{prop:flat periodic points}
For any $\rho=(f_0,f_1,f_2) \in \cW^\infty$,
 any open neighborhood $\cN$ of the identity map in $\Diff^\infty([0,1])$,
 any $N,l \geq 1$, and $r \in[1, \infty]$,
 there exist $h \in \cN$,
 $(f_1,f_2)$-generic distinct points $\hat{p}_1,\dots,\hat{p}_N$ in $J$,
 and $\gamma_1,\dots,\gamma_N \in \{0,1,2\}^*$
 such that $|\gamma_i| \geq l$,
 $\hat{p}_i$ is an $r$-flat $0 \gamma_i$-periodic point for $\rho_h$
 for any $i=1,\dots,N$,
 and the sets $\{\hat{p}_1,\dots,\hat{p}_N\}$,
 $\Sigma^{\gamma_1}(\hat{p}_1), \dots, \Sigma^{\gamma_N}(\hat{p}_N)$
 are mutually disjoint.
\end{prop}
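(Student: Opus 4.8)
The plan is to prove the statement by induction on $r$, building up flatness order by order, while simultaneously maintaining a reservoir of $N$ disjoint flat periodic points so that the inductive step has enough raw material to work with. The base of the induction is the case $r=3$: I would first show that one can create a large number of disjoint $3$-flat periodic points, and then argue that the passage from $r$-flat to $(r+1)$-flat (for $r\geq 3$) is uniform and can be iterated $N$ times. The $r=3$ base itself has three sub-steps corresponding to the three local algebraic lemmas of Section~\ref{sec:cancel}: (i) produce $1$-flat periodic points of both signs of $A$ using the repeller-attractor heteroclinics $z_1,z_2$ supplied by \textbf{Sign condition I}; (ii) combine a $1$-flat point with positive $A$ and one with negative $A$ via Lemma~\ref{lemma:2-flat} to obtain a $2$-flat point, arranging (using the extra Schwarzian freedom from \textbf{Sign condition II}, i.e. the heteroclinics $z_3,z_4$) that one can produce $2$-flat points of both signs of $a_3 = S$; (iii) combine a $2$-flat point of each sign via Lemma~\ref{lemma:(s+1)-flat} (or Lemma~\ref{lemma:s>3-flat}) to obtain a $3$-flat point.

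The mechanism that glues the algebra to the dynamics is the connecting lemma, Lemma~\ref{lemma:connecting 2}: once we have \emph{one} $r_0$-flat $0\gamma$-periodic point $\hat p$ which is $(f_1,f_2)$-generic and satisfies $\hat p\notin\Sigma^\gamma(\hat p)$, that lemma lets us realize, by an arbitrarily small perturbation supported near a prescribed finite set, a new periodic orbit whose return germ at a chosen generic point $p$ agrees up to order $r_0$ with \emph{any} prescribed germ $F\in\cD$ — in particular with the composition of iterates of flat germs constructed in the algebraic lemmas. So each inductive step goes: take the flat points already in hand, feed their germs (raised to suitable powers, conjugated into $\cD$ via $[\,\cdot\,]_x$) into the appropriate lemma from Section~\ref{sec:cancel}, obtain a germ $H$ close to the identity and exponents $m,n$, realize $H$ as a localized diffeomorphism by Remark~\ref{rmk:realization}, and use the connecting lemma to splice everything into a single periodic orbit of strictly higher flatness. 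The key bookkeeping points are: (a) perturbations are localized away from the supports $\Sigma^{\gamma_i}(\hat p_i)$ of the previously constructed flat points, so finitely many of them survive intact; (b) the blender lets us demand $|\gamma_i|\geq l$ and lets us keep all the relevant points $(f_1,f_2)$-generic and mutually separated (the generic points form a residual set in $J$, so avoiding finitely many closed nowhere-dense sets is free); (c) the condition $\hat p\notin\Sigma^\gamma(\hat p)$, needed to apply Lemma~\ref{lemma:connecting 2} at the next step, can be arranged because, by Remark~\ref{rmk:solo}, the letter $0$ can be made to appear only where we want it, so the orbit is not forced to pass through $\hat p$ again.

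To get $N$ disjoint flat points rather than one, I would simply run the construction $N$ times in sequence, each time working in a fresh open subset of $J$ disjoint from everything built so far — possible since $J$ has nonempty interior and all constructed sets are finite — and each time using a perturbation so small (and so localized) that it does not disturb the earlier points. Since each single-point construction only requires as input a bounded number of lower-order flat points located in regions we are free to choose, there is no circularity. The main obstacle, and the place where the real work lies, is the $r=3$ base case: steps (ii) and (iii) above require not merely producing \emph{some} $2$-flat point but producing $2$-flat points with \emph{prescribed sign of the cubic coefficient} $a_3$, and this is exactly why Lemma~\ref{lemma:2-flat} carries the hypothesis $|(S/A)(F_1)|>|(S/A)(F_2)|$ on the input germs and outputs control of $\sgn(S(F_1)\cdot\{\dots\})$; translating the abstract heteroclinic data of \textbf{Sign conditions I and II} into a pair of $1$-flat periodic germs whose $(S/A)$ ratios are correctly ordered and whose $A$-signs are opposite — using the $C^r$-linearizations $\varphi,\psi$ and the invariance of $\tau_A,\tau_S$ — is the delicate part. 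Once the induction is off the ground at $r=3$, the step $r\to r+1$ is governed by Lemma~\ref{lemma:(s+1)-flat} (or Lemma~\ref{lemma:s>3-flat}) and no longer sees any sign obstruction, so it proceeds uniformly.
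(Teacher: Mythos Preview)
Your plan is correct and matches the paper's proof almost exactly: the same inductive ladder $1\to 2\to 3\to r$, the same use of Lemmas~\ref{lemma:2-flat}, \ref{lemma:(s+1)-flat}, \ref{lemma:s>3-flat} for the algebra, Lemma~\ref{lemma:connecting 2} for the splicing, and localized perturbations to run the construction $N$ times in parallel.

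Two small refinements relative to the paper. First, in step~(i)/(ii) you describe using $z_1,z_2$ for the $A$-signs and $z_3,z_4$ separately for the $S$-signs, but Lemma~\ref{lemma:2-flat} requires the two input $1$-flat germs to have \emph{both} $A$ and $S$ of opposite sign; the paper handles this by a one-line combinatorial argument selecting a single pair $(z_{i_1},z_{i_2})$ from $\{z_1,\dots,z_4\}$ with fully opposite $(\tau_A,\tau_S)$, and then Lemma~\ref{lemma:connecting fl} (the heteroclinic-to-$1$-flat step you allude to via $\varphi,\psi$) produces $1$-flat periodic points inheriting that sign together with an arbitrarily large $|(S/A)|$. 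Second, you do not mention the case $r=\infty$, which the paper treats separately: one first gets an $s$-flat point for a large finite $s$ depending on $\cN$, then collapses the remaining $o(t^s)$ tail by a $C^s$-small (hence $C^\infty$-allowed) bump via Remark~\ref{rem:flat}.
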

Let us, first, see how we derive Theorem \ref{thm:arbitrary growth}
from Proposition \ref{prop:flat periodic points}. In the proof,
we use perturbations given by the following construction.
\begin{rmk}\label{rem:perper}
Let $f \in \mathrm{Diff}^r([0, 1])$ where $1 \leq r \leq + \infty$.
If $f$ coincides with the identity map on some
non-empty open interval $U$,
then for every $\ell >0$,
there exists $g \in \mathrm{Diff}^{\infty}([0, 1])$
which is arbitrarily $C^{\infty}$-close to the identity and is
supported in an arbitrarily
small interval in $U$, such that
$g \circ f$ has more than $\ell$ attracting fixed points in $U$.
For instance, one can build such $g$ as follows:
let $g$ be a map which has the form $x + a \sin (k x)$
($a$ and $k$ are some constants).
Collapse it to the identity map outside $U$ by some
bump function. Then,
by choosing $a$ and $k$ appropriately,
one can see that $g$ will be
arbitrarily $C^{\infty}$-close to the identity and have
an arbitrarily large number of attracting periodic points in $U$.
The details is left to the reader.
\end{rmk}

Another important remark is that $\cW^\infty$ is dense in $\cW^r$
for any $r \geq 1$. For $r \geq 3$, it is trivial, since
$\cA^\infty(\{0,1,2\})$ is dense in $\cA^r(\{0,1,2\})$.
So, let us consider the case $r=2$. For any $\rho=(f_0,f_1,f_2) \in \cW^2$
 with heteroclinic points $z_1$ and $z_2$ satisfying Sign Condition I,
 a $C^2$-small (but $C^3$-large) perturbation of $f_0$ at $z_1$ and $z_2$
 creates a $C^\infty$ map $f$ such that
 $\tau_S(z_1,f) \cdot \tau_S(z_2,f)<0$.
This implies that $\cW^\infty$ is $C^2$-dense in $\cW^2$.
Similarly, we can see that $\cW^\infty$ is $C^1$-dense in $\cW^1$.

\begin{proof}
[Proof of Theorem \ref{thm:arbitrary growth}
 from Proposition \ref{prop:flat periodic points}]
Fix $1 \leq r \leq \infty$
 and a sequence $(a_n)_{n=1}^\infty$ of integers.
Put
\begin{equation*}
 \cU(\omega)=\{\rho \in \cW^r \mid
 \#\Fix_a(\rho^\omega) \geq |\omega|\cdot a_{|\omega|}\}
\end{equation*}
 for $\omega \in \{0,1,2\}^*$
 and $\cU_n=\bigcup_{|\omega|\geq n}\cU(\omega)$ for $n \geq 1$.
By the persistence of attracting periodic points,
$\cU(\omega)$ is open for every $\omega$ and, accordingly,
$\mathcal{U}_n$ is open as well.
Notice that every $\rho \in \bigcap_{n \geq 1}\cU_n$ satisfies
\begin{equation*}
 \limsup_{n \ra \infty}
 \frac{\sum_{\omega \in \{0,1,2\}^n}\Fix_a(\rho^\omega)}{a_n}
 =\infty.
\end{equation*}
Hence, it is sufficient to show that $\cU_n$ is a dense subset of $\cW^r$
 for every $n \geq 1$.

Fix a non-empty open subset $\cU$ of $\cW^r$,
 $\rho=(f_0,f_1,f_2) \in \cU \cap \cW^\infty$, and $n \geq 1$.
Take a neighborhood $\cN^{1/3}$ of the
identity map in $\Diff^\infty([0,1])$
 such that $((h_3 \circ h_2 \circ h_1) \circ f_0, f_1,f_2) \in \cU$
 for any $h_1,h_2,h_3 \in \cN^{1/3}$.
By Proposition \ref{prop:flat periodic points},
 there exist $h_1 \in \cN^{1/3}$, $p \in J$,
 and $\gamma \in \{0,1,2\}^*$ such that $|\gamma| \geq n$
 and $p$ is an $r$-flat $0\gamma$-periodic point of $\rho_{h_1}$
 satisfying $p \not\in \Sigma_{h_1}^\gamma(p)$.
Take $h_2 \in \cN^{1/3}$ such that
 $\supp(h_2) \cap \Sigma_{h_1}^\gamma(p)=\emptyset$,
 and $\rho_{h_2 \circ h_1}^{0\gamma} = h_2 \rho_{h_1}^{0\gamma}$
 is the identity map on a small neighborhood $V$ of $p$
 (see Remark~\ref{rem:flat}).
We also take $h_3 \in \cN^{1/3}$ such that
 $\supp(h_3) \cap \Sigma^\gamma(p)=\emptyset$
 and $\rho^{0\gamma}_{h_3\circ h_2 \circ h_1}$ admits
 more than $a_{|0\gamma|}|0\gamma|$ attracting fixed points in $V$
 (see Remark~\ref{rem:perper}).
Now $((h_3 \circ h_2 \circ h_1) \circ f_0, f_1,f_2)$
 is contained in $\cU \cap \cU_n$.
Since the choice of $\cU$ is arbitrary, the set $\cU_n$ is dense in $\cW^r$.
\end{proof}

Let us prove Proposition~\ref{prop:flat periodic points}.
The proof is done by several inductive steps.
The following notation will be used throughout this Section.
Let $\rho=(f_0,f_1,f_2) \in \cA^\infty(\{0,1,2\})$
and $J \subset [0,1]$ be a closed interval
such that $(f_1,f_2)$ is a blender on $J$.
We assume that $f_0$ has a repeller-attractor pair $(p,q)$ in $\Int J$.

In the following first step,
we create $1$-flat periodic points
 which satisfy certain estimates on $A$ and $S$.
Recall that for a germ $F \in \cD$, we denote
$(S/A)(F) = S(F)/A(F)$.
\begin{lemma}
\label{lemma:connecting fl}
In the above setting,
 assume that for a repeller-attractor pair $(p, q)$
 the repeller $p$ is $(f_1,f_2)$-generic,
  $(\log f_0'(p) ) / (\log f'_0(q)) $ is irrational,
 and the pair $(p,q)$ has a heteroclinic point $z_*$
 with $\tau_A(z_*,f_0) \neq 0$ and $\tau_S(z_*,f_0) \neq 0$.
Then,
 for any neighborhood $\cN$ of the identity map in $\Diff^\infty([0,1])$ and
 any finite subset $\Lambda$ of $[0,1]$ and $\nu >0$,
 there exist $h \in \cN$, an $(f_1,f_2)$-generic point
 $\hat{p} \in J \setminus \Lambda$, and
 $\gamma \in \{0,1,2\}^*$ such that
 $\hat{p}$ is a $1$-flat $0\gamma$-periodic point for $\rho_h$,
 $(\supp (h) \cup \Sigma_h^\gamma(\hat{p})) \cap \Lambda =\emptyset$,
 the sign of the germ $[\rho_h^{0\gamma}]_{\hat{p}}$ is
 $(\tau_A(z_*,f_0),\tau_S(z_*,f_0))$, and
 $|(S/A)([\rho_h^{0\gamma}]_{\hat{p}})|>\nu$.
\end{lemma}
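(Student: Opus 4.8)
The plan is to realize the required germ as a long excursion of $f_0$ along the heteroclinic orbit of $z_*$, glued into a loop by the blender, and then to make the resulting periodic point neutral by an arbitrarily small perturbation.

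First I would set up the local picture. Let $\vphi$ and $\psi$ be the normalized $C^\infty$-linearizations of $f_0$ at $p$ and $q$, so $\vphi\circ f_0=\lambda_p\vphi$, $\psi\circ f_0=\lambda_q\psi$ with $\lambda_p=f_0'(p)>1>\lambda_q=f_0'(q)$, and put $g=\psi\circ\vphi^{-1}$; by definition $\sgn A(g)_{\vphi(z_*)}=\tau_A(z_*,f_0)$ and $\sgn S(g)_{\vphi(z_*)}=\tau_S(z_*,f_0)$, both nonzero. Fix a large $m$ and take a point $\hat p$ within $\lambda_p^{-m}\varepsilon$ of $f_0^{-m}(z_*)$; then $w_1:=f_0^m(\hat p)$ is within $\varepsilon$ of $z_*$, and for $n$ large $\tilde q:=f_0^{m+n}(\hat p)=\psi^{-1}(\lambda_q^n\psi(w_1))$ is within $\varepsilon$ of $q$. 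In the chart $\vphi$ on the source and $\psi$ on the target, $f_0^{m+n}$ near $\hat p$ equals $L_{\lambda_q^n}\circ g\circ L_{\lambda_p^m}$; since the linear germs $L_\alpha$ have vanishing non-linearity and Schwarzian, the cocycle identities of Remark~\ref{rmk.tech}, together with the standard behaviour of $A$ and $S$ under composition and inversion and with $\vphi'(p)=\psi'(q)=1$, give
\[
A(f_0^{m+n})_{\hat p}=\lambda_p^m A(g)_{\vphi(z_*)}\,(1+o(1)),\qquad
S(f_0^{m+n})_{\hat p}=\lambda_p^{2m} S(g)_{\vphi(z_*)}\,(1+o(1)),\qquad (f_0^{m+n})'(\hat p)=\lambda_p^m\lambda_q^n\,g'(\vphi(z_*))\,(1+o(1)),
\]
the $o(1)$ being made small by taking $\varepsilon$ small and $m,n$ large (so $\hat p$ is near $p$, $w_1$ near $z_*$, $\tilde q$ near $q$, $\lambda_q^n\to0$). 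Thus the heteroclinic part alone already has sign $(\tau_A,\tau_S)$ and $|S/A|$-ratio of order $\lambda_p^m\to\infty$.

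Next I would close this excursion into a loop through a $(f_1,f_2)$-generic base point. The set of $(f_1,f_2)$-generic points is residual in $J$, hence so is $\{x\in J\cap f_0(J):x\text{ and }f_0^{-1}(x)\text{ are }(f_1,f_2)\text{-generic}\}$; choose $\hat p$ in this set, inside the small interval around $f_0^{-m}(z_*)$ and outside $\Lambda$ and the countable set of points whose $f_0$-orbit meets $\Lambda$. Fix a small open $U\subset\Int(J\cap f_0(J))$ disjoint from $\Lambda$ and from the orbit segment $\{f_0^j(\hat p):0\le j\le m+n\}$. Apply Lemma~\ref{lemma:connecting 1} to connect $\tilde q$ to the $(f_1,f_2)$-generic point $f_0^{-1}(\hat p)$: this yields $h\in\cN$ and a word $\mu$ (which can be kept of bounded length) with $\supp(h)\cup\Sigma_h^\mu(\tilde q)\subset U$ and $\rho_h^\mu(\tilde q)=f_0^{-1}(\hat p)$, and one further, unperturbed, application of $f_0$ returns to $\hat p$. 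Reading the period map from $\hat p$ gives the word $0\gamma$ with $\gamma=\mu\,0^{m+n}$; here $\hat p$ is $(f_1,f_2)$-generic, $\hat p\in J\setminus\Lambda$, and both $\supp(h)$ and $\Sigma_h^\gamma(\hat p)$ (the orbit segment above together with points of $U$) are disjoint from $\Lambda$.

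It remains to make $\hat p$ exactly $1$-flat and to see that closing up does not spoil the sign computation. The multiplier of the period map at $\hat p$ is $f_0'(f_0^{-1}\hat p)\cdot(\rho_h^\mu)'(\tilde q)\cdot(f_0^{m+n})'(\hat p)$; since $\log\lambda_p/\log\lambda_q$ is irrational, $\{(m+1)\log\lambda_p+n\log\lambda_q:m,n\ge1\}$ is dense in $\RR$ with $m$ unbounded, so $m$ (huge) and $n$ can be chosen with this multiplier inside a fixed bounded window around $1$, and a $C^\infty$-small localized perturbation of $f_0$ at finitely many points of $U$ then makes it equal to $1$ while keeping the loop closed, so that $[\rho_h^{0\gamma}]_{\hat p}$ is $1$-flat. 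For the sign: $\rho_h^\mu$ is a composition of the contractions $f_1,f_2$ with a bounded number of $f_0$-steps, so by the cocycle identities its non-linearity and Schwarzian at $\tilde q$ are bounded, while $(f_0^{m+n})'(\hat p)=1/\big(f_0'(f_0^{-1}\hat p)(\rho_h^\mu)'(\tilde q)\big)$ and its square are bounded by the $1$-flatness; hence the contributions of $\rho_h^\mu$ and of the final $f_0$ to $A(\rho_h^{0\gamma})_{\hat p}$ and $S(\rho_h^{0\gamma})_{\hat p}$ are $O(1)$, negligible against $\lambda_p^m A(g)_{\vphi(z_*)}$ and $\lambda_p^{2m}S(g)_{\vphi(z_*)}$. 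Therefore $[\rho_h^{0\gamma}]_{\hat p}$ has sign $(\tau_A(z_*,f_0),\tau_S(z_*,f_0))$ and $|(S/A)([\rho_h^{0\gamma}]_{\hat p})|=\lambda_p^m|(S/A)(g)_{\vphi(z_*)}|(1+o(1))>\nu$ once $m$ is large. The hard part is precisely this last step: the choices of $m,n$, of the base point $\hat p$ (which must be $(f_1,f_2)$-generic, lie in a small prescribed interval, and have $f_0^{-1}(\hat p)$ generic), and of the connecting word are interdependent, so balancing the heteroclinic expansion against the blender contraction has to be organised carefully — fixing a tentative configuration, reading off the bounded window the multiplier can fall into, using irrationality to land inside it, and absorbing the residual discrepancy by a perturbation that disturbs neither the heteroclinic block nor the dominant terms of $A$ and $S$ — while also keeping $\supp(h)$ and the whole segment $\Sigma_h^\gamma(\hat p)$ off $\Lambda$ and the connecting word of bounded length; these verifications are routine but must be done with some care.
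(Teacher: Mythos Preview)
Your overall strategy is right --- realize the period map as a long heteroclinic excursion along the orbit of $z_*$, closed up via the blender, then correct the multiplier --- and your computation that the $A$ and $S$ contributions of the heteroclinic block scale like $\lambda_p^m$ and $\lambda_p^{2m}$ while the closing part contributes $O(1)$ is exactly the mechanism. But the organization has a real gap, precisely the one you flag as ``the hard part'': your base point $\hat p$ depends on $m$ (it sits near $f_0^{-m}(z_*)$), and the connecting word $\mu$ and perturbation $h$ produced by Lemma~\ref{lemma:connecting 1} depend on both endpoints $\tilde q$ and $f_0^{-1}(\hat p)$, hence on $m,n$. Lemma~\ref{lemma:connecting 1} gives no bound on $|\mu|$ nor on $(\rho_h^\mu)'(\tilde q)$ uniformly in the endpoints, so the claim that the closing contribution is $O(1)$ independently of $m,n$ is unsupported, and the choice of $m,n$ to hit a ``bounded window'' for the multiplier becomes circular. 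Your sketch (``fix a tentative configuration, read off the window, \ldots'') does not resolve this, because changing $m$ forces a new $\hat p$ and a new connecting word.

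The paper breaks the circularity by a different setup: fix $\hat p$ \emph{once}, outside the heteroclinic interval $[p,q]$, and fix in advance blender words $\eta,\eta'\in\{1,2\}^*$ sending a neighborhood of $f_0(\hat p)$ near $p$ and a neighborhood of $q$ back near $f_0^{-1}(\hat p)$. The dependence on $m,n$ is then absorbed entirely into a \emph{continuous family} of perturbations $h_{s,v}$ supported near $\{\hat p,f_0(\hat p),f_0^{-1}(\hat p)\}$: the parameter $v=v_{m,n}$ records the exact entry/exit points $((\rho^\eta)^{-1}(f_0^{-m}(z_*)),\,f_0\circ\rho^{\eta'}(f_0^n(z_*)))$, which converge to a fixed limit $\bar v$ as $m,n\to\infty$, and the scalar $s$ tunes the derivative. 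With $\eta,\eta',\hat p$ fixed, the multiplier equals $e^s\lambda_p^m\lambda_q^n\cdot c(v_{m,n})\cdot(\psi\circ\vphi^{-1})'(\vphi(z_*))$ for a \emph{continuous} function $c$; irrationality of $\log\lambda_p/\log\lambda_q$ gives $(m_k,n_k)$ with $\lambda_p^{m_k}\lambda_q^{n_k}$ hitting the right value in the limit, and $s_k\to 0$ fixes the residual error exactly. Since $h_{s_k,v_{m_k,n_k}}\to h_{0,\bar v}$, the perturbation stays in $\cN$. The $A$, $S$ computation then proceeds as you wrote, but with the closing germs $F_k,G_k$ converging to fixed limits rather than being uncontrolled. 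If you want to salvage your version, the fix is exactly this: do not let $\hat p$ depend on $m$; fix it and the blender words first, and carry the $m,n$-dependence in a parameterized perturbation family.
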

\begin{proof}
We may assume that $\Lambda \cap \{f_0^i(z_*)\mid i \in \ZZ\}=\emptyset$
by replacing $z_*$ with a nearby point if necessary.

We denote
\[
X =  f_0^{-1}(\Lambda) \cup \Lambda \cup f_0(\Lambda) \cup [p,q],
\]
where $[p, q]$ denotes the closed interval whose ends are $p$ and $q$
(note that we do not assume $p$ is to the left of $q$).
Notice that $\{ f^i(z_{*}) \mid i \in \mathbb{Z} \} \subset [p, q]$.
Since $\Int J$ contains a repelling fixed point $p$ of $f_0$,
we see that
$ (\mathrm{Int}(J) \cap f_0(\mathrm{Int}(J)) \cap
f_0^{-1}(\mathrm{Int}(J))) \setminus X
\neq \emptyset$.
Take a point $\hat{p}$ in this set
such that $\hat{p}$ is $(f_1,f_2)$-generic
and $f_0(\hat{p}) \neq \hat{p}$.
Remark that the three points $\hat{p}$, $f_0(\hat{p})$,
 and $f_0^{-1}(\hat{p})$
 are mutually distinct (since $f_0$ is orientation-preserving).
Also, notice that by construction
$\{\hat{p}, f_0(\hat{p}), f_0^{-1}(\hat{p})\}
\cap (\Lambda \cup [p, q]) =\emptyset$.
We take an open neighborhood $U$ of
$\{\hat{p}, f_0(\hat{p}), f_0^{-1}(\hat{p})\}$
in  $
(\mathrm{Int}(J) \cap f_0(\mathrm{Int}(J)) \cap
f_0^{-1}(\mathrm{Int}(J))) \setminus (\Lambda \cup [p, q])$.
Then we take
a neighborhood $V$ of $(f_0(\hat{p}),\hat{p})$ in $U \times U$,
 a real number $\epsilon>0$,
 and a continuous family $(h_{s,v})_{s \in (-\epsilon,\epsilon), v \in V}$
 in $\cN$ such that
\begin{enumerate}
\item $\supp(h_{s,(x,y)}) \subset U$,
\item $h_{s,(x,y)}(f_0(\hat{p}))=x$, $(h_{s,(x,y)})'(f_0(\hat{p}))=1$,
\item $h_{s,(x,y)}(y)=\hat{p}$,
 and $(h_{s,(x,y)})'(y)=e^s$,
\end{enumerate}
 for any $s \in (-\epsilon,\epsilon)$ and $(x,y) \in V$.
Since, by construction,
$f_0(\hat{p}), f^{-1}_0(\hat{p}) \in J$
and $p$ is $(f_1,f_2)$-generic,
 there exist $\eta,\eta' \in \{1,2\}^*$ such that
 $\bar{v}=((\rho^{\eta})^{-1}(p), f_0 \circ \rho^{\eta'}(q))$
 is contained in $V$.
Let $\gamma_{m,n}  = 0\eta'0^{m+n}\eta 0$ and
\begin{align*}
v_{m,n} & =
 ((\rho^\eta)^{-1}(f_0^{-m}(z_*)), f_0 \circ \rho^{\eta'}(f_0^n(z_*)))
\end{align*}
 for $m,n \geq 1$.
Since $f_0^{n}(z_*)$ converges to $q$ and $f_0^{-n}(z_*)$
converges to $p$ as $n \ra \infty$,
 $v_{m,n}$ converges to $\bar{v}$ as $m,n \ra \infty$.
Fix $N \geq 1$ such that $v_{m,n} \in V$ for any $m,n \geq N$.
For any $m,n \geq N$, we have
\begin{equation}
\label{eqn:connect 1}
\rho_{h_{s,v_{m,n}}}^{\gamma_{m,n}}  =
 (h_{s,v_{m,n}} \circ f_0)
 \circ \rho^{\eta'} \circ f_0^{m+n} \circ \rho^\eta
 \circ (h_{s,v_{m,n}} \circ f_0)
\end{equation}
 in a small neighborhood of $\hat{p}$.
In particular, $\rho_{h_{s,v_{m,n}}}^{\gamma_{m,n}}(\hat{p})=\hat{p}$;
see Figure~\ref{fig:connect 1}.
\begin{figure}
\begin{center}
\includegraphics[scale=0.8]{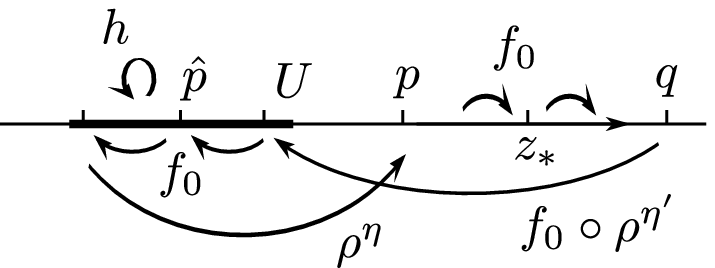}
\caption{Proof of Lemma \ref{lemma:connecting fl}}
\label{fig:connect 1}
\end{center}
\end{figure}

We will show that $\gamma=\gamma_{m,n}$ and $h=h_{s,v_{m,n}}$
 satisfy the required properties at $\hat{p}$
 for suitably chosen $m, n$ and $s$.
Let $\varphi:W^u(p) \ra \RR$ and $\psi:W^s(q) \ra \RR$ be
 linearizations of $f_0$ at $p$ and $q$ respectively.
Notice that since $\mathrm{supp}\, (h_{s, (x, y)}) \subset U$,
the perturbation by $h_{s, (x, y)}$ does not change the
behavior of $\varphi$ and $\psi$ on $(p, q)$.
Let $\lambda_p=f_0'(p)$, $\lambda_q=f_0'(q)$, and define a continuous function
\begin{align*}
 c(x,y) =
 (f_0 \circ \rho^{\eta'} \circ \psi^{-1})'
 ((f_0 \circ \rho^{\eta'} \circ \psi^{-1})^{-1}(y))
 \cdot
  (\varphi \circ \rho^\eta)'(x)
 \cdot f_0'(\hat{p})
\end{align*}
 on $V$.
By equality (\ref{eqn:connect 1}), notice that the following holds:
\begin{equation*}
(\rho_{h_{s,v_{m,n}}}^{\gamma_{m,n}})'(\hat{p})
 = e^s \cdot  \lambda_p^m \cdot \lambda_q^n \cdot c(v_{m,n})
 \cdot (\psi \circ \varphi^{-1})'(\varphi(z_*)).
\end{equation*}
 for any $m,n \geq N$.
Since $\lambda_p>1>\lambda_q$ and
 the ratio $\log\lambda_p/\log \lambda_q$ is irrational,
 there exist increasing sequences $(m_k)_{k \geq 1}$ and $(n_k)_{k \geq 1}$
 such that
\begin{equation*}
 \lim_{k \ra \infty} \lambda_p^{m_k} \cdot \lambda_q^{n_k} \cdot c(\bar{v})
 \cdot (\psi \circ \varphi^{-1})'(\varphi(z_*))=1.
\end{equation*}
By the continuity of the function $c$, the sequence $c(v_{m_k,n_k})$ converges to
 $c(\bar{v})$ as $k\ra \infty$.
Hence, we can choose a converging to zero sequence of real numbers
$(s_k)_{k \geq 1}$
 such that $|s_k|<\epsilon$ and
\begin{equation*}
(\rho_{h_{s_k,v_{m_k,n_k}}}^{\gamma_{m_k,n_k}})'(\hat{p})
= e^{s_k} \cdot \lambda_p^{m_k} \cdot \lambda_q^{n_k} \cdot c(v_{m_k,n_k})
 \cdot (\psi \circ \varphi^{-1})'(\varphi(z_*))
=1
\end{equation*}
 for all large $k$.

 Let us estimate
 $A([\rho_{h}^{0\gamma}]_{\hat{p}})$ and
 $S([\rho_h^{0\gamma}]_{\hat{p}})$
 for $(\gamma,h)=(\gamma_{m_k,n_k},h_{s_k,v_{m_k,n_k}})$.
Let
\begin{align*}
 F_k & =[\varphi \circ \rho^{\eta}
 \circ (h_{s_k,v_{m_k,n_k}} \circ f_0)]_{\hat{p}},\\
 G_k & =[(h_{s_k,v_{m_k,n_k}} \circ f_0) \circ \rho^{\eta'}
 \circ \psi^{-1}]_{\psi \circ f_0^{n_k}(z_*)}.
\end{align*}
Then, $F_k$ and $G_k$ converge to
 $[\varphi \circ \rho^{\eta} \circ  (h_{0,\bar{v}} \circ f_0)]_{\hat{p}}$
 and $[(h_{0,\bar{v}} \circ f_0) \circ \rho^{\eta'} \circ \psi^{-1}]_{\psi(q)}$
 respectively, and
\begin{equation*}
\left[\rho_{h_{s_k,v_{m_k,n_k}}}^{0\gamma_{m_k,n_k}}\right]_{\hat{p}}
 =G_k \circ L_{\lambda_q}^{n_k} \circ [\psi \circ \varphi^{-1}]_{\varphi(z_*)}
 \circ L_{\lambda_p}^{m_k} \circ F_k,
\end{equation*}
 where $L_\lambda$ is the germ of the map $x \mapsto \lambda x$ at $0$.
By the cocycle property of $A(\,\cdot\,)$
 and the equality
 $(\rho_{h_{s_k,v_{m_k,n_k}}}^{0\gamma_{m_k,n_k}})'(\hat{p})=1$,
 together with the obvious relation $A(L_{\lambda})=0$,
we have
\begin{equation*}
A([\rho_{h_{s_k,v_{m_k,n_k}}}^{0\gamma_{m_k,n_k}}]_{\hat{p}})
 =A(G_k) \cdot ((G_k)')^{-1}
 + A([\psi\circ \varphi^{-1}]_{\varphi(z_*)})
 \cdot \lambda_p^{m_k} (F_k)'
 + A(F_k).
\end{equation*}
This implies that
\begin{equation*}
\lim_{k \ra \infty}
 \lambda_p^{-m_k} \cdot
 A([\rho_{h_{s_k,v_{m_k,n_k}}}^{0\gamma_{m_k,n_k}}]_{\hat{p}})
 =A([\psi \circ \varphi^{-1}]_{\varphi(z_*)}).
\end{equation*}
Similarly, we have
\begin{equation*}
\lim_{k \ra \infty}
 \lambda_p^{-2m_k} \cdot
 S([\rho_{h_{s_k,v_{m_k,n_k}}}^{0\gamma_{m_k,n_k}}]_{\hat{p}})
 =S([\psi \circ \varphi^{-1}]_{\varphi(z_*)}).
\end{equation*}
Therefore, the germ
 $[\rho_{h_{s_k,v_{m_k,n_k}}}^{0\gamma_{m_k,n_k}}]_{\hat{p}}$
 has the same sign as $[\psi \circ \varphi^{-1}]_{\varphi(z_*)}$
 and $|(S/A)[\rho_{h_{s_k,v_{m_k,n_k}}}^{0\gamma_{m_k,n_k}}]_{\hat{p}}|>\nu$
 for all large $k$.

Thus we have completed the construction of
$h= h_{s_k,v_{m_k,n_k}}$ and $\hat{p}$ having
all the desired properties.
\end{proof}

\begin{rmk}\label{rmk:length}
In the proof of Lemma~\ref{lemma:connecting fl},
we can assume that the length of $\gamma$ is arbitrarily long.
Indeed, we only need to choose one which corresponds to a large $k$.
\end{rmk}

On the second step, we construct
a $2$-flat periodic point from five $1$-flat periodic points.
\begin{lemma}
\label{lemma:2-flat periodic}
Suppose that
 there exist mutually distinct $(f_1,f_2)$-generic points
 $p_1,\dots,p_5$ in $J$
 and $\gamma_1,\dots, \gamma_5 \in \{ 0,1,2\}^*$ such that
\begin{enumerate}
 \item each $p_i$ is a $1$-flat $0 \gamma_i$-periodic point,
 \item $\Sigma^{\gamma_1}(p_1), \dots, \Sigma^{\gamma_5}(p_5)$
 and $\{p_1,\dots,p_5\}$ are mutually disjoint,
 \item $[\rho^{0 \gamma_4}]_{p_4}$ and
  $[\rho^{0 \gamma_5}]_{p_5}$ have opposite signs, and
\begin{equation*}
|(S/A)([\rho^{0 \gamma_4}]_{p_4})|
 >|(S/A)([\rho^{0 \gamma_5}]_{p_5})|.
\end{equation*}
\end{enumerate}
Then, for any neighborhood $V$ of $\{p_1,\dots,p_5\}$,
any neighborhood $\cN$ of the identity map in $\Diff^\infty([0,1])$,
 and any non-empty open subset $U$ of $J \cap f_0(J)$,
 there exist an $(f_1,f_2)$-generic point $\hat{p} \in U$,
 $\hat{\gamma} \in \{0,1,2\}^*$, and $h \in \cN$ such that
 $\supp(h) \subset U \cup V$,
 $\hat{p}$ is a $2$-flat $0 \hat{\gamma}$-periodic point for $\rho_h$
 with $\hat{p} \not\in \Sigma_{h}^{\hat{\gamma}}(\hat{p})$,
 $\Sigma_{h}^{\hat{\gamma}}(\hat{p})
 \subset \bigcup_{i=1}^5 \Sigma^{\gamma_i}(p_i) \cup U \cup V$,
 and
 $\sgn (S([\rho_h^{0 \hat{\gamma}}]_{\hat{p}}))
 =\sgn (S([\rho^{0 \gamma_4}]_{p_4}))$.
\end{lemma}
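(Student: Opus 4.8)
The plan is to realise the required $2$-flat periodic point of $\rho_h$ as a single long periodic orbit through a point $\hat p\in U$ that we are free to choose: its itinerary will leave $\hat p$, travel to $p_4$, loop around $p_4$ some large number $m$ of times with a small extra perturbation inserted at $p_4$, travel on to $p_5$, loop $n$ times around $p_5$, and return to $\hat p$. The three ``travelling'' segments will be built by the connecting lemma (Lemma~\ref{lemma:connecting 2}) with $p_1,p_2,p_3$ playing the role of the gadget $1$-flat periodic points, which lets us prescribe, up to first order, the germ of each segment; we use this freedom to force every connecting germ to be $1$-flat (derivative $1$ at its base point). Then every factor of the period map is a $1$-flat germ, so by the cocycle rule of Remark~\ref{rmk.tech} the non-linearity and the Schwarzian derivative of the whole period germ are the plain sums of those of the factors. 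The three connecting germs contribute some fixed pair $(\alpha,\beta)$ of real numbers, and killing the total non-linearity while prescribing the sign of the total Schwarzian becomes precisely the algebraic statement of Lemma~\ref{lemma:2-flat} applied to $F_1=[\rho^{0\gamma_4}]_{p_4}$ and $F_2=[\rho^{0\gamma_5}]_{p_5}$; this is why hypothesis (3) --- opposite signs and $|(S/A)(F_1)|>|(S/A)(F_2)|$ --- is exactly what is needed.

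Concretely, I first pick $\hat p\in U$ that is $(f_1,f_2)$-generic and has $f_0^{-1}(\hat p)$ also $(f_1,f_2)$-generic --- a residual condition, so such $\hat p$ exist in the open set $U$ --- and lying off the finite exceptional set $\{p_1,\dots,p_5\}\cup\bigcup_i\Sigma^{\gamma_i}(p_i)$ together with its $f_0$-image. Then I fix small, mutually disjoint sets: non-empty open $U_1,U_2,U_3\subset U$, neighbourhoods $V_1,V_2,V_3\subset V$ of $p_1,p_2,p_3$, and a neighbourhood $W_4\subset V$ of $p_4$, all disjoint from $\hat p$, from $f_0^{-1}(\hat p)$, and from every $\Sigma^{\gamma_j}(p_j)$ except the matching one. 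I apply Lemma~\ref{lemma:connecting 2} three times: to connect $\hat p$ to $p_4$ (gadget $p_1$, zone $(U_1,V_1)$, realising the identity germ to first order), $p_4$ to $p_5$ (gadget $p_2$, zone $(U_2,V_2)$, identity germ), and $p_5$ to $f_0^{-1}(\hat p)$ (gadget $p_3$, zone $(U_3,V_3)$, realising the linear germ $x\mapsto x/f_0'(f_0^{-1}(\hat p))$ to first order). Appending one \emph{unperturbed} $f_0$ to the last word carries the orbit on to $\hat p$, turns the last connecting germ into one of derivative $1$, and --- crucially --- makes the resulting period word begin with the letter $0$, so that it has the required form $0\hat\gamma$. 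Since the three perturbations have pairwise disjoint supports inside $U\cup V$, and the post-$f_0$ points of each segment lie in the corresponding $\Sigma^{\gamma_j}(p_j)\cup U\cup V$, they superimpose to a single $h_0\in\cN$ under which the travelling orbit $\hat p\to p_4\to p_5\to\hat p$ is realised; call the three (now $1$-flat) connecting germs $C_A,C_B,C_C$.

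Now I set $\alpha=A(C_A)+A(C_B)+A(C_C)$ and $\beta=S(C_A)+S(C_B)+S(C_C)$, and invoke Lemma~\ref{lemma:2-flat} with $F_1=[\rho^{0\gamma_4}]_{p_4}$, $F_2=[\rho^{0\gamma_5}]_{p_5}$ and these $\alpha,\beta$: it yields a $1$-flat germ $H$, as close to the identity as desired, and integers $m,n\geq1$ with $A(F_2^n)+A((H\circ F_1)^m)+\alpha=0$ and $S(F_1)\cdot\{S(F_2^n)+S((H\circ F_1)^m)+\beta\}>0$. Realising $H$ via Remark~\ref{rmk:realization} as a diffeomorphism $\tilde h$ supported in $W_4$ with $\tilde h(p_4)=p_4$ and $[\tilde h]_{p_4}=H$, I insert $(0\gamma_4)^m$ perturbed by $\tilde h$ (so its germ at $p_4$ is $(H\circ F_1)^m$) and $(0\gamma_5)^n$ unperturbed (germ $F_2^n$ at $p_5$) into the itinerary. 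With $h=\tilde h\circ h_0\in\cN$ (disjoint supports) and $0\hat\gamma$ the full concatenated word, the period germ factors as a composition of $1$-flat germs
\[
[\rho_h^{0\hat\gamma}]_{\hat p}=C_C\circ F_2^n\circ C_B\circ(H\circ F_1)^m\circ C_A ,
\]
so its derivative is $1$, its non-linearity equals $A(F_2^n)+A((H\circ F_1)^m)+\alpha=0$ (hence $[\rho_h^{0\hat\gamma}]_{\hat p}$ is $2$-flat), and its Schwarzian equals $S(F_2^n)+S((H\circ F_1)^m)+\beta$, which by the second inequality has the same sign as $S(F_1)=S([\rho^{0\gamma_4}]_{p_4})$. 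The remaining requirements --- $\supp(h)\subset U\cup V$, $\Sigma_h^{\hat\gamma}(\hat p)\subset\bigcup_{i=1}^5\Sigma^{\gamma_i}(p_i)\cup U\cup V$, and $\hat p\notin\Sigma_h^{\hat\gamma}(\hat p)$ --- follow from the placement of the zones, using that the unique $f_0$ whose post-image is $\hat p$ is the leading letter of $0\hat\gamma$ and hence contributes nothing to $\Sigma_h^{\hat\gamma}(\hat p)$.

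The real work is in the bookkeeping: one must choose $\hat p$, the zones, and the three connecting orbits so that all perturbations have pairwise disjoint supports contained in $U\cup V$, so that none of them disturbs the orbit segments built by the others, and so that $\hat p$ does not recur as a post-$f_0$ point along $\hat\gamma$. Since every set of exceptional points involved (the $p_i$, the $\Sigma^{\gamma_i}(p_i)$, and their $f_0$-images) is finite, this is routine but needs care. A secondary technical point --- arranging the period word to begin literally with the letter $0$ --- is dealt with by routing the last connecting segment to $f_0^{-1}(\hat p)$ instead of to $\hat p$ and appending one unperturbed $f_0$, a trick that simultaneously repairs the first derivative of the last connecting germ. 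Apart from this, the argument is a direct orchestration of Lemma~\ref{lemma:connecting 2} with the algebraic Lemma~\ref{lemma:2-flat}, whose hypotheses are exactly assumption (3) of the statement.
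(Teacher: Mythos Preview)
Your argument is correct and follows essentially the same route as the paper's proof: choose a generic base point $\hat p\in U$ (the paper calls it $\bar p$), use Lemma~\ref{lemma:connecting 2} three times with gadgets $p_1,p_2,p_3$ to build $1$-flat connecting germs $\hat p\to p_4\to p_5\to f_0^{-1}(\hat p)$, then apply Lemma~\ref{lemma:2-flat} to $F_1=[\rho^{0\gamma_4}]_{p_4}$ and $F_2=[\rho^{0\gamma_5}]_{p_5}$ with $\alpha,\beta$ the sums of the connecting $A$'s and $S$'s, and realise the resulting germ $H$ near $p_4$. The only cosmetic differences are in the order of choices (you pick $\hat p$ before the zones $U_i$; the paper carves out $U_0\ni\bar p$ first) and in how the third connecting germ is described (you ask for the linear germ $x\mapsto x/f_0'(f_0^{-1}(\hat p))$, the paper asks for $([f_0]_{f_0^{-1}(\bar p)})^{-1}$; since $r_0=1$ these agree to the relevant order).
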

\begin{proof}
Without loss of generality,
 we may assume that $U$, $V$ and $\bigcup_{i=1}^5\Sigma^{\gamma_i}(p_i)$
 are mutually disjoint.

We prepare several regions for the construction of the periodic orbit.
Take a neighborhood $\cN^{1/4}$ of the identity in $\Diff^{\infty}([0,1])$
 such that any composition of four diffeomorphisms
 in $\cN^{1/4}$ belongs to $\cN$.
 We take a neighborhood $V_i$ of $p_i$ for each $i=1,\dots,5$
 such that $V_1,\dots,V_5$ are mutually disjoint subsets of $V$.
 Then, we take non-empty, mutually disjoint open sets
 $U_i$ ($i=0, 1, 2, 3$) in $U$ such that $f_{0}^{-1}(U_0)$
 is also disjoint from $U_i$ ($i=1,\ldots, 3$)
 and $V_i$ ($i=1,\ldots, 5$).
 It is not difficult to choose such $U_i$ and $V_i$, so we leave
 it to the reader.
Finally, we take an $(f_1,f_2)$-generic point $\bar{p}$ in $U_0$
such that $f^{-1}_0(\bar{p})$ is also $(f_1,f_2)$-generic.
One can check that such a choice is indeed possible
by using the fact that being an $(f_1,f_2)$-generic point
is a generic property in $J$.

Now we construct the periodic orbit by using our connecting lemmas.
First, by Lemma \ref{lemma:connecting 2},
with $\bar{p}, p_4, p_1, U_1$ and $V_1$ corresponding to
$p, p' ,\hat{p}, U$ and $V$ respectively and
 $F$ being the identity germ,
 we obtain $\omega_1 \in \{0,1,2\}^*$ and $h_1 \in \cN^{1/4}$ such that
 $\supp(h_1) \subset (U_1 \cup V_1)$,
 $\rho_{h_1}^{\omega_1}(\bar{p})=p_4$,
 and $H_1=[\rho_{h_1}^{\omega_1}]_{\bar{p}}$ is $1$-flat.
Similarly, we apply Lemma \ref{lemma:connecting 2}
to $(p_4, p_5, p_2, U_2, V_2)$ and, again, letting $F$ be the identity germ,
and to $(p_5, f_0^{-1}(\bar{p}), p_3, U_3, V_3)$ and
$F=([f_0]_{f_0^{-1}(\bar{p})})^{-1}$, and obtain that
 there exist $\omega_2,\omega_3 \in \{0,1,2\}^*$
 and $h_2,h_3 \in \cN^{1/4}$ such that
\begin{enumerate}
 \item $\supp(h_i) \subset (U_i \cup V_i) $ for $i=2,3$,
 \item $\rho_{h_2}^{\omega_2}(p_4)=p_5$,
 $H_2=[\rho_{h_2}^{\omega_2}]_{p_4}$ is $1$-flat, and
 \item $\rho_{h_3}^{\omega_3}(p_5)=f_0^{-1}(\bar{p})$,
 $H_3=[f_0]_{f_0^{-1}(\bar{p})} \circ [\rho_{h_3}^{\omega_3}]_{p_5}$
 is $1$-flat.
\end{enumerate}

Let $h_{\flat} = h_1\circ h_2 \circ h_3$ and
$\gamma_{m,n}=\omega_3 (0 \gamma_5)^m \omega_2 (0 \gamma_4)^n \omega_1$ for $m,n \geq 1$.
Notice that, by construction, $\bar{p}$ is
a $1$-flat $0\gamma_{m, n}$-periodic point of $\rho_{h_{\flat}}$.
We show that by adding a further perturbation and choosing
$m, n$ appropriately, we can obtain the $2$-flatness and
the condition on the Schwarzian derivative.

Let $\alpha=\sum_{i=1}^3 A(H_i)$ and $\beta=\sum_{i=1}^3 S(H_i)$.
By Lemma \ref{lemma:2-flat}
(where we let $F_1 = [\rho^{0\gamma_4}]_{p_4}$
and $F_2 = [\rho^{0\gamma_5}]_{p_5}$),
together with Remark~\ref{rmk:realization},
we take $h_4 \in \cN^{1/4}$ and $m,n \geq 1$ such that
 $\supp(h_4) \subset V_4$, $h_4(p_4)=p_4$, $[h_4]_{p_4}$ is $1$-flat, and the following holds:
\begin{gather*}
 A([\rho^{0 \gamma_5}]_{p_5}^m)
 + A(([h_4]_{p_4} \circ [\rho^{0 \gamma_4}]_{p_4})^n)+\alpha=0.\\
 S([\rho^{0 \gamma_4}]_{p_4})
 \cdot \left\{
 S([\rho^{0 \gamma_5}]_{p_5}^m)
 +S(([h_4]_{p_4} \circ [\rho^{0 \gamma_4}]_{p_4})^n)+\beta \right\} >0.
\end{gather*}

Let $h=h_1 \circ h_2 \circ h_3 \circ h_4 =h_{\flat}\circ h_4$.
This is a map in $\cN$ such that
 $\supp(h) \subset (\bigcup_{i=1}^3 U_i \cup \bigcup_{i=1}^4 V_i)$.
Since $U_1, U_2, U_3, V_1,\dots,V_5$ are mutually
 disjoint and they do not intersects with
 $\{\bar{p}, f_0^{-1}(\bar{p})\} \cup \bigcup_{i=1}^5 \Sigma^{\gamma_i}(p_i)$,
 we have
\begin{align*}
[\rho_h^{0\gamma_{m,n}}]_{\bar{p}}
 & = [\rho_{h_3}^{0\omega_3}]_{p_5}
  \circ [\rho^{0\gamma_5}]_{p_5}^m
  \circ [\rho_{h_2}^{\omega_2}]_{p_4}
  \circ [\rho_{h_4}^{0\gamma_4}]_{p_4}^n
  \circ [\rho_{h_1}^{\omega_1}]_{\hat{p}}\\
 & = H_3 \circ [\rho^{0\gamma_5}]_{p_5}^m
 \circ H_2 \circ [\rho_{h_4}^{0\gamma_4}]_{p_4}^n \circ H_1.
\end{align*}
This, together with the $1$-flatness of each germ, implies that
\begin{align*}
A([\rho_h^{0 \gamma_{m,n}}]_{\bar{p}})
 & = A([\rho^{0 \gamma_5}]_{p_5}^m)
 + A(([h_4]_{p_4} \circ [\rho^{0 \gamma_4}]_{p_4})^n)+\alpha,\\
S([\rho_h^{0 \gamma_{m,n}}]_{\bar{p}})
 & = S([\rho^{0 \gamma_5}]_{p_5}^m)
 + S(([h_4]_{p_4} \circ [\rho^{0 \gamma_4}]_{p_4})^n)+\beta.
\end{align*}
Therefore,
$A([\rho_h^{0 \gamma_{m,n}}]_{\bar{p}})=0$
 and
$S([\rho_h^{0 \gamma_{m,n}}]_{\bar{p}})
\cdot S([\rho^{0\gamma_4}]_{p_4}) >0$.
By construction, one can see that
  $\Sigma_h^{\gamma_{m,n}}(\bar{p})
 \subset
\left( \bigcup_{i=1}^3 U_i \cup \bigcup_{i=1}^5 (\Sigma^{\gamma_i}(p_i) \cup V_i ) \right)$.
In particular,
 $\bar{p} \not\in \Sigma_h^{\gamma_{m,n}}(\bar{p})$
 and $\Sigma_h^{\gamma_{m,n}}(\bar{p})
  \subset \bigcup_{i=1}^5 \Sigma^{\gamma_i}(p_i) \cup U \cup V$.

Thus we have constructed the desired
$\hat{p} = \bar{p}$, $\hat{\gamma}=\gamma_{m, n}$ and $h$.
\end{proof}

The following lemma is the third step.
\begin{lemma}
\label{lemma:3-flat periodic}
Suppose that
 there exist mutually distinct $(f_1,f_2)$-generic points
 $p_1,\dots,p_5$ in $J$
 and $\gamma_1,\dots, \gamma_5 \in \{0,1,2\}^*$ such that
\begin{enumerate}
 \item each $p_i$ is a $2$-flat $0 \gamma_i$-periodic point,
 \item $\Sigma^{\gamma_1}(p_1), \dots, \Sigma^{\gamma_5}(p_5)$
 and $\{p_1,\dots,p_5\}$ are mutually disjoint,
 \item $S([\rho^{0 \gamma_4}]_{p_4})
 \cdot S([\rho^{0 \gamma_5}]_{p_5})<0$.
\end{enumerate}
Then, for any neighborhood $V$ of $\{p_1,\dots,p_5\}$,
any neighborhood $\cN$ of the identity map in $\Diff^\infty([0,1])$,
 and any non-empty open subset $U$ of $J \cap f_0(J)$,
 there exist an $(f_1,f_2)$-generic point $\hat{p} \in U$,
 $\gamma \in \{0,1,2\}^*$, and $h \in \cN$ such that
 $\supp(h) \subset U \cup V$,
 $\hat{p}$ is a $3$-flat $0 \gamma$-periodic point for $\rho_h$
 with $\hat{p} \not\in \Sigma^\gamma(\hat{p})$, and
 $\Sigma^\gamma(\hat{p})
 \subset \bigcup_{i=1}^5 \Sigma^{\gamma_i}(p_i) \cup U \cup V$.
\end{lemma}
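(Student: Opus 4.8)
The plan is to run the proof of Lemma~\ref{lemma:2-flat periodic} essentially word for word, with ``$1$-flat'' replaced everywhere by ``$2$-flat'' and with the algebraic input Lemma~\ref{lemma:2-flat} replaced by Lemma~\ref{lemma:(s+1)-flat} taken with $r=2$. This substitution is legitimate because, for a $2$-flat germ, the Schwarzian derivative at $0$ coincides with the third derivative, so hypothesis~(3), $S([\rho^{0\gamma_4}]_{p_4})\cdot S([\rho^{0\gamma_5}]_{p_5})<0$, is exactly the sign condition $F_1^{(3)}F_2^{(3)}<0$ required to apply Lemma~\ref{lemma:(s+1)-flat}. Moreover, since producing a $3$-flat point requires no control on the fourth-order coefficient, there is no auxiliary sign to carry along --- this is the ``$k\ge 3$'' phenomenon noted in the introduction --- which makes the argument slightly shorter than that of Lemma~\ref{lemma:2-flat periodic}.

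Concretely, I would first repeat the region set-up: assume $U$, $V$ and $\bigcup_i\Sigma^{\gamma_i}(p_i)$ are mutually disjoint, pick $\cN^{1/4}$ so that any fourfold composition of its elements lies in $\cN$, choose mutually disjoint neighbourhoods $V_i\subset V$ of $p_i$ and mutually disjoint non-empty open $U_0,\dots,U_3\subset U$ with $f_0^{-1}(U_0)$ disjoint from the $U_i$ $(i\ge1)$ and the $V_i$, and fix an $(f_1,f_2)$-generic $\bar p\in U_0$ with $f_0^{-1}(\bar p)$ also $(f_1,f_2)$-generic. Then, \emph{using that $p_1,p_2,p_3$ are $2$-flat}, apply Lemma~\ref{lemma:connecting 2} with $r_0=2$ three times --- to $(\bar p,p_4,p_1)$ and $(p_4,p_5,p_2)$ with $F=\mathrm{id}$, and to $(p_5,f_0^{-1}(\bar p),p_3)$ with $F=([f_0]_{f_0^{-1}(\bar p)})^{-1}$, keeping supports in the prescribed $U_i\cup V_i$. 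This produces words $\omega_1,\omega_2,\omega_3$ and $h_1,h_2,h_3\in\cN^{1/4}$ for which $H_1=[\rho_{h_1}^{\omega_1}]_{\bar p}$, $H_2=[\rho_{h_2}^{\omega_2}]_{p_4}$ and $H_3=[f_0]_{f_0^{-1}(\bar p)}\circ[\rho_{h_3}^{\omega_3}]_{p_5}$ are \emph{all $2$-flat}. With $\gamma_{m,n}=\omega_3(0\gamma_5)^m\omega_2(0\gamma_4)^n\omega_1$, the point $\bar p$ becomes a $2$-flat $0\gamma_{m,n}$-periodic point with
\[
[\rho_{h_1\circ h_2\circ h_3\circ h_4}^{\,0\gamma_{m,n}}]_{\bar p}
 = H_3\circ[\rho^{0\gamma_5}]_{p_5}^m\circ H_2\circ\bigl([h_4]_{p_4}\circ[\rho^{0\gamma_4}]_{p_4}\bigr)^n\circ H_1
\]
for any $h_4$ supported in $V_4$ and fixing $p_4$.

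It then remains to pick $h_4,m,n$ making this germ $3$-flat. I would apply Lemma~\ref{lemma:(s+1)-flat} with $r=2$, $F_1=[\rho^{0\gamma_4}]_{p_4}$, $F_2=[\rho^{0\gamma_5}]_{p_5}$ (legitimate by hypothesis~(3)), and with $\alpha$ equal to minus the sum of the third-order coefficients of $H_1,H_2,H_3$; combined with Remark~\ref{rmk:realization} this yields $h_4\in\cN^{1/4}$ supported in $V_4$ with $[h_4]_{p_4}$ $2$-flat, and integers $m,n\ge1$ with $F_2^m\circ([h_4]_{p_4}\circ F_1)^n(x)=x+\alpha x^3+o(x^3)$. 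All five factors in the displayed composition are then $2$-flat, so Remark~\ref{rmk.tech} (in its $2$-flat version: $2$-flat germs commute up to order $3$ and their third derivatives add) shows the third-order coefficient of the composite equals the sum of those of $H_1,H_2,H_3$ plus $\alpha$, hence $0$; thus $[\rho_h^{0\gamma_{m,n}}]_{\bar p}$ is $3$-flat for $h=h_1\circ h_2\circ h_3\circ h_4\in\cN$. Finally, exactly as in Lemma~\ref{lemma:2-flat periodic}, the disjointness of the $U_i,V_i$ from $\{\bar p,f_0^{-1}(\bar p)\}\cup\bigcup_i\Sigma^{\gamma_i}(p_i)$ gives $\Sigma_h^{\gamma_{m,n}}(\bar p)\subset\bigcup_{i=1}^3U_i\cup\bigcup_{i=1}^5(\Sigma^{\gamma_i}(p_i)\cup V_i)\subset\bigcup_i\Sigma^{\gamma_i}(p_i)\cup U\cup V$, and $\bar p\notin\Sigma_h^{\gamma_{m,n}}(\bar p)$ since $\bar p\in U_0$ is disjoint from this set; $\bar p$ remains $(f_1,f_2)$-generic because only $f_0$ is perturbed. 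So $\hat p=\bar p$ and $\gamma=\gamma_{m,n}$ work.

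I do not expect a genuinely hard step --- the construction is dictated by that of Lemma~\ref{lemma:2-flat periodic}. The one point needing real care is producing the intermediate germs $H_1,H_2,H_3$ as \emph{$2$-flat} germs rather than merely $1$-flat ones, so that their third derivatives add cleanly under composition with the powers of $[\rho^{0\gamma_i}]_{p_i}$; this is possible only because $p_1,p_2,p_3$ are assumed $2$-flat, which is precisely what allows Lemma~\ref{lemma:connecting 2} to be invoked with $r_0=2$. Everything else --- the bookkeeping of supports and of $\Sigma_h^\gamma(\hat p)$ --- is identical to the proof of Lemma~\ref{lemma:2-flat periodic}.
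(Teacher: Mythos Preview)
Your proposal is correct and follows essentially the same route as the paper's own proof: replace Lemma~\ref{lemma:2-flat} by Lemma~\ref{lemma:(s+1)-flat} with $r=2$, invoke Lemma~\ref{lemma:connecting 2} with $r_0=2$ to make $H_1,H_2,H_3$ $2$-flat, and then use commutativity of $2$-flat germs modulo $o(x^3)$ to cancel the third-order term. The paper phrases the choice of $\alpha$ as $\alpha_0$ (the coefficient of $x^3$ in $H_3\circ H_2\circ H_1$) and asks for $x-\alpha_0 x^3+o(x^3)$, which is the same as your ``$\alpha$ equals minus the sum of the third-order coefficients of the $H_i$''.
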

\begin{proof}
The proof is done similarly to the proof of Lemma \ref{lemma:2-flat periodic},
with the use of Lemma \ref{lemma:(s+1)-flat} instead of Lemma \ref{lemma:2-flat}.

Without loss of generality,
 we may assume that $U$, $V$ and $\bigcup_{i=1}^5\Sigma^{\gamma_i}(p_i)$
 are mutually disjoint.
Take a neighborhood $\cN^{1/4}$ of the identity in $\Diff^\infty([0,1])$
 such that any composition of four diffeomorphisms
 in $\cN^{1/4}$ belongs to $\cN$.
We fix open neighborhoods $V_i$ of $p_i$ for each $i=1,\dots,5$
 such that $V_1,\dots,V_5$ are mutually disjoint subsets of $V$.
We also take mutually disjoint non-empty open sets
$U_i$ ($i =0, 1, 2, 3$), such that $f_0^{-1}(U_0)$
is also disjoint from $U_i$ ($i=0, 1, 2, 3$) and
 $V_i$ ($i = 1, \ldots 5$).
Finally, we take an $(f_1, f_2)$-generic point
$\bar{p} \in U_0$ such that
$f_0^{-1}(\bar{p})$ is also $(f_1, f_2)$-generic.

Then, by applying Lemma \ref{lemma:connecting 2} with $r_0=2$ in
 the same way as Lemma \ref{lemma:2-flat periodic},
 we obtain $\omega_1,\omega_2,\omega_3 \in \{0,1,2\}^*$,
 $h_1,h_2,h_3 \in \cN^{1/4}$ such that
\begin{enumerate}
 \item  $\supp(h_i) \subset (U_i \cup V_i)$
 for $i=1,2,3$,
 \item $\rho_{h_1}^{\omega_1}(\bar{p})=p_4$,
  $H_1=[\rho_{h_1}^{\omega_1}]_{\bar{p}}$ is $2$-flat,
 \item $\rho_{h_2}^{\omega_2}(p_4)=p_5$,
 $H_2=[\rho_{h_2}^{\omega_2}]_{p_2}$ is $2$-flat, and
 \item $\rho_{h_3}^{\omega_3}(p_5)=f_0^{-1}(\bar{p})$,
 $H_3=[f_0]_{f_0^{-1}(\bar{p})} \circ [\rho_{h_3}^{\omega_3}]_{p_2}$
 is $2$-flat.
\end{enumerate}
Since each $H_i$ is $2$-flat,
 there exists $\alpha_0 \in \RR$ such that
\begin{equation*}
 H_3 \circ H_2 \circ H_1(x)=x+\alpha_0 x^3 +o(x^3).
\end{equation*}
By Lemma~\ref{lemma:(s+1)-flat} for $r=2$
and $\alpha = \alpha_0$
(notice that since $p_i$ is $2$-flat, our third assumption implies
$[\rho^{0\gamma_4}]_{p_4}^{(3)} \cdot
[\rho^{0\gamma_5}]_{p_5}^{(3)}<0$),
 there exist $h_4 \in \cN^{1/4}$ and $m,n \geq 1$ such that
 $\supp(h_4) \subset V_4$, $h_4(p_4)=p_4$, $[h_4]_{p_4}$ is $2$-flat, and
\begin{equation*}
[\rho^{0\gamma_5}]_{p_5}^m \circ
 ([h_4]_{p_4} \circ [\rho^{0\gamma_4}]_{p_4})^n(x)
 =x-\alpha_0 x^3+o(x^3).
\end{equation*}
Let $h=h_4 \circ h_3 \circ h_2 \circ h_1$.
This is a map in $\cN$ such that
 $\supp(h) \subset (\bigcup_{i=1}^3 U_i \cup \bigcup_{i=1}^4 V_i)$.
We also put
 $\gamma_{m,n}=\omega_3 (0 \gamma_5)^m \omega_2 (0\gamma_4)^n \omega_1$.
Since $V_1,\dots,V_5$ are mutually
 disjoint and they do not intersects with
 $\{\bar{p}, f_{0}^{-1}(\bar{p})\} \cup \bigcup_{i=1}^5 \Sigma^{\gamma_i}(p_i)$,
 we have
\begin{align*}
[\rho_h^{0\gamma_{m,n}}]_{\bar{p}}
 & = [\rho_{h_3}^{0\omega_3}]_{p_5}
  \circ [\rho^{0\gamma_5}]_{p_5}^m
  \circ [\rho_{h_2}^{\omega_2}]_{p_4}
  \circ [\rho_{h_4}^{0\gamma_4}]_{p_4}^n
  \circ [\rho_{h_1}^{\omega_1}]_{\bar{p}}\\
 & = H_3 \circ [\rho^{0\gamma_5}]_{p_5}^m
  \circ H_2 \circ ([h]_{p_4} \circ [\rho^{0\gamma_4}]_{p_4})^n \circ H_1.
\end{align*}
Since any $2$-flat germs commute with each other modulo $o(x^3)$,
 this implies that
 the germ $[\rho_h^{0\gamma_{m,n}}]_{\bar{p}}$ is $3$-flat.
As in the proof of Lemma~\ref{lemma:2-flat periodic},
by construction we can check
  $\bar{p} \not\in \Sigma_h^{\gamma_{m,n}}(\bar{p})$
 and $\Sigma_h^{\gamma_{m,n}}(\bar{p})
  \subset \bigcup_{i=1}^5 \Sigma^{\gamma_i}(p_i) \cup U \cup V$.

Thus $\hat{p} =\bar{p}$ is the desired 3-flat
$(0\gamma_{m, n})$-periodic point for $\rho_{h}$.
\end{proof}

The fourth step is the following
\begin{lemma}
\label{lemma:s>3-flat periodic}
Suppose that $3 \leq r <\infty$
 and there exist mutually distinct $(f_1,f_2)$-generic points
 $p_1,\dots,p_9$ in $J$
 and $\gamma_1,\dots, \gamma_9 \in \{0,1,2\}^*$ such that
\begin{enumerate}
 \item each $p_i$ is an $r$-flat $0 \gamma_i$-periodic point, and
 \item $\Sigma^{\gamma_1}(p_1), \dots, \Sigma^{\gamma_9}(p_9)$,
 and $\{p_1,\dots,p_9\}$ are mutually disjoint.
\end{enumerate}
Then, for any neighborhood $V$ of $\{p_1,\dots,p_9\}$,
any neighborhood $\cN$ of the identity map in $\Diff^\infty([0,1])$,
 and any non-empty open subset $U$ of $J \cap f_0(J)$,
 there exists an $(f_1,f_2)$-generic point $\hat{p} \in U$,
 $\gamma \in \{0,1,2\}^*$, and $h \in \cN$ such that
 $\supp(h) \subset U \cup V$,
 $\hat{p}$ is an $(r+1)$-flat $0 \gamma$-periodic point for $\rho_h$
 with $\hat{p} \not\in \Sigma^\gamma(\hat{p})$,
 $\Sigma^\gamma(\hat{p})
 \subset \bigcup_{i=1}^9 \Sigma^{\gamma_i}(p_i) \cup U \cup V$.
\end{lemma}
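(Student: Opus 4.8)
The plan is to reprise, essentially verbatim, the proof of Lemma~\ref{lemma:3-flat periodic}, replacing the two‑germ cancellation Lemma~\ref{lemma:(s+1)-flat} by the four‑germ cancellation Lemma~\ref{lemma:s>3-flat}. This is exactly what forces nine flat points here instead of five: Lemma~\ref{lemma:s>3-flat} produces an $(r+1)$‑flat germ, with a prescribed coefficient of $x^{r+1}$, out of \emph{four arbitrary} $r$‑flat germs, imposing \emph{no} sign condition on the leading coefficients $F_i^{(r+1)}$ (this is the essential gain over Lemma~\ref{lemma:(s+1)-flat}, and it is what makes the induction self‑sustaining for $r\geq 3$, since $\cW^r$ with $r\geq 3$ carries no hypothesis on derivatives of order $\geq 4$). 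I will use $F_i=[\rho^{0\gamma_{5+i}}]_{p_{5+i}}$, $i=1,\dots,4$, as the four active germs, and devote the remaining five points $p_1,\dots,p_5$ to supplying connecting germs, one for each of the five hops $\bar p\to p_6\to p_7\to p_8\to p_9\to f_0^{-1}(\bar p)$ of the periodic orbit being built.

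Concretely, as in Lemma~\ref{lemma:3-flat periodic}: fix $\cN^{1/9}$ so that any composition of nine of its elements lies in $\cN$; choose mutually disjoint open sets $U_0,\dots,U_5\subset U$ and $V_1,\dots,V_9\subset V$ with $f_0^{-1}(U_0)$ disjoint from all of them and from $\bigcup_i\Sigma^{\gamma_i}(p_i)$; and pick an $(f_1,f_2)$‑generic $\bar p\in U_0$ with $f_0^{-1}(\bar p)$ also $(f_1,f_2)$‑generic. Then apply Lemma~\ref{lemma:connecting 2} with $r_0=r$ five times, taking $F$ to be the identity germ in the first four applications and $F=([f_0]_{f_0^{-1}(\bar p)})^{-1}$ in the last one, to obtain words $\omega_1,\dots,\omega_5$ and maps $h_1,\dots,h_5\in\cN^{1/9}$, supported in the disjoint sets $U_j\cup V_j$, realizing these hops, whose germs $G_1=[\rho_{h_1}^{\omega_1}]_{\bar p}$, $G_2=[\rho_{h_2}^{\omega_2}]_{p_6}$, \dots, $G_4=[\rho_{h_4}^{\omega_4}]_{p_8}$ and $G_5=[f_0]_{f_0^{-1}(\bar p)}\circ[\rho_{h_5}^{\omega_5}]_{p_9}$ are all $r$‑flat ($G_5$ absorbs the $f_0$ that closes the loop at $\bar p$). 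A composition of $r$‑flat germs being $r$‑flat, $G:=G_5\circ\cdots\circ G_1$ is $r$‑flat, hence $G(x)=x+\alpha_0 x^{r+1}+o(x^{r+1})$ for some $\alpha_0\in\RR$.

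Next, apply Lemma~\ref{lemma:s>3-flat} to $F_1,\dots,F_4$ with $\alpha=-\alpha_0$, getting germs $H_1,\dots,H_4$ arbitrarily close to the identity and $n\geq 1$ with $(H_4\circ F_4)^n\circ\cdots\circ(H_1\circ F_1)^n(x)=x-\alpha_0 x^{r+1}+o(x^{r+1})$. By Remark~\ref{rmk:realization}, realize each $H_i$ as a diffeomorphism $\tilde h_i\in\cN^{1/9}$ with $\tilde h_i(p_{5+i})=p_{5+i}$, $[\tilde h_i]_{p_{5+i}}=H_i$, $\supp\tilde h_i\subset V_{5+i}$; set $h=\tilde h_4\circ\cdots\circ\tilde h_1\circ h_5\circ\cdots\circ h_1\in\cN$ and let $\gamma=\omega_5(0\gamma_9)^n\omega_4(0\gamma_8)^n\omega_3(0\gamma_7)^n\omega_2(0\gamma_6)^n\omega_1$. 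By the disjointness of all supports the period germ at $\bar p$ factors as $[\rho_h^{0\gamma}]_{\bar p}=G_5\circ(H_4\circ F_4)^n\circ G_4\circ(H_3\circ F_3)^n\circ G_3\circ(H_2\circ F_2)^n\circ G_2\circ(H_1\circ F_1)^n\circ G_1$; since every factor is $r$‑flat, all factors commute modulo $o(x^{r+1})$ by Remark~\ref{rmk.tech}, so this germ equals, modulo $o(x^{r+1})$, $G\circ\big((H_4\circ F_4)^n\circ\cdots\circ(H_1\circ F_1)^n\big)=(x+\alpha_0 x^{r+1})\circ(x-\alpha_0 x^{r+1})=x+o(x^{r+1})$. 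Thus $\hat p:=\bar p$ is an $(r+1)$‑flat $0\gamma$‑periodic point of $\rho_h$, and the remaining bookkeeping — $\hat p\notin\Sigma_h^\gamma(\hat p)$, $\supp(h)\subset U\cup V$, and $\Sigma_h^\gamma(\hat p)\subset\bigcup_{i=1}^9\Sigma^{\gamma_i}(p_i)\cup U\cup V$ — is checked exactly as in Lemma~\ref{lemma:3-flat periodic}.

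The conceptually substantive step is entirely Lemma~\ref{lemma:s>3-flat} (including the reason $r\geq 3$ is needed there: the commutator trick produces a term $(r-2)\mu t^2 x^{r+1}$, which degenerates at $r=2$). The one delicate point in the present lemma is that inserting the connecting germs $G_j$ between the four blocks $(H_i\circ F_i)^n$ must not destroy the order‑$(r+1)$ cancellation; this is precisely why Lemma~\ref{lemma:connecting 2} is invoked with $r_0=r$, making each $G_j$ (hence $G$) $r$‑flat so that all germs involved commute up to order $r+1$. The rest — arranging the mutually disjoint regions $U_j$, $V_j$ and tracking $\Sigma_h^\gamma$ — is routine, just as in the preceding lemmas.
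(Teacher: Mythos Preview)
Your proposal is correct and follows exactly the route the paper indicates: the paper's own proof reads, in full, ``One proves this in the same way as Lemma~\ref{lemma:3-flat periodic}, using Lemma~\ref{lemma:s>3-flat} instead of Lemma~\ref{lemma:(s+1)-flat}. Hence we omit the details.'' Your write-up supplies precisely those details in the natural way.

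One small slip in the justification: you assert that ``every factor is $r$-flat,'' but the blocks $(H_i\circ F_i)^n$ are in general only $1$-flat, since the perturbing germs $H_i$ produced by Lemma~\ref{lemma:s>3-flat} are themselves only $1$-flat (e.g.\ $H_2=G^{-t_n}$ with $G^t(x)=x/(1-tx)=x+tx^2+\cdots$). This does not affect your conclusion, because the connecting germs $G_1,\dots,G_5$ \emph{are} $r$-flat (by your choice $r_0=r$ in Lemma~\ref{lemma:connecting 2}), and by Remark~\ref{rmk.tech} an $r$-flat germ commutes with any $1$-flat germ modulo $o(x^{r+1})$; this is enough to slide each $G_j$ past the $(H_i\circ F_i)^n$ blocks while keeping the order of those blocks fixed, yielding exactly the rearrangement $G\circ\big((H_4\circ F_4)^n\circ\cdots\circ(H_1\circ F_1)^n\big)$ that you need.
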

\begin{proof}
One proves this in the same way as Lemma \ref{lemma:3-flat periodic},
 using Lemma \ref{lemma:s>3-flat} instead of Lemma \ref{lemma:(s+1)-flat}. Hence we omit the details.
\end{proof}

%
%

Now we finish the proof of Proposition \ref{prop:flat periodic points}.
The case $r=\infty$ is reduced to the case $r < \infty$.
So, first we consider the case $r<\infty$.

\begin{proof}
[Proof of Proposition \ref{prop:flat periodic points} for $r < \infty$]
Take $\rho=(f_0,f_1,f_2) \in \cW^\infty$.
Let $J$ be the closed subinterval of $[0,1]$ on which $(f_1,f_2)$
 is a blender,
 $(\bar{p}_1,\bar{q}_1), \dots, (\bar{p}_4,\bar{q}_4)$
  be repeller-attractor pairs in $\Int J$,
 $z_i \in W^u(\bar{p}_i) \cap W^s(\bar{q}_i)$ $(i=1,2,3,4)$
 be heteroclinic points such that
 $\tau_A(z_1,f_0)>0>\tau_A(z_2,f_0)$ 
 and $\tau_S(z_3,f_0)>0>\tau_S(z_4,f_0)$.
By perturbing $f_0$ if it is necessary,
 we may assume that $\bar{p}_i$ is $(f_1,f_2)$-generic,
 $(\log f_0'(\bar{p}_i))/(\log f_0'(\bar{q}_i))$ are irrational,
 $\tau_A(z_i,f_0)\neq 0$, and $\tau_S(z_i,f_0) \neq 0$ for $i=1,\dots,4$.
By an elementary combinatorial argument\footnote{
If $\tau_S(z_1,f_0)\neq \tau_S(z_2)$, then choose $(i_1,i_2)=(1,2)$.
If $\tau_S(z_1,f_0)= \tau_S(z_2)=+1$ and $\tau_A(z_4,f_0)=+1$,
 then choose $(i_1,i_2)=(2,4)$. Other cases are similar.},
 there exists a pair $(i_1,i_2)$ in $\{1,2,3,4\}$ such that
 the signs of $z_{i_1}$ and $z_{i_2}$ are opposite.

First, we show how
we construct a $3$-flat periodic point.
Fix a neighborhood $\cN$ of the identity map
 in $\Diff^{\infty}([0,1])$.
Take a smaller neighborhood $\cN^{1/3}$ of the identity
 such that any composition of three maps in $\cN^{1/3}$ belongs to $\cN$.
Applying Lemma \ref{lemma:connecting fl} repeatedly
 to the heteroclinic points $z_{i_1}$ and $z_{i_2}$,
 we obtain a diffeomorphism $h_1 \in \cN^{1/3}$,
 $(f_1,f_2)$-generic points $p_1,\dots,p_{25} \in J$,
 and $\gamma_1,\dots,\gamma_{25}\in \{0, 1, 2\}^{\ast}$ such that
 $p_i$ is a $1$-flat $0\gamma_i$-periodic point
 satisfying the following:
\begin{itemize}
\item $\Sigma_{h_1}^{\gamma_1}(p_1),\dots,\Sigma_{h_1}^{\gamma_{25}}(p_{25})$,
 and $\{p_1,\dots,p_{25}\}$ are mutually disjoint,
\item
 $[\rho_{h_1}^{0 \gamma_{5j-1}}]_{p_{19}}$
 and $[\rho_{h_1}^{0 \gamma_{5j}}]_{p_{24}}$
 have opposite signs and
 $|(S/A)([\rho_{h_1}^{0 \gamma_{5j-1}}]_{p_{5j-1}})|
 >|(S/A)([\rho_{h_1}^{0 \gamma_{5j}}]_{p_{5j}})|$
 for any $j=1,\dots,5$,
\item
 $S([\rho_{h_1}^{0 \gamma_{19}}]_{p_{19}})>0>
 S([\rho_{h_1}^{0 \gamma_{24}}]_{p_{24}})$.
\end{itemize}
Notice that we may apply Lemma \ref{lemma:connecting fl}
repeatedly so that the perturbations do not interfere with each other,
since Lemma \ref{lemma:connecting fl} allows us to localize
the support of perturbations away from a given finite set.

Then, by applying Lemma~\ref{lemma:2-flat periodic}
 to each quintuple of $1$-flat periodic points
 $(p_{5j-4},\dots,p_{5j})$ of $\rho_{h_1}$ ($j=1,\ldots, 5$),
 we take $h_2 \in \cN^{1/3}$,
 $p'_j \in J$, and $\gamma'_j \in \{0, 1, 2\}^{\ast}$
  such that
 $p'_j$ is an $(f_1, f_2)$-generic, $2$-flat
 $0\gamma'_j$-periodic point of $\rho_{h_2 \circ h_1}$
 for $j=1,\ldots 5$,
 $\Sigma_{h_2 \circ h_1}^{\gamma'_1}(p'_1),
   \dots,\Sigma_{h_2 \circ h_1}^{\gamma'_5}(p'_5)$,
 and $\{p'_1,\dots,p'_5\}$ are mutually disjoint, and
\begin{equation*}
S([\rho_{h_2 \circ h_1}^{0 \gamma'_4}]_{p'_4})>0>
 S([\rho_{h_2 \circ h_1}^{0 \gamma'_5}]_{p'_5}).
\end{equation*}

Then, by applying Lemma \ref{lemma:3-flat periodic} to $(p'_1,\dots,p'_5)$,
 we take a diffeomorphism $h_3 \in \cN^{1/3}$,
 an $(f_1,f_2)$-generic point $\hat{p}$ in $J$,
 and $\hat{\gamma} \in \{0,1,2\}^*$
 such that
 $\hat{p}$ is a $3$-flat $0\hat{\gamma}$-periodic point of
 $\rho_{h_3 \circ h_2 \circ h_1}$
 and $\hat{p} \not\in \Sigma_h^{\hat{\gamma}}(\hat{p})$.
Thus we have constructed a $3$-flat
$0\gamma$-periodic point $\hat{p}$ of $\rho_{h}$,
where $h=h_3 \circ h_2 \circ h_1$ is a diffeomorphism in $\cN$.
Notice that the length of $\gamma$ can be taken arbitrarily large,
since by Remark~\ref{rmk:length} we can assume that the
lengths of the $1$-flat periodic points produced in the first step
are arbitrarily large.

In a similar way, for $r \geq 3$, by Lemma \ref{lemma:s>3-flat periodic}
 we can construct an $(r+1)$-flat periodic point
 with an arbitrary large period from nine $r$-flat periodic points
 by a small perturbation.
Hence, we obtain $N$ of $r$-flat periodic points
 with an arbitrary large period
 starting with $25\cdot 9^{r-3}N$ of $1$-flat periodic points.
\end{proof}

Finally, let us consider the case $r = \infty$.
\begin{proof}
[Proof of Proposition \ref{prop:flat periodic points} for $r = \infty$]
For simplicity we only consider the case $N=1$ (the proof for the
general case is done similarly).
First, as in the proof of Lemma~\ref{lemma:connecting inf},
given a neighborhood
$\cN \subset \mathrm{Diff}^{\infty}([0, 1])$ of the identity map,
there exist $s >1$ and
a $C^s$-neighborhood $\cN^s \subset \mathrm{Diff}^{s}([0, 1])$ of the
identity map such that
$\mathcal{N}^s \cap \mathrm{Diff}^{\infty}([0, 1]) \subset
\cN$.

For $\rho \in \cW^{\infty}$ in the assumption of the proposition,
we apply the already proven result for finite $r$ with $r =s$.
This gives us $h \in \cN$,
$\hat{p} \in J$, and $\gamma \in \{0, 1, 2\}^{\ast}$
such that $\hat{p}$ is an $s$-flat $0\gamma$-periodic point
for $\rho_{h}$ such that $\hat{p} \not\in \Sigma^{\gamma}(\hat{p})$.
Now, by Remark~\ref{rem:flat}, we take a
$C^{\infty}$-diffeomorphism $g$
which is $C^s$-close to the identity
such that $[g]_{\hat{p}} \circ [\rho^{0\gamma}_h]_{\hat{p}}$
is the identity germ and
$\mathrm{supp}(g) \cap \Sigma^{\gamma}_h(\hat{p}) = \emptyset$.
By choosing $g$ sufficiently $C^s$-close to the identity,
we can assume that $g \circ h \in \cN^s$.
Accordingly, we have $g \circ h \in \cN$. Now,
$\hat{p}$ is an $\infty$-flat $0\gamma$-periodic point for $\rho_{g \circ h}$.
Thus the proof is completed.

\end{proof}


\section{Universal semigroups}\label{sec:universal}

In this section, we prove the following proposition
which,  together with Proposition~\ref{prop:flat periodic points},
implies Theorem \ref{thm:universal}.
 \begin{prop}
\label{prop:universal}
Let $\rho=(f_0,f_1,f_2)$ be an element of
$\cA^{\infty}(\{0,1,2\})$
 and $(\theta_1,\dots,\theta_N)$
 be in $(\mathcal{E}^{\infty})^N$.
Suppose that there exist distinct $(f_1,f_2)$-generic points
 $p_1,\dots,p_{4N}$ and words $\gamma_1,\dots,\gamma_{4N} \in \{0,1,2\}^*$
 such that $p_i$ is an $\infty$-flat $0 \gamma_i$-periodic point
 for each $i=1,\dots,4N$,
 and the sets $\{p_1,\dots,p_{4N}\}$,
 $\Sigma^{\gamma_1}(p_1)$, \dots, $\Sigma^{\gamma_{4N}}(p_{4N})$
 are mutually disjoint.
Then, for any neighborhood $\cN$ of the identity in $\Diff^{\infty}([0,1])$,
 there exist a map $h \in \cN$,
 a closed interval $I \subset [0,1]$,
 an affine diffeomorphism $\Phi:[0,1] \ra I$,
 and words $\omega_1,\dots,\omega_N \in \{0,1,2\}^*$
 such that $\Phi \circ \theta_k = \rho_{h}^{\omega_k} \circ \Phi$
for every $k=1,\dots,N$.
\end{prop}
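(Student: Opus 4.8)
The plan is to realize each $\theta_k$ by one small perturbation $h$ of $f_0$ together with a long word $\omega_k$, built from the following mechanism. If $p_i$ is an $\infty$-flat $0\gamma_i$-periodic point then, by Remark~\ref{rem:flat}, an arbitrarily $C^\infty$-small perturbation localized in a small neighbourhood $W_i\ni p_i$ makes the return map $\rho^{0\gamma_i}$ equal to the identity on all of $W_i$; inserting one more perturbation equal on $W_i$ to the time-$\tfrac1M$ map $G^{1/M}$ of a one-parameter group $(G^t)$ of diffeomorphisms supported in $W_i$ — which is $C^\infty$-close to the identity once $M$ is large — turns the return map into $G^{1/M}$ on $W_i$, so that using the subword $(0\gamma_i)^M$ realizes the time-one map $G^1$ on $W_i$. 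Hence, using only perturbations supported in $W_i$ and iteration of the loop, we can realize near each flat point an arbitrary time-one map of an arbitrary one-parameter flow supported in $W_i$; and by Lemma~\ref{lemma:composition} a composition of two such maps (placed so that their windows, possibly after conjugation by the connecting map joining them, overlap) is an \emph{arbitrary} compactly supported diffeomorphism of that window.

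I then fix a small interval $I$ centred at an $(f_1,f_2)$-generic point $p_*$ disjoint from all the data, fix the affine $\Phi\colon[0,1]\to I$, and set $\tilde\theta_k=\Phi\circ\theta_k\circ\Phi^{-1}\colon I\to\Int I$, extending each $\tilde\theta_k$ to a compactly supported diffeomorphism of a slightly larger interval. For each $k$ I reserve four of the given flat points, $p_{4k-3},\dots,p_{4k}$; since the quadruples and the orbits $\Sigma^{\gamma_i}(p_i)$ are mutually disjoint, the perturbations made for different $k$ do not interfere and the $p_i$ stay flat periodic. Using the connecting lemmas (Lemma~\ref{lemma:connecting 1}, with the germ-prescribing refinements Lemmas~\ref{lemma:connecting 2} and~\ref{lemma:connecting inf}, all available since every $p_i$ is $(f_1,f_2)$-generic), I assemble $\omega_k$ as: route $p_*$ into $W_{4k-3}$, run $(0\gamma_{4k-3})^M$, route into $W_{4k-2}$, run $(0\gamma_{4k-2})^M,\ \dots,$ run $(0\gamma_{4k})^M$, route back to $I$. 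Once the connecting subwords and their small localized perturbations are fixed, the connecting portions of $\rho_h^{\omega_k}$ are definite diffeomorphisms between the windows, and the only remaining freedom is the choice of the four flow-time-one maps inserted at the four reserved flat points.

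The requirement $\rho_h^{\omega_k}|_I=\tilde\theta_k$ thus reduces to expressing one definite diffeomorphism $D_k$ — namely $\tilde\theta_k$ conjugated by the (now fixed) outer connecting maps — as a composition of the four flow-time-one maps, one per reserved flat point. Conjugating the inserted maps by the connecting maps gathers all four into a common window, where Lemma~\ref{lemma:composition} lets a product of two flow-time-one maps equal any compactly supported diffeomorphism; writing $D_k$ as a product of two near-identity diffeomorphisms — so that the intervals that Lemma~\ref{lemma:composition} outputs stay inside the prescribed small windows — is what forces four rather than two flat points per target. All perturbations used (the flattening perturbations near the $p_i$, the $G^{1/M}$'s, and the small connecting perturbations) have mutually disjoint supports and can be made jointly $C^\infty$-small, so their composition is a single $h\in\cN$. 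Together with Proposition~\ref{prop:flat periodic points} (which produces, inside any $\rho\in\cW^\infty$, the required $\infty$-flat periodic points after an arbitrarily small perturbation) this yields Theorem~\ref{thm:universal}.

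The delicate point — and the reason the connecting lemmas alone are not enough — is that $\rho_h^{\omega_k}$ must coincide with $\tilde\theta_k$ on the \emph{whole} interval $I$, while a connecting lemma controls only a germ at a single point. This is exactly why $\infty$-flatness is needed (Remark~\ref{rem:flat} upgrades a germ-level identity to an identity on a neighbourhood, so the loop realizes an honest diffeomorphism rather than merely a germ) and why Lemma~\ref{lemma:composition} is needed (it manufactures honest diffeomorphisms of intervals out of short compositions of flows). The accompanying combinatorial bookkeeping — keeping the windows $W_i$, the orbits $\Sigma^{\gamma_i}(p_i)$, the connecting supports and the interval $I$ mutually disjoint, keeping the Lemma~\ref{lemma:composition} supports inside the $W_i$, and choosing the loop lengths $M$ — is routine but is where the bulk of the care goes.
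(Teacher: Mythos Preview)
Your proposal assembles the right ingredients and correctly isolates why $\infty$-flatness (Remark~\ref{rem:flat}) and Lemma~\ref{lemma:composition} are the heart of the matter. But the architecture you describe differs from the paper's in a way that leaves a real gap.

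In the paper the $4N$ flat points are \emph{not} used as flow-insertion sites. They are consumed as the auxiliary points $\hat p$ in Lemma~\ref{lemma:connecting inf}: for each target $\theta_k$ one builds two loops $\bar\omega_{2k-1},\bar\omega_{2k}$ based at a fixed $(f_1,f_2)$-generic point $\bar p$, each loop passing through a separate intermediate point $q_j\in W_j$. Two flat points per loop (one for the outgoing leg, one for the return leg) make both connecting germs exactly the identity; a further small correction supported at $q_j$ then makes the full loop $\rho_{h_\#}^{\bar\omega_j}$ equal to the identity \emph{on an interval} $I_0\ni\bar p$. Only after this is Lemma~\ref{lemma:composition} applied, once, to each $\theta_k$, yielding just two flows $G_k^t,H_k^t$ supported in some $I_1$; the affine map $\Phi$ is chosen \emph{last}, with $\Phi(I_1)=I_0$, so the rescaled flows automatically sit inside $I_0$ and their conjugates by $\rho_{h_\#}^{0\eta_j}$ sit inside the $W_j$. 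Because the empty loops are already the identity on $I_0$, inserting $G_k^{1/m}$ and $H_k^{1/m}$ at $q_{2k}$ and $q_{2k-1}$ gives $\rho_h^{\bar\omega_{2k}^m\bar\omega_{2k-1}^m}=G_k^1\circ H_k^1=\Phi\theta_k\Phi^{-1}$ on $I_0$ with no residual factor to absorb.

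In your scheme the flat points are the flow sites and the connecting pieces $B_0,\dots,B_4$ are left as ``definite diffeomorphisms''. Then, as you say, one must realize $D_k=C^{-1}\tilde\theta_k$ (with $C=B_4\cdots B_0$) as a product of four flow time-one maps with supports in prescribed windows. Your stated mechanism --- write $D_k$ as a product of two \emph{near-identity} diffeomorphisms and apply Lemma~\ref{lemma:composition} to each factor --- does not work: $C$ is uncontrolled and $\tilde\theta_k$ is an arbitrary rescaled embedding, so $D_k$ is not close to the identity, and a product of two near-identity maps is near the identity. Even granting some decomposition $D_k=E_1E_2$, Lemma~\ref{lemma:composition} gives no bound on the support interval $I'$ in terms of the input, so you have no way to force the resulting flows into the pulled-back windows $(B_{j-1}\cdots B_0)^{-1}(W_j)$; and since you fixed $\Phi$ (hence $\tilde\theta_k$, hence $D_k$) \emph{before} invoking Lemma~\ref{lemma:composition}, you have lost the rescaling freedom the paper uses to resolve exactly this support constraint. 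If instead you try to tame $C$ by using Lemmas~\ref{lemma:connecting 2}/\ref{lemma:connecting inf} for the routing, each routing leg consumes a flat point as its $\hat p$, and with five legs and only four reserved points per target the bookkeeping does not close.
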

First, let us derive
Theorem \ref{thm:universal} from Proposition \ref{prop:universal}.
\begin{proof}
[Proof of Theorem \ref{thm:universal} from Proposition \ref{prop:universal}]
Fix $1 \leq r \leq \infty$.
The space $(\cE^r)^N$ admits a countable open basis $(\cO_n)_{n=1}^\infty$.
Let $\cU_n$ be the set consisting of $\rho \in \cW^r$
 which realize the semigroup action generated by some element of $\cO_n$.
The set $\cU_n$ is an open subset of $\cW^r$ and any element in $\bigcap_{n \geq 1}\cU_n$
generates a universal semigroup. Hence, it is sufficient to show that
every $\cU_n$ is a dense subset of $\cW^r$.

To see this, we fix a non-empty open subset $\cU$ of $\cW^r$ and an element $(\theta_1,\dots,\theta_N)$ of
$\cO_n \cap (\cE^{\infty})^N$ (notice that $\cO_n \cap (\cE^{\infty})^N$ is dense in $\cO_n$).
By Proposition~\ref{prop:flat periodic points} and the density of $\cW^\infty$ in $\cW^r$,
we take $\tilde{\rho} \in \cU \cap \cA^\infty(\{0,1,2\})$ which satisfies the hypothesis of Proposition~\ref{prop:universal}.
Then, by Proposition~\ref{prop:universal} there exist $\rho=(f_0,f_1,f_2) \in \cU \cap \cA^\infty(\{0,1,2\})$,
 a closed interval $I \subset [0,1]$,
 an affine diffeomorphism $\Phi:[0,1] \ra I$,
 and words $\omega_1,\dots,\omega_N \in \{0,1,2\}^*$
 such that
 $\Phi \circ \theta_k = (\rho^{\omega_k}|_I) \circ \Phi$
for every $k=1,\dots,N$.
This implies that $\rho$ realizes the semigroup action
 generated by $(\theta_1,\dots,\theta_N)$.
Therefore $\cU$ intersects with $\cU_n$.
Since the choice of $\cU$ is arbitrary,
 $\cU_n$ is a dense subset of $\cW^r$.
\end{proof}

Now we prove Proposition \ref{prop:universal}.

\begin{proof}
[Proof of Proposition \ref{prop:universal}]
Let $\cN^\#$ be a neighborhood of the identity in $\Diff^\infty([0,1])$
 such that any composition of $6N$ maps in $\cN^\#$ belongs to $\cN$.
Take a neighborhood $V_i$ of $p_i$ for each $i=1,\dots, 4N$,
 non-empty open subsets
 $U_1,\dots,U_{4N}$, $W_1,\dots,W_{2N}$ of $J \cap f(J)$,
 and an $(f_1,f_2)$-generic point $\bar{p} \in J$
 such that the sets
\begin{gather*}
 \{\bar{p}\},\; V_1, \dots, V_{4N},\; U_1,\dots,U_{4N},\; W_1,\dots, W_{2N},\\
  f_0^{-1}(W_1),\dots, f_0^{-1}(W_{2N}),
 \; \bigcup_{i=1}^{4N}\Sigma^{\gamma_i}(p_i)  
\end{gather*}
 are mutually disjoint.
 It is not difficult to check
 that such choice is possible (the detail is left to the reader).
Then choose $q_i \in W_i$ ($i=1,\ldots, 2N$)
so that $q_i$
 and $f_0^{-1}(q_i)$ are $(f_1,f_2)$-generic.
For each $j=1,\dots,2N$, by applying Lemma~\ref{lemma:connecting inf}
(for the neighborhood $V_j$, the open subset $U_j$, and the triple $(\bar{p}, f_0^{-1}(q_j) , p_j)$ taken as $(p, p', \hat{p})$)
we obtain $h_j \in \cN^\#$ and $\eta_j \in \{0,1,2\}^*$ such that
\begin{itemize}
 \item $\supp(h_j) \subset U_j \cup V_j$,
  $\Sigma_{h_j}^{\eta_j}(\bar{p})
 \subset \Sigma^{\gamma_j}(p_j) \cup U_j \cup  V_j$,
 \item $\rho_{h_j}^{\eta_j}(\bar{p})=f_0^{-1}(q_j)$, and
 $[f_0]_{f_0^{-1}(q_j)} \circ [\rho_{h_j}^{\eta_j}]_{\bar{p}}$
 is $\infty$-flat.
\end{itemize}
Similarly, for each $j=1,\dots,2N$,
by viewing $(q_j, \bar{p}, p_{2N+j})$ as $(p, p', \hat{p})$
in Lemma~\ref{lemma:connecting inf},
we take
 $h_{2N+j} \in \cN^\#$ and $\eta_{2N+j} \in \{0,1,2\}^*$
 (for $j = 1,\ldots, 2N$) such that
\begin{itemize}
 \item $\supp(h_{2N+j}) \subset U_{2N+j} \cup V_{2N+j}$,
 $\Sigma_{h_{2N+j}}^{\eta_{2N+j}}(q_j)
 \subset \Sigma^{\gamma_{2N+j}}(p_{2N+j}) \cup U_{2N+j} \cup V_{2N+j}$,
 \item $\rho_{h_{2N+j}}^{\eta_{2N+j}}(q_j)=\bar{p}$, and
 $[\rho_{h_{2N+j}}^{\eta_{2N+j}}]_{q_j}$ is $\infty$-flat.
\end{itemize}
Put $\bar{\omega}_j=\eta_{2N+j} 0 \eta_j$ for $j=1,\dots,2N$.
Since the germ
 $[\rho_{h_{2N+j}}^{\eta_{2N+j}}]_{q_j}
 \circ [\rho_{h_j}^{0 \eta_j}]_{\bar{p}}$ is $\infty$-flat
 and $q_j \not\in \Sigma_{h_{2N+j}}^{\eta_{2N+j}}(q_j) \cup
 \Sigma_{h_j}^{\eta_j}(\bar{p})$,
 there exists $\bar{h}_j \in \cN^\#$ such that
 $\supp{\bar{h}_j} \subset W_j$, $\bar{h}_j(q_j)=q_j$,
 and
\begin{equation*}
[\rho_{\hat{h}_j \circ h_{2N+j}
\circ h_j}^{\bar{\omega}_j}]_{\bar{p}}
 = [\rho_{h_{2N+j}}^{\eta_{2N+j}}]_{q_j}
  \circ [\bar{h}_j]_{q_j} \circ
  [\rho_{h_j}^{0 \eta_j}]_{\bar{p}}
\end{equation*}
 is equal to the identity map as a germ.
In particular,
 there exists an open interval $I_0$ containing $\bar{p}$ such that
 $\rho_{\bar{h}_j \circ h_{2N+j} \circ h_j}^{\bar{\omega}_j}(x)=x$
 for all $x \in I_0$.
Put $h_\#=h_1 \circ \dots \circ h_{4N}
\circ \bar{h}_1 \circ \dots \circ \bar{h}_{2N}$.
Remark that $h_\#$ is a composition of $6N$ maps in $\cN^\#$,
and hence, it belongs to $\cN$.
Now, $\rho_{h_\#}^{\bar{\omega}_j}$ is the identity map on $I_0$.
Notice that this is also true for smaller intervals in $I_0$
containing $\bar{p}$.
We shrink $I_0$ so that $\rho^{0\eta_j}_{h_\#}(I_0)\subset W_j$
for every $j=1,\ldots, 2N$.

For maps $(\theta_1, \ldots, \theta_N)$
in the assumption of Proposition~\ref{prop:universal},
we take their extension
$(\bar{\theta}_1, \ldots, \bar{\theta}_N)$ over $\mathbb{R}$
in such a way that each $\bar{\theta}_i$ has compact support.
Now, for each $\bar{\theta}_i$
we apply Lemma \ref{lemma:composition}
for $I = [0, 1]$ to obtain
one-parameter groups
$(\tilde{G}_1^t)_{t \in \RR}, \dots, (\tilde{G}_N^t)_{t \in \RR}$,
$(\tilde{H}_1^t)_{t \in \RR}, \dots, (\tilde{H}_N^t)_{t \in \RR}$ of
$C^{\infty}$-diffeomorphisms of $\mathbb{R}$ having compact support and
satisfying $\tilde{G}^1_i \circ \tilde{H}^1_i|_{[0, 1]} = \theta_i$ for every $i$.
Let $I_1$ be a compact interval in $\mathbb{R}$ which contains
$\mathrm{supp}(\tilde{G}_i^t) $ and $\mathrm{supp}(\tilde{H}_i^t) $
for every $i$.

Now we take an affine map $\Phi:\RR \ra \RR$ satisfying
$\Phi(I_1) = I_0$ and define
 one-parameter groups $(G_i^t)$ and $(H_i^t)$\, ($i=1,\ldots, N$)
 of $\mathrm{Diff}^{\infty}(\mathbb{R})$
 by $G_i^t = \Phi \circ \tilde{G}_i^t \circ \Phi^{-1}$ and
$H_i^t = \Phi \circ \tilde{H}_i^t \circ \Phi^{-1}$.
Notice that $\mathrm{supp}(G_i^t) $ and
$\mathrm{supp}(H_i^t) $ are contained in $I_0$,
hence their restriction on $[0, 1]$ give diffeomorphisms of $[0, 1]$.
Also, by definition we have
 $(G_i^1 \circ H_i^1) \circ \Phi =\Phi \circ \bar{\theta}_i
 = \Phi \circ \theta_i $
 on $[0,1]$ for every $i = 1, \ldots, N$.
Take a continuous family
 $(\bar{h}^t)_{t \in \RR}$ of diffeomorphism of $[0,1]$
defined as follows: for $k=1,\dots,N$, we put
\begin{equation*}
\bar{h}^t(x)=
\begin{cases}
\, \rho_{h_\#}^{0 \eta_{2k-1}} \circ H_k^t
 \circ (\rho_{h_\#}^{0 \eta_{2k-1}})^{-1}(x)
 & (x \in W_{2k-1}),\medskip\\
\, \rho_{h_\#}^{0 \eta_{2k}} \circ G_k^t
 \circ (\rho_{h_\#}^{0 \eta_{2k}})^{-1}(x)
 & (x \in W_{2k}),
\end{cases}
\end{equation*}
and then extend them as the identity map outside $W_j$.
Notice that this is a well-defined procedure
because of the choice of $I_0$.
Then, we have
\begin{align*}
\rho_{\bar{h}^t \circ h_\#}^{\bar{\omega}_{2k-1}}& =
 \rho_{h\#}^{\eta_{2N+2k-1}} \circ \bar{h}^t
 \circ \rho_{h_\#}^{0  \eta_{2k-1}} =H_k^t,\\
\rho_{\bar{h}^t \circ h_\#}^{\bar{\omega}_{2k}}& =
 \rho_{h_\#}^{\eta_{2N+2k}} \circ \bar{h}^t
 \circ \rho_{h_\#}^{0  \eta_{2k}} = G_k^t
\end{align*}
 on $I_0$.
Hence, for any $m \geq 1$,
\begin{equation*}
 \rho_{\bar{h}^{1/m} \circ h_\#}^{\bar{\omega}_{2k}^m\bar{\omega}_{2k-1}^m}
 = G_k^1 \circ H_k^1= \Phi \circ \theta_k \circ \Phi^{-1}
\end{equation*}
 on $I_0$.
If $m$ is sufficiently large,
 $\bar{h}^{1/m} \circ h_\#$ is contained in $\cN$.
Therefore, $h=\bar{h}^{1/m} \circ h_\#$,
 $(\omega_k=\bar{\omega}_{2k}^m\bar{\omega}_{2k-1}^m)_{k=1}^N$,
 and the map $\Phi$ satisfy the statement of the Proposition for sufficiently large $m$.
\end{proof}

%
%

\section{Wild behavior along generic infinite words} \label{sec:infinite path}
In this section we prove Theorem \ref{thm:growth along path}.
We start with a general lemma on generic infinite words.
\begin{lemma}
\label{lemma:generic path}
Let $X$ be a Baire space, $k$ be a positive integer,
 and $(X(\omega))_{\omega \in \{1,\dots,k\}^*}$ be a family
 of open subsets of $X$.
Suppose that $\bigcup_{\eta \in \{1,\dots,k \}^*}X(\eta\omega)$
 is dense in $X$ for any $\omega \in \{1,\dots,k\}^*$.
Then, for generic $x \in X$, the set
\begin{equation*}
 \{\,\ub{\omega} \in \{1,\dots,k\}^\infty \mid
 x \in X(\ub{\omega}|_n) \text{\rm\; for infinitely many $n$'s}\}
\end{equation*}
 is a residual subset of $\{1,\dots,k\}^\infty$.
\end{lemma}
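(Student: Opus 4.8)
The plan is to run a two–layer Baire–category argument: for each fixed $x$ the target set $\Omega_x \subset \{1,\dots,k\}^\infty$ is manifestly a $G_\delta$, and the hypothesis will be used to show that $\Omega_x$ is residual for every $x$ in a residual subset $R$ of $X$. Throughout I use the paper's convention that words are read right–to–left, so that $\ub{\omega}|_{n+1}$ is obtained from $\ub{\omega}|_n$ by prepending a letter; accordingly ``extending a finite word $\omega$'' means passing to a concatenation $\eta\omega$ with $\eta$ on the left, which is exactly the shape of the words appearing in the hypothesis.

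First I would introduce, for $\omega \in \{1,\dots,k\}^*$ and $m \geq 1$, the open set
\[
 Z(\omega,m)=\bigcup_{\substack{\eta \in \{1,\dots,k\}^* \\ |\eta\omega| \geq m}} X(\eta\omega) \subset X,
\]
and prove the key claim that $Z(\omega,m)$ is dense in $X$. This is where the hypothesis enters, and it is used for infinitely many words at once: setting $\ell=\max(0,m-|\omega|)$, one has $Z(\omega,m) \supseteq \bigcup_{|\sigma|=\ell}\bigl(\bigcup_{\eta}X(\eta(\sigma\omega))\bigr)$, and each inner union $\bigcup_{\eta}X(\eta(\sigma\omega))$ is dense by the hypothesis applied to the word $\sigma\omega$; a union of dense sets is dense, so $Z(\omega,m)$ is dense. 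Since there are only countably many pairs $(\omega,m)$ and $X$ is Baire, $R:=\bigcap_{\omega,m}Z(\omega,m)$ is a residual subset of $X$.

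Next I would fix $x\in R$ and analyze $\Omega_x:=\{\ub{\omega}\in\{1,\dots,k\}^\infty \mid x\in X(\ub{\omega}|_n)\text{ for infinitely many }n\}$. Writing $A_{x,m}=\bigcup_{n\geq m}\{\ub{\omega}\mid x\in X(\ub{\omega}|_n)\}$, one has $\Omega_x=\bigcap_{m\geq1}A_{x,m}$. Each set $\{\ub{\omega}\mid x\in X(\ub{\omega}|_n)\}$ is a union of length-$n$ cylinders, hence clopen, so $A_{x,m}$ is open; and $A_{x,m}$ is dense, because for any cylinder $[\tau_0]$ the fact that $x\in Z(\tau_0,m)$ yields $\eta$ with $|\eta\tau_0|\geq m$ and $x\in X(\eta\tau_0)$, whence $\emptyset\neq[\eta\tau_0]\subseteq[\tau_0]$ and $[\eta\tau_0]\subseteq A_{x,m}$. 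As $\{1,\dots,k\}^\infty$ is compact metrizable, hence Baire, $\Omega_x$ is residual. Since this holds for every $x$ in the residual set $R$, the lemma follows.

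I do not expect a serious obstacle here; the statement is essentially a soft category argument. The only points requiring genuine care are the bookkeeping with the right–to–left word convention (so that the ``prefixes'' $\ub{\omega}|_n$ match the concatenations $\eta\omega$ in the hypothesis) and the observation that the \emph{restricted} union $\bigcup_{|\eta|\geq\ell}X(\eta\omega)$—not merely the full union over all $\eta$—is still dense, which is the single place where the hypothesis must be invoked for an infinite family of words simultaneously.
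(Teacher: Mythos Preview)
Your proof is correct and follows essentially the same two-layer Baire argument as the paper: intersect the dense open sets $\bigcup_\eta X(\eta\omega)$ over all $\omega$ to get the residual $R\subset X$, then for $x\in R$ show that $\bigcup_{n\ge m}\Omega_n(x)$ is dense in $\{1,\dots,k\}^\infty$ for each $m$. Your extra index $m$ in $Z(\omega,m)$ is harmless bookkeeping---in fact $\bigcap_{\omega,m}Z(\omega,m)$ coincides with the paper's $\bigcap_\omega\bigcup_\eta X(\eta\omega)$, since any length constraint can be absorbed by first prepending a word $\sigma$ of suitable length and then applying the hypothesis to $\sigma\omega$ (as you yourself observe).
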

\begin{proof}
By the assumption, the set
\begin{equation*}
 \bigcap_{\omega \in \{1,\dots,k\}^*}
 \left(\bigcup_{\eta \in \{1,\dots,k\}^*} X(\eta\omega) \right),
\end{equation*}
 is a residual subset of $X$.
Fix a point $x$ in this subset.
For $n \geq 1$, put
\begin{equation*}
 \Omega_n (x)=\{\ub{\omega} \in \{1,\dots,k\}^\infty \mid x \in X(\ub{\omega}|_n)\}.
\end{equation*}
This is an open subset of $\{1,\dots,k\}^\infty$.
By the choice of $x$,
 for each $\ub{\omega} \in \{1,\dots,k\}^\infty$ and $m \geq 1$,
 there exists $\eta_m \in \{1,\dots,k\}^*$ such that
 $x \in X(\eta_m(\ub{\omega}|_m))$.
Hence,
 the open set
 $\{\ub{\omega}' \in \{1,\dots,k\}^* \mid \ub{\omega}'|_m=\ub{\omega}|_m\}$
 intersects with $\Omega_{m+|\eta_m|}(x)$ for any $m$.
This implies that $\bigcup_{n \geq N}\Omega_n (x)$ is
 a dense subset of $\{1,\dots,k\}^\infty$ for any $N \geq 1$.
Hence, $\bigcap_{N \geq 1}\bigcup_{n \geq N}\Omega_n (x)$
 is a residual subset of $\{1,\dots,k\}^\infty$.
Any infinite word $\ub{\omega}$ in this residual subset satisfies
 $x \in X(\ub{\omega}|_n)$ for infinitely many $n$'s.
\end{proof}
Now let us finish Theorem \ref{thm:growth along path}.
\begin{proof}[Proof of Theorem \ref{thm:growth along path}]
Fix a sequence $(a_n)_{n \geq 1}$ of positive integers.
For $\omega \in \{0,1,2\}^*$, put
\begin{equation*}
 \cW(\omega)=\{\rho \in \cW^r_\# \mid \#\Fix_a(\rho^\omega)
 \geq |\omega|\cdot a_{|\omega|}\}.
\end{equation*}
Since attracting periodic points persistent under small perturbations,
 $\cW(\omega)$ is an open subset of $\cW^r_\#$.
By Lemma~\ref{lemma:generic path}, it is sufficient to show that
 $\bigcup_{\eta \in \{0,1,2\}^*}\cW(\eta\omega)$
 is a dense subset of $\cW^r_\#$.
In other words, our goal is to show that
 for any given $\rho =(f_0, f_1, f_2) \in \cW^r_\#$, $\omega_0 \in \{0,1,2\}^*$,
 and any neighborhood $\cU \subset \Diff^r([0,1])$ of the identity map,
 there exist $\eta \in \{0,1,2\}^*$ and $h \in \cU$ such that
 $\#\Fix_a(\rho_h^{\eta\omega_0}) \geq n\cdot a_n$,
 where $n=|\eta\omega_0|$.
%
%
%
%
%

First, by Proposition \ref{prop:flat periodic points},
 after a small perturbation of $f_0$ if necessary,
 we may assume that there exist $p_* \in \Int J$
 and $\gamma \in \{0,1,2\}^*$ such that
 $p_*$ is an $r$-flat $0\gamma$-periodic point with
 $p_* \notin \Sigma^\gamma(p_*)$.
Now we choose $x_0 \in J \cap f_0^{-1}(J)$
such that $x_0$ is $(f_1, f_2)$-generic and
$x_0 \not\in \mathcal{O}_{-}(p_{\ast}, \rho)$.
Such $x_0$ exists since these
are generic conditions in $J \cap f_0^{-1}(J)$.
We put $x_1 = \rho^{\omega_0}(x_0)$.
Since $\rho \in \cW^r_\#$, we can
take $\omega_1 \in \{ 0, 1, 2\}^{\ast}$
such that $x_2 = \rho^{\omega_1\omega_0}(x_0) \in \mathrm{Int}(J)$.

We choose two disjoint non-empty open intervals $U$ and $V$
in $\mathrm{Int}(J) \cap \mathrm{Int}(f_0(J))$such that
\begin{itemize}
\item $V$ is a neighborhood of $p_{\ast}$, and
\item $U$, $V$ are so small that both are disjoint from
$\Sigma^{\omega_1 \omega_0}(x_0) \cup \Sigma^{\gamma}(p_{\ast})$.
\end{itemize}

Now we apply Lemma~\ref{lemma:connecting 2}
(viewing $(x_2, x_0, p_{\ast})$ as $(p, p', \hat{p})$)
to obtain $\omega_2$ and $h_1 \in \mathcal{U}$
such that $\supp (h_1) \subset U \cup V$,
$\rho_{h_1}^{\omega_2}(x_2) = x_0$, and
$[\rho^{\omega_2}_{h_1}(t)]_{x_2}
=[\rho^{\omega_1\omega_0}(t)]^{-1} + o(t^r)$.

By Remark~\ref{rmk:solo2},
we can choose $y \in U$ which appears in $\Sigma_{h_1}^{\omega_2}(x_2)$ only once.
We take the word $\omega'_2$ of the form $\omega_2|_k$
for some $k$ such that
$y= \rho_{h_1}^{\omega'_2}(x_2) \in U$.
Since $U$ is disjoint from
$\Sigma^{\omega_1 \omega_0}(x_0) \cup \Sigma^{\gamma}(p_{\ast})$, we have
$y \not\in\Sigma_{h_1}^{\omega'_2\omega_1\omega_0 \omega''_2}(y)$, where $\omega''_2$ is the (unique) word which satisfies
$\omega_2 = \omega''_2\omega'_2$.
Notice that, by construction, we also know that
$y$ is an $r$-flat
$(\omega'_2\omega_1\omega_0 \omega''_2)$-periodic point of
$\rho_{h_1}$.

Then, as in the proof of Theorem~\ref{thm:arbitrary growth}
in Section~\ref{sec:flat} (see also Remark~\ref{rem:perper}), we can find
$h_2 \in \mathrm{Diff}^{\infty}([0, 1])$ which is sufficiently close to
the identity in $C^r$ and supported in an arbitrarily small neighborhood of $y$
such that $\mathrm{Fix}_a(\rho_{h_2 \circ h_1}^{\omega'_2\omega_1\omega_0 \omega''_2}) \geq |\omega'_2\omega_1\omega_0 \omega''_2|\cdot a_{|\omega'_2\omega_1\omega_0 \omega''_2|}$.

Since each generator of $\rho_{h_2\circ h_1}$ is a diffeomorphisms on its image,
we have the same estimate for
$\mathrm{Fix}_a(\rho_{h_2 \circ h_1}^{\omega''_2\omega'_2\omega_1\omega_0 })$.
Thus letting $\eta = \omega''_2\omega'_2\omega_1 = \omega_2\omega_1$ and $h=h_2 \circ h_1$,
we complete the proof.

\end{proof}


\section{Robustness of sign condition}\label{sec:sign condition}

In this section we discuss the stability of the sign condition
which is posed on $\cW^r$ (see Section~\ref{sec:T}).
Let $f$ be an element of $\cE^2$,
$(p, q)$ be a repeller-attractor pair of $f$, and
$z_0$ be a heteroclinic point of it.
If $\bar{f}\in\cE^2$ is sufficiently
$C^1$-close to $f$, we can find the continuations
$p_{\bar{f}}$, $q_{\bar{f}}$ of $p$, $q$, respectively, and
$(p_{\bar{f}}, q_{\bar{f}})$ will be a repeller-attractor pair for $\bar f$.
Furthermore, $z_0$ will remain
 a heteroclinic point of $(p_{\bar{f}}, q_{\bar{f}})$
 and thus the sign of $z_0$ for $\bar{f}$ is well-defined.

Let $\varphi_{\bar{f}}:W^u(p_{\bar{f}},\bar{f}) \ra \RR$ 
 and $\psi_{\bar{f}}:W^s(p_{\bar{f}},\bar{f})$ be $C^r$ linearizations
 at $p_{\bar{f}}$ and $q_{\bar{f}}$ such that
 $\varphi_{\bar{f}}'(p_{\bar{f}})= \psi_{\bar{f}}'(q_{\bar{f}})=1$.
\begin{prop}\label{prop:conti}
In the above setting,
$A(\psi_{\bar{f}} \circ \varphi_{\bar{f}}^{-1})_{\varphi_{\bar{f}}(z_0)}$
 and
$S(\psi_{\bar{f}} \circ \varphi_{\bar{f}}^{-1})_{\varphi_{\bar{f}}(z_0)}$
 (if $r\geq 3$) vary continuously with respect to $\bar{f}$
 in the $C^2$- (resp. $C^3$-) topology.
\end{prop}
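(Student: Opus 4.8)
The plan is to reduce the statement to a single fact --- continuous dependence of the normalized linearizations in the $C^\ell$-topology --- and then to prove that fact via the Koenigs limit formula. First I would isolate the claim: for every integer $\ell$ with $2\le\ell\le r$, if $\bar f\to f$ in $C^\ell$ then, besides $p_{\bar f}\to p$ and $q_{\bar f}\to q$ (the standard persistence of hyperbolic fixed points, already recorded before the Proposition), one has $\varphi_{\bar f}\to\varphi$ in $C^\ell$ on a fixed neighbourhood of $z_0$ inside $W^u(p)$, and $\psi_{\bar f}\to\psi$ in $C^\ell$ on a fixed neighbourhood of $z_0$ inside $W^s(q)$, where $\varphi,\psi$ are the linearizations of $f$. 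Granting this, the Proposition follows at once: $\varphi_{\bar f}(z_0)\to\varphi(z_0)$; inversion and composition are continuous in the $C^\ell$-topology, so $\psi_{\bar f}\circ\varphi_{\bar f}^{-1}\to\psi\circ\varphi^{-1}$ in $C^\ell$ near $\varphi(z_0)$; and since $A(g)_x=g''(x)/g'(x)$ is a continuous function of the $2$-jet of $g$ at $x$ (for $g'(x)\ne 0$) and $S(g)_x$ of the $3$-jet, evaluation at the convergent points $\varphi_{\bar f}(z_0)$ gives $A(\psi_{\bar f}\circ\varphi_{\bar f}^{-1})_{\varphi_{\bar f}(z_0)}\to A(\psi\circ\varphi^{-1})_{\varphi(z_0)}$ when $\ell=2$, and the analogous convergence of $S$ when $\ell=3$ --- exactly the asserted continuity of $A$ in the $C^2$-topology and of $S$ in the $C^3$-topology.

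To prove the fact it is enough to treat an \emph{attracting} fixed point: the normalized linearization of $f$ at the repeller $p$ coincides with the one of $f^{-1}$ at the attracting fixed point $p$ of $f^{-1}$ (both are characterized, independently of the regularity, by $\varphi\circ f=\lambda_p\varphi$ together with $\varphi'(p)=1$), the inversion $f\mapsto f^{-1}$ is continuous in every $C^\ell$, and $z_0\in W^u(p)=W^s(p,f^{-1})$. So let $q$ be attracting with $\mu=\lambda_q\in(0,1)$; after a $C^\ell$-continuously varying translation I take $q_{\bar f}=0$, fix a small interval $[-\delta,\delta]$ and $\mu<\mu_1<1$ with $0<f_{\bar f}'\le\mu_1$ on $[-\delta,\delta]$ --- hence $|f_{\bar f}^{n}(x)|\le\mu_1^n\delta$ and $|(f_{\bar f}^{n})'(x)|\le\mu_1^n$ --- uniformly for $\bar f$ in a $C^\ell$-neighbourhood of $f$. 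Writing $\ell_{\bar f}(y):=\log\big(f_{\bar f}'(y)/\mu_{\bar f}\big)$, which is $C^{\ell-1}$ with $\ell_{\bar f}(0)=0$, the unique normalized linearization on $[-\delta,\delta]$ is given by the Koenigs limit
\begin{equation*}
\psi_{\bar f}(x)=\lim_{n\to\infty}\mu_{\bar f}^{-n}f_{\bar f}^{n}(x),\qquad \log\psi_{\bar f}'(x)=\sum_{n\ge0}\ell_{\bar f}\big(f_{\bar f}^{n}(x)\big),
\end{equation*}
and is extended to all of $W^s(0,\bar f)$ by $\psi_{\bar f}(x)=\mu_{\bar f}^{-n}\psi_{\bar f}(f_{\bar f}^{n}(x))$ for $n$ so large that $f_{\bar f}^{n}(x)\in[-\delta,\delta]$.

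A Fa\`a~di~Bruno computation --- using the elementary exponential bound $|(f_{\bar f}^{n})^{(i)}(x)|\lesssim\mu_1^n$ on $[-\delta,\delta]$ for $1\le i\le\ell-1$ (which follows from the nonlinearity cocycle of Remark~\ref{rmk.tech} and its higher-order analogues) --- shows that every derivative of order $\le\ell-1$ of the $n$-th term $\ell_{\bar f}\circ f_{\bar f}^{n}$ is $O(\mu_1^n)$, with constant uniform over a $C^\ell$-neighbourhood of $f$. Hence the series for $\log\psi_{\bar f}'$ converges in $C^{\ell-1}([-\delta,\delta])$ uniformly in $\bar f$, so $\psi_{\bar f}\to\psi$ in $C^\ell$ on $[-\delta,\delta]$ (and, in passing, $\psi$ is genuinely $C^\ell$). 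Finally, picking $n_0$ with $f^{n_0}(z_0)\in(-\delta,\delta)$ --- whence $f_{\bar f}^{n_0}(z_0)\in(-\delta,\delta)$ for $\bar f$ that is $C^1$-close to $f$ --- and writing $\psi_{\bar f}=\mu_{\bar f}^{-n_0}\psi_{\bar f}\circ f_{\bar f}^{n_0}$ on a neighbourhood of $z_0$, the local $C^\ell$-convergence on $[-\delta,\delta]$ together with $\mu_{\bar f}\to\mu$ and $f_{\bar f}^{n_0}\to f^{n_0}$ in $C^\ell$ near $z_0$ yields $\psi_{\bar f}\to\psi$ in $C^\ell$ near $z_0$; applying the same to $f^{-1}$ handles $\varphi_{\bar f}$. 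I expect the only genuine work to be this last estimate --- obtaining the geometric $C^{\ell-1}$-bound $O(\mu_1^n)$ on the Koenigs series terms with constants \emph{uniform} over a $C^\ell$-neighbourhood of $f$, since it is this uniformity that upgrades the classical existence of the linearization to continuous dependence; everything else is routine chain-rule bookkeeping plus the continuity of composition and inversion in $C^\ell$.
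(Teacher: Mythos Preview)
Your proof is correct, but it takes a different route from the paper's. The paper never proves continuous dependence of the linearizations themselves; instead it derives a direct formula for $A([\psi\circ\varphi^{-1}]_{x_0})$ as a limit involving only $f$ and the heteroclinic orbit: writing $z_n=f^n(z_0)$ and using the cocycle identity for $A$ together with $f^{2n}=(\psi^{-1}\circ L_{\lambda_q}^n)\circ(\psi\circ\varphi^{-1})\circ(L_{\lambda_p}^n\circ\varphi)$, one gets
\[
A([\psi\circ\varphi^{-1}]_{x_0})=\lim_{n\to\infty}\lambda_p^{-n}A([f^{2n}]_{z_{-n}})
=\lim_{n\to\infty}\sum_{m=-n}^{n-1}A([f]_{z_m})\,\lambda_p^{-n}(f^{m+n})'(z_{-n}),
\]
and then shows this series converges exponentially with constants stable under $C^2$-perturbation. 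Your argument instead proves the stronger intermediate statement that $\psi_{\bar f}$ and $\varphi_{\bar f}$ vary continuously in $C^\ell$ near $z_0$, via uniform convergence of the Koenigs series for $\log\psi_{\bar f}'$, and then reads off continuity of $A$ and $S$ from continuity of composition, inversion, and jet evaluation. The underlying analytic content is the same --- exponential decay of $(f^n)'$ near the hyperbolic fixed points, uniformly in $\bar f$ --- but the paper packages it into a single explicit formula for the invariant, while you package it into continuity of the linearization maps. Your version gives a cleaner and more reusable intermediate result; the paper's version is shorter and avoids having to track $C^\ell$-convergence of the full diffeomorphisms $\psi_{\bar f}$.
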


Proposition~\ref{prop:conti} immediately implies the following:
\begin{prop}
\label{prop:sign}
Under the above setting, if $\tau_A(z_0,f) \neq 0$,
then $\tau_A(z_0,f)=\tau_A(z_0,\bar{f})$ for any $\bar{f} \in \cE^2$ sufficiently $C^2$-close to $f$.
The same stability property holds for $\tau_S(z_0,f)$ in the $C^3$-topology.
\end{prop}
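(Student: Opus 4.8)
First, Proposition~\ref{prop:sign} is an immediate consequence of Proposition~\ref{prop:conti}: if $\tau_A(z_0,f)\neq0$ then $A(\psi_f\circ\varphi_f^{-1})_{\varphi_f(z_0)}\neq0$, so by Proposition~\ref{prop:conti} this number keeps its sign for every $\bar f\in\cE^2$ that is $C^2$-close to $f$; the case of $\tau_S$ in the $C^3$-topology is identical. So it suffices to prove Proposition~\ref{prop:conti}.

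The whole statement reduces to the continuous dependence, in the $C^r$-topology (with $r=2$ for the non-linearity and $r=3$ for the Schwarzian), of the normalized linearizations $\varphi_{\bar f},\psi_{\bar f}$ on the map $\bar f$, together with the continuity of the point $\varphi_{\bar f}(z_0)$. Indeed, fix a compact interval $K\subset W^u(p,f)$ with $z_0$ in its interior and a compact interval $K'\subset W^s(q,f)$ that is a neighbourhood of $z_0$; by the robustness of the heteroclinic connection noted above, $K\subset W^u(p_{\bar f},\bar f)$ and $K'\subset W^s(q_{\bar f},\bar f)$ for $\bar f$ $C^1$-close to $f$. Assume that $\varphi_{\bar f}\to\varphi_f$ in $C^r(K)$ and $\psi_{\bar f}\to\psi_f$ in $C^r(K')$. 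Since $\varphi_f'>0$, the derivatives $\varphi_{\bar f}'$ are bounded away from $0$ on $K$, so $\varphi_{\bar f}^{-1}\to\varphi_f^{-1}$ in $C^r$ on a fixed neighbourhood of $\varphi_f(z_0)$ whose $\varphi_{\bar f}^{-1}$-image stays inside $K'$; hence $\psi_{\bar f}\circ\varphi_{\bar f}^{-1}\to\psi_f\circ\varphi_f^{-1}$ in the $C^r$-topology near $\varphi_f(z_0)$. Moreover $\varphi_{\bar f}(z_0)\to\varphi_f(z_0)$, because $z_0$ is a fixed point of $[0,1]$ and $\varphi_{\bar f}\to\varphi_f$ uniformly near $z_0$. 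Finally $A(g)_x$ and $S(g)_x$ are rational, hence continuous, functions of $g'(x),g''(x),g'''(x)$ on $\{g'(x)\neq0\}$, so $A(\psi_{\bar f}\circ\varphi_{\bar f}^{-1})_{\varphi_{\bar f}(z_0)}$ and (when $r\ge3$) $S(\psi_{\bar f}\circ\varphi_{\bar f}^{-1})_{\varphi_{\bar f}(z_0)}$ converge to the values for $f$, which is the claim.

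To get the continuous dependence of the linearizations I would use the Koenigs construction with uniform estimates. In a local chart centred at the attractor $q_{\bar f}$, with multiplier $\lambda=\bar f'(q_{\bar f})\in(0,1)$, the normalized linearization is the Koenigs limit $\psi_{\bar f,\mathrm{loc}}=\lim_{n\to\infty}\lambda^{-n}\bar f^{\,n}$; near the repeller $p_{\bar f}$ the same is applied to the contraction $\bar f^{-1}$, giving $\varphi_{\bar f,\mathrm{loc}}=\lim_{n\to\infty}\lambda_p^{\,n}(\bar f^{-n}-p_{\bar f})$ with $\lambda_p=\bar f'(p_{\bar f})>1$. Writing $\varphi_n$ for the $n$-th approximant, the recursion $\varphi_{n+1}=\lambda^{-1}\varphi_n\circ\bar f$ together with hyperbolicity and the standard bounded-distortion bounds for the derivatives of $\bar f^{\,n}$ up to order $r$ yields $\|\varphi_{n+1}-\varphi_n\|_{C^r(V)}\le C\theta^n$ on a fixed neighbourhood $V$ of the fixed point, where $C$ and $\theta<1$ can be chosen uniformly for $\bar f$ in a small $C^r$-neighbourhood of $f$. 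Hence the limit exists in $C^r(V)$ and depends continuously on $\bar f$ in the $C^r$-topology, and by uniqueness of the normalized $C^r$-linearization (\cite[Theorem~2]{S}) it coincides with the germ of $\varphi_{\bar f}$, resp.\ $\psi_{\bar f}$. One then propagates these local linearizations over all of $W^u$ and $W^s$ by the conjugacy relations $\varphi_{\bar f}=\lambda_p^{\,k}\,\varphi_{\bar f,\mathrm{loc}}\circ\bar f^{-k}$ and $\psi_{\bar f}=\lambda^{-k}\,\psi_{\bar f,\mathrm{loc}}\circ\bar f^{\,k}$; because $z_0\in W^u(p,f)\cap W^s(q,f)$, a \emph{fixed} number $k$ of iterates already brings $\bar f^{\mp k}(z_0)$ into the local charts for all $\bar f$ near $f$, and $\bar f^{\mp k}$, $\lambda(\bar f)$, $\lambda_p(\bar f)$ all vary continuously; this gives the $C^r(K)$- and $C^r(K')$-convergence used above as well as the convergence $\varphi_{\bar f}(z_0)\to\varphi_f(z_0)$.

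The main obstacle is exactly this uniform-in-$\bar f$ Koenigs estimate, i.e.\ a parametrized version of the smooth-linearization theorem: one must verify that the geometric rate $\theta$ and the constant $C$ in $\|\varphi_{n+1}-\varphi_n\|_{C^r}\le C\theta^n$ are locally uniform in $\bar f$. This is classical: differentiating the recursion $\varphi_{n+1}=\lambda^{-1}\varphi_n\circ\bar f$ and using that $\bar f^{\,n}$ and its derivatives up to order $r$ decay like $\theta^n$ on $V$ with bounds that vary continuously with $\bar f$, one controls $\varphi_{n+1}-\varphi_n$ in $C^r$; I would either carry this out or quote it from the literature on $C^r$-linearization. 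Everything else — composition, inversion, evaluation, and the continuity of $A$ and $S$ as functions of jets — is routine.
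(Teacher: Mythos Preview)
Your reduction of Proposition~\ref{prop:sign} to Proposition~\ref{prop:conti} is exactly what the paper does, and your plan for Proposition~\ref{prop:conti} is correct. However, the route is genuinely different from the paper's.

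You argue via continuous $C^r$-dependence of the full normalized linearizations $\varphi_{\bar f},\psi_{\bar f}$ on $\bar f$, established through a uniform (parametrized) Koenigs estimate, and then push this continuity through composition, inversion, and the jet-expressions for $A$ and $S$. The paper avoids proving anything about the linearizations themselves varying continuously. Instead, it works directly with the quantity of interest: using the cocycle property of $A$ and the conjugacy relations, it shows that
\[
A([\psi\circ\varphi^{-1}]_{\varphi(z_0)})=\lim_{n\to\infty}\lambda_p^{-n}\,A([f^{2n}]_{z_{-n}}),
\]
then expands $\lambda_p^{-n}A([f^{2n}]_{z_{-n}})$ as the finite sum $\sum_{m=-n}^{n-1}A([f]_{z_m})\cdot\lambda_p^{-n}(f^{m+n})'(z_{-n})$ and proves this converges exponentially, with rate and constants that are uniform for $\bar f$ in a $C^2$-neighbourhood of $f$. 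Continuity then follows because each term of the series is manifestly a continuous function of $\bar f$.

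What each approach buys: the paper's argument is entirely self-contained and elementary --- it never invokes or reproves a parametrized linearization theorem, only the cocycle identity and a distortion bound along the single heteroclinic orbit. Your approach is more conceptual and, once the parametrized Koenigs lemma is in hand, makes the result transparent; but that lemma is precisely the technical weight you identify as the ``main obstacle'', and carrying it out in $C^r$ is essentially the same amount of work (bounded distortion of iterates) that the paper does directly for the one scalar quantity it needs.
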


As a result, we see that the sign condition in the definition of $\cW^r$ is $C^r$-open
for any $r \geq 2$.

Let us start the proof of Proposition~\ref{prop:conti}.

\begin{proof}
We give the proof only for the continuity of
$A(\psi_{\bar{f}} \circ \varphi_{\bar{f}}^{-1})_{\varphi_{\bar{f}}(z_0)}$
Since the proof for
$S(\psi_{\bar{f}} \circ \varphi_{\bar{f}}^{-1})_{\varphi_{\bar{f}}(z_0)}$
 follows the same lines, we leave it to the reader.

Let $\vphi:W^u(p,f) \ra \RR$ and $\psi: W^s(q,f) \ra \RR$ be the normalized $C^2$-linearizations of $f$ at $p$ and $q$,
respectively. Put $\lambda_p=f'(p)$, $\lambda_q=f'(q)$,$x_0=\vphi(z_0)$,
$y_n=\psi \circ f^n(z_0)$  and $z_n=f^n(z_0)$ for $n \in \ZZ$.
Notice that we have the following:
\begin{equation*}
 f^{2n}= f^n \circ \psi^{-1} \circ \psi
 \circ \varphi^{-1} \circ \varphi \circ f^n
 = \left( \psi^{-1} \circ (L_{\lambda_q})^n \right)
 \circ (\psi \circ \varphi^{-1}) \circ
 \left((L_{\lambda_p})^n \circ \varphi \right).
\end{equation*}
For the second equality we used the fact that $\varphi$ and $\psi$ are linearizations.
Then by applying the cocycle property of $A$ to this equality, we obtain
\begin{align*}
\lambda_p^{-n} \cdot A([f^{2n}]_{z_{-n}})
 =  \lambda_q^n \cdot A([\psi^{-1}]_{y_n})
  \cdot (\psi \circ \vphi^{-1})'(x_0) \cdot \varphi'(z_{-n})\quad \\
 + A([\psi \circ \vphi^{-1}]_{x_0}) \cdot \vphi'(z_{-n})
 + \lambda_p^{-n} \cdot A([\vphi]_{z_{-n}}).
\end{align*}
Since $\lim_{n \to + \infty}\vphi'(z_{-n})  =\vphi'(p)=1$,
 $\lim_{n \to + \infty} y_n  = \psi(q)= 0$,
 and $\lambda_p >1 >\lambda_q>0$,
 we have
\begin{equation*}
A([\psi^{-1} \circ \vphi]_{x_0})
 =\lim_{n \ra +\infty}  \lambda_p^{-n} \cdot A([f^{2n}]_{z_{-n}})
\end{equation*}
On the other hand, the value
$\lambda_p^{-n} \cdot A([f^{2n}]_{z_{-n}}) $ can be
calculated by applying
the cocycle property of $A$ to $f^{2n}$ directly. Then we obtain
\begin{equation}
\label{eqn:A}
\lambda_p^{-n} \cdot A([f^{2n}]_{z_{-n}})
  = \sum_{m=-n}^{n-1} A([f]_{z_m}) \cdot \lambda_p^{-n}
 \cdot (f^{m+n})'(z_{-n}).
\end{equation}
Notice that the term $A([f]_{z_m})$ has the absolute value bounded
from above by a constant independent of $m$.
So let us analyze the term $\lambda_p^{-n} \cdot (f^{m+n})'(z_{-n})$
Since $\lambda_p = f'(p)$, we have
\begin{equation}\label{eqn:bdd}
 \left| \log (\lambda_p^{-n}(f^n)'(z_{-n}))\right|
  \leq \sum_{j=0}^{n-1}\left|\log f'(z_{-j})-\log f'(p)\right|
\end{equation}
Since $|z_{-n}-p|$ converges to $0$ exponentially as $n$ goes
to $\infty$, and the function $\log (f')$ is Lipschitz, we can deduce
that there exists a constant $C_1$ independent of $n$ such
that the last term in (\ref{eqn:bdd}) is bounded by $C_1$ from the above.

Take $\alpha \in (\max\{\lambda_q, \lambda_p^{-1}\},1)$
and fix $N >1$  sufficiently large so that for every $m$ satisfying
$|m|\geq N$, the inequality $f^m(z_0) < \alpha^{|m|-N}$ holds.
Such $N$ does exist since $z_0$ is a heteroclinic point.
Then we put $C_2=\max\{(f^N)'(z_0), (f^{-N})'(z_0)\}$.

Now, for any $m \in \ZZ$ with $|m| \geq N$, we have
\begin{equation}\label{eqn:exp}
 \lambda_p^{-n}\cdot (f^{m+n})'(z_{-n})
  = (f^m)'(z_0) \cdot \lambda_p^{-n} (f^n)'(z_{-n})
  <\alpha^{|m|-N} \cdot C_2 \cdot e^{C_1}.
\end{equation}
Notice that the choice of $\alpha$, $N$ and $C_2$ is independent of $n$,
 and the inequality (\ref{eqn:exp}) holds for every $\bar{f}$ 
 which is sufficiently $C^2$-close to $f$.
This implies that the infinite series (\ref{eqn:A}) converges
 exponentially and the convergence of
 $\lambda_p^{-n} A([f^{2n}]_{z_{-n}})$
 to $A([\psi \circ \vphi^{-1}]_{x_0})$ is uniform
 with respect to $C^2$-perturbation of $f$.
 This implies that $A([\psi \circ \vphi^{-1}]_{x_0})$
is a continuous function of $f$ in the $C^2$-topology.
\end{proof}

One can see from the proof that
 the pairs $(A([f^{2n}]_{f^{-n}(z)}),S([f^{2n}]_{f^{-n}(z)}))$
 and $(A[\psi \circ \vphi^{-1}]_\vphi(z),S[\psi \circ \vphi^{-1}]_\vphi(z))$
 have the same sign for any sufficiently large $n$.
This, together with the cocycle properties of $A$ and $S$,
 implies the following sufficient condition to
 determine $\tau_A(z,f)$ and $\tau_S(z,f)$.
\begin{cor}
\label{cor:sign}
Let $f$ be a map in $\cE^3$, $(p,q)$ be a repeller-attractor pair of $f$,
 and $\tau_1,\tau_2 \in \{\pm 1\}$.
If $\sgn(A(f)_z)=\tau_1$ and $\sgn(S(f)_z)=\tau_2$
 for all $z \in W^u(p) \cap W^s(q)$,
 then $\tau_A(z,f)=\tau_1$ and $\tau_S(z,f)=\tau_2$
 for any heteroclinic point $z$ of $(p,q)$.
\end{cor}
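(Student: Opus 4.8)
The statement to prove is Corollary~\ref{cor:sign}, which asserts that if the non-linearity $A(f)_z$ and the Schwarzian $S(f)_z$ have constant signs $\tau_1,\tau_2$ along the whole heteroclinic orbit $W^u(p)\cap W^s(q)$, then the invariants $\tau_A(z,f)$ and $\tau_S(z,f)$ equal $\tau_1$ and $\tau_2$ respectively. The key tool is already available: in the proof of Proposition~\ref{prop:conti} it was shown that
\[
 A([\psi\circ\varphi^{-1}]_{\varphi(z)})=\lim_{n\to+\infty}\lambda_p^{-n}A([f^{2n}]_{f^{-n}(z)}),
\]
and the analogous formula for $S$ (with $\lambda_p^{-2n}$ in place of $\lambda_p^{-n}$), together with the remark following that proof that the pairs $(A([f^{2n}]_{f^{-n}(z)}),S([f^{2n}]_{f^{-n}(z)}))$ and $(A([\psi\circ\varphi^{-1}]_{\varphi(z)}),S([\psi\circ\varphi^{-1}]_{\varphi(z)}))$ have the same sign for all large $n$. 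So the whole task reduces to showing that $A([f^{2n}]_{f^{-n}(z)})$ has sign $\tau_1$ and $S([f^{2n}]_{f^{-n}(z)})$ has sign $\tau_2$ for large $n$.

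The plan is to expand $A([f^{2n}]_{f^{-n}(z)})$ using the cocycle property of $A$ over the composition $f^{2n}=f\circ\cdots\circ f$ ($2n$ factors), based at the point $z_{-n}=f^{-n}(z)$. This gives, as in equation~(\ref{eqn:A}),
\[
 A([f^{2n}]_{z_{-n}})=\sum_{m=-n}^{n-1} A([f]_{z_m})\cdot (f^{m+n})'(z_{-n}),
\]
where $z_m=f^m(z)$ runs over the heteroclinic orbit. Every term $A([f]_{z_m})=A(f)_{z_m}$ has sign $\tau_1$ by hypothesis, and every factor $(f^{m+n})'(z_{-n})$ is strictly positive since $f$ is orientation-preserving. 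Hence every summand has sign $\tau_1$ (or is zero, but the hypothesis says $\tau_1\in\{\pm1\}$, so each $A(f)_{z_m}\neq0$ and the sum is a sum of strictly nonzero terms of the same sign), so $A([f^{2n}]_{z_{-n}})$ has sign $\tau_1$ for every $n\geq1$. The same argument applies verbatim to $S$: the cocycle property $S(F\circ G)=S(F)(G')^2+S(G)$ gives
\[
 S([f^{2n}]_{z_{-n}})=\sum_{m=-n}^{n-1} S([f]_{z_m})\cdot \big((f^{m+n})'(z_{-n})\big)^2,
\]
and each summand has sign $\tau_2$ because $((f^{m+n})')^2>0$. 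Therefore $S([f^{2n}]_{z_{-n}})$ has sign $\tau_2$ for every $n$.

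Combining: for all sufficiently large $n$, the pair $(A([\psi\circ\varphi^{-1}]_{\varphi(z)}),S([\psi\circ\varphi^{-1}]_{\varphi(z)}))$ has the same sign as $(A([f^{2n}]_{z_{-n}}),S([f^{2n}]_{z_{-n}}))=(\tau_1\cdot(\text{positive}),\tau_2\cdot(\text{positive}))$, so $\sgn(A([\psi\circ\varphi^{-1}]_{\varphi(z)}))=\tau_1$ and $\sgn(S([\psi\circ\varphi^{-1}]_{\varphi(z)}))=\tau_2$, which is exactly $\tau_A(z,f)=\tau_1$ and $\tau_S(z,f)=\tau_2$. Finally, since $\tau_A$ and $\tau_S$ are constant along the $f$-orbit of $z$ (as recorded in Section~\ref{sec:T}), the conclusion holds for every heteroclinic point of $(p,q)$, not just the chosen one. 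There is essentially no obstacle here: the only thing to be careful about is to note explicitly that the convergence argument in Proposition~\ref{prop:conti} guarantees the limit is nonzero and has the claimed sign — i.e. that the sign of the whole sum is governed termwise — which is immediate once one observes all terms have the same (strict) sign, so no cancellation can occur and the positivity of the derivative factors does the rest. The main (minor) point to get right is simply citing the sign-stability remark after Proposition~\ref{prop:conti} correctly rather than re-deriving the limit.
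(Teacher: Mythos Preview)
Your proof is correct and follows exactly the route the paper sketches: the paper states the corollary as a consequence of the observation (made just before Corollary~\ref{cor:sign}) that the sign of $(A([f^{2n}]_{f^{-n}(z)}),S([f^{2n}]_{f^{-n}(z)}))$ agrees with that of the Belitsky--Mather invariant for large $n$, combined with the cocycle expansion over the orbit, and you have simply written this out in full. The only point worth tightening is the nonvanishing of the limit, which you flag; it follows because the single $m=0$ term $A(f)_{z_0}\cdot\lambda_p^{-n}(f^n)'(z_{-n})$ already has a strictly positive limiting absolute value, and all remaining terms add with the same sign.
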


\section{A criterion for sign conditions}
\label{sec:criterion}
In this section, we give the following simple criterion
 for sign conditions,
 which is given only interms of positions and multipliers of repellers
 and attractors.
\begin{prop}
\label{prop:criterion 1}
If $f \in \cE^3$ has three fixed points $q_0<p<q_1$
 such that $f'(q_0)<f'(p)^{-1}<f'(q_1)<1<f'(p)$
 and $W^u(p,f)=(q_0,q_1)$,
 then $f$ satisfies Sign conditions I and II.
\end{prop}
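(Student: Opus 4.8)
The plan is to realize Sign conditions I and II using the two repeller--attractor pairs $(p,q_0)$ and $(p,q_1)$, whose sets of heteroclinic points are $W^u(p)\cap W^s(q_0)=(q_0,p)$ and $W^u(p)\cap W^s(q_1)=(p,q_1)$ (as $W^u(p)=(q_0,q_1)$ contains no fixed point other than $p$, the forward orbit of any point of $(p,q_1)$ increases to $q_1$ and its backward orbit decreases to $p$, and symmetrically on $(q_0,p)$). Write $\lambda=f'(p)>1$, $\mu_0=f'(q_0)$, $\mu_1=f'(q_1)$, so $0<\mu_0<\lambda^{-1}<\mu_1<1<\lambda$. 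Let $\varphi\colon W^u(p)\to\RR$ be the normalized $C^3$-linearization at $p$ (thus $\varphi\circ f=\lambda\varphi$, $\varphi(p)=0$, $\varphi'(p)=1$) and $\psi_0,\psi_1$ those at $q_0,q_1$; these exist and are $C^3$ since $f\in\cE^3$. The image of $\varphi$ is open, connected, contains $0$ and is invariant under $x\mapsto\lambda x$, hence equals $\RR$, so $\varphi((q_0,p))=(-\infty,0)$ and $\varphi((p,q_1))=(0,\infty)$; likewise $\psi_1$ carries $(p,q_1)$ increasingly onto $(-\infty,0)$ and $\psi_0$ carries $(q_0,p)$ increasingly onto $(0,\infty)$. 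Set $g_1=\psi_1\circ\varphi^{-1}\colon(0,\infty)\to(-\infty,0)$ and $g_0=\psi_0\circ\varphi^{-1}\colon(-\infty,0)\to(0,\infty)$; these are increasing $C^3$ diffeomorphisms satisfying $g_i(\lambda x)=\mu_i\,g_i(x)$. By the definition of the signs, $\tau_A(z,f)=\sgn A(g_i)_{\varphi(z)}$ and $\tau_S(z,f)=\sgn S(g_i)_{\varphi(z)}$ for every heteroclinic point $z$ of $(p,q_i)$, with $\varphi(z)$ running over all of $(0,\infty)$ (resp.\ $(-\infty,0)$) as $z$ does. It therefore suffices to exhibit a point of $(0,\infty)$ and a point of $(-\infty,0)$ at which $A(g_1)$ and $A(g_0)$ (resp.\ $S(g_1)$ and $S(g_0)$) have opposite signs.

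\emph{Sign condition I.} Differentiating $g_i(\lambda x)=\mu_i g_i(x)$ gives $g_i'(\lambda x)=(\mu_i/\lambda)g_i'(x)$, with $0<\mu_i/\lambda<1$ and $g_i'>0$. For $x>0$ we have $\lambda x>x$ but $g_1'(\lambda x)<g_1'(x)$, so $g_1'$ is not non-decreasing, i.e.\ $A(g_1)=g_1''/g_1'$ is not $\ge0$ throughout $(0,\infty)$; for $x<0$ we have $\lambda x<x$ but $g_0'(\lambda x)<g_0'(x)$, so $g_0'$ is not non-increasing and $A(g_0)$ is not $\le0$ throughout $(-\infty,0)$. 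Choosing $z_1=\varphi^{-1}(x_1)\in(p,q_1)$ with $A(g_1)_{x_1}<0$ and $z_2=\varphi^{-1}(x_2)\in(q_0,p)$ with $A(g_0)_{x_2}>0$ gives $\tau_A(z_1,f)\,\tau_A(z_2,f)<0$, which is Sign condition I for the pairs $(p,q_1)$ and $(p,q_0)$.

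\emph{Sign condition II.} Here I would use the classical identity $w''=-\tfrac12\,w\,S(g)$ for $w=(g')^{-1/2}$, valid wherever $g$ is $C^3$; since $w>0$, the condition $S(g)\le0$ is equivalent to convexity of $w$, and $S(g)\ge0$ to concavity of $w$. Let $w_1=(g_1')^{-1/2}$; from $g_1'(\lambda x)=(\mu_1/\lambda)g_1'(x)$ one gets $w_1(\lambda x)=\kappa_1 w_1(x)$ with $\kappa_1=(\lambda/\mu_1)^{1/2}$, where $\kappa_1>1$ and moreover $\kappa_1<\lambda$, the latter because $\lambda\mu_1>1$, i.e.\ $f'(q_1)>f'(p)^{-1}$. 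If $S(g_1)\le0$ throughout $(0,\infty)$, convexity of $w_1$ applied to the triple $x<\lambda x<\lambda^2 x$ gives $w_1(\lambda x)\le\frac{\lambda}{\lambda+1}w_1(x)+\frac{1}{\lambda+1}w_1(\lambda^2 x)$; inserting $w_1(\lambda x)=\kappa_1 w_1(x)$ and $w_1(\lambda^2 x)=\kappa_1^2 w_1(x)$ and dividing by $w_1(x)>0$ yields $\kappa_1(\lambda+1)\le\lambda+\kappa_1^2$, i.e.\ $(\kappa_1-1)(\kappa_1-\lambda)\ge0$, which forces $\kappa_1\ge\lambda$ since $\kappa_1>1$ --- a contradiction. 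Hence $S(g_1)_{x_3}>0$ for some $x_3>0$. Symmetrically, with $w_0=(g_0')^{-1/2}$ one has $w_0(\lambda x)=\kappa_0 w_0(x)$, $\kappa_0=(\lambda/\mu_0)^{1/2}$, $\kappa_0>1$, and now $\kappa_0>\lambda$ because $\lambda\mu_0<1$, i.e.\ $f'(q_0)<f'(p)^{-1}$; if $S(g_0)\ge0$ throughout $(-\infty,0)$, concavity of $w_0$ at $\lambda^2 x<\lambda x<x$ gives $w_0(\lambda x)\ge\frac{1}{\lambda+1}w_0(\lambda^2 x)+\frac{\lambda}{\lambda+1}w_0(x)$, hence $\kappa_0(\lambda+1)\ge\lambda+\kappa_0^2$, i.e.\ $(\kappa_0-1)(\kappa_0-\lambda)\le0$, forcing $\kappa_0\le\lambda$ --- again a contradiction. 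So $S(g_0)_{x_4}<0$ for some $x_4<0$, and then $z_3=\varphi^{-1}(x_3)$, $z_4=\varphi^{-1}(x_4)$ satisfy $\tau_S(z_3,f)\,\tau_S(z_4,f)<0$: Sign condition II.

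The only place the full strength of $f'(q_0)<f'(p)^{-1}<f'(q_1)$ is used (rather than just $f'(q_0),f'(q_1)<1<f'(p)$) is in fixing the sign of $\lambda\mu_i-1$, which decides whether $\kappa_i=(\lambda/\mu_i)^{1/2}$ is smaller or larger than $\lambda$, and hence which of the two signs of $S(g_i)$ is globally obstructed; these obstructed signs come out opposite for $q_0$ and $q_1$ precisely because $\mu_0<\lambda^{-1}<\mu_1$. For Sign condition I only $\mu_i/\lambda<1$ is relevant, and both pairs obstruct the same sign of $A(g_i)$. The remaining ingredients --- existence and $C^3$-smoothness of the normalized linearizations, the images and orientations of $\varphi,\psi_0,\psi_1$ on the two sides of $p$, and the identity $w''=-\tfrac12 wS(g)$ --- are standard; I expect the careful bookkeeping of those orientations (and thus of the directions of the inequalities between $\kappa_i$ and $\lambda$) to be the most error-prone step in a full write-up.
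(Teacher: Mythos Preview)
Your proof is correct and follows the same overall framework as the paper: reduce to the transition maps $g_i=\psi_i\circ\varphi^{-1}$, use the functional equation $g_i(\lambda x)=\mu_i g_i(x)$, and for the Schwarzian part pass to $w=(g')^{-1/2}$ via the identity $w''=-\tfrac12 w\,S(g)$. The paper packages this as a separate Proposition (for a single repeller--attractor pair it locates $z_*$ with $\tau_A(z_*,f)=\sgn(p-q)$ and, when $f'(p)f'(q)\neq1$, $z_\#$ with $\tau_S(z_\#,f)=\sgn(f'(p)f'(q)-1)$), then applies it to $(p,q_0)$ and $(p,q_1)$.

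The execution of the Schwarzian step differs somewhat. The paper only uses the values $F(\lambda^n)=-\mu^n$ at integer powers, applies the mean value theorem twice (once to $F$ to pin down $F'(x_n)$, then to $G=(F')^{-1/2}$ to find points $y_n$ with $G'(y_n)\asymp(\lambda\mu)^{-n/2}$), and concludes by letting $n\to\pm\infty$. You instead exploit the full functional equation $w(\lambda x)=\kappa\,w(x)$ with $\kappa=(\lambda/\mu)^{1/2}$ and a single convexity (or concavity) inequality on the triple $x,\lambda x,\lambda^2 x$, which yields $(\kappa-1)(\kappa-\lambda)\ge0$ (resp.\ $\le0$) and hence a contradiction depending on the sign of $\lambda\mu-1$. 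This is a cleaner and more self-contained argument; the paper's version has the slight advantage of working from the weaker discrete hypothesis $F(\lambda^n)=-\mu^n$, but in the present situation the continuous functional equation is available, so your route loses nothing.
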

The following criterion for existence of a persistent blender
 is shown in {\cite[Example 1]{Sh}.
\begin{prop}
\label{prop:blender}
 Suppose that $(f_1,f_2) \in \cA^1(\{1,2\})$ satisfies
 that $f_1'<1$ and $f_2'<1$ on an closed interval $[a,b] \subset (0,1)$,
 $f_1(a)=a$, $f_2(b)=b$, and $f_1(b)>f_2(a)$.
 Then $(f_1,f_2)$ is a $C^1$-persistent blender for any closed interval
 $J \subset (a,b)$.
\end{prop}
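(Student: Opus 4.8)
The plan is to recognize $(f_1,f_2)$, restricted to $[a,b]$, as a uniformly contracting iterated function system with overlap whose attractor is all of $[a,b]$, and to read off the blender property (and its $C^1$-robustness) from the standard description of such an attractor as the closure of the forward orbit of an arbitrary point; this recovers \cite[Example 1]{Sh}.

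First I would record the elementary facts on the compact interval $[a,b]$. Since $f_1'<1$ and $f_2'<1$ there, the constant $\kappa:=\max\{\sup_{[a,b]}f_1',\ \sup_{[a,b]}f_2'\}$ is $<1$, so each $f_i$ is a $\kappa$-contraction. From $f_1(a)=a$ together with $f_1'<1$ one gets $a<f_1(x)<x$ for $x\in(a,b]$, hence $f_1([a,b])=[a,f_1(b)]$, and symmetrically $f_2([a,b])=[f_2(a),b]$; in particular each $f_i$ maps $[a,b]$ into itself. Because $f_2(a)<f_1(b)$ by hypothesis, the two images cover $[a,b]$, i.e. $f_1([a,b])\cup f_2([a,b])=[a,b]$.

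Next I would carry out the density step. Fix a closed interval $J\subset(a,b)$, a point $x\in J$, and an arbitrary target $y\in J$. Applying the covering identity recursively, write $y=f_{i_1}(y_1)$ with $y_1\in[a,b]$, then $y_1=f_{i_2}(y_2)$ with $y_2\in[a,b]$, and so on, producing an infinite word $i_1i_2\cdots\in\{1,2\}^\infty$ with $y\in I_n:=f_{i_1}\circ\cdots\circ f_{i_n}([a,b])$ for every $n$. Since each $f_{i_j}$ is a $\kappa$-contraction preserving $[a,b]$, we have $|I_n|\le\kappa^n(b-a)\to 0$; and since $x\in[a,b]$ also $f_{i_1}\circ\cdots\circ f_{i_n}(x)\in I_n$, so $|f_{i_1}\circ\cdots\circ f_{i_n}(x)-y|\le\kappa^n(b-a)$. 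Choosing $\omega_n\in\{1,2\}^*$ with $\rho^{\omega_n}=f_{i_1}\circ\cdots\circ f_{i_n}$ gives $\rho^{\omega_n}(x)\to y$; as $y\in J$ was arbitrary, $\overline{\cO_+(x,\rho)}\supseteq J$, which is the blender property on $J$.

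The hard part is the persistence bookkeeping, since the literal hypotheses $f_1(a)=a$ and $f_2(b)=b$ are not $C^1$-open. My remedy is to run the above argument on two auxiliary intervals: a slightly enlarged $[a-\eta,b+\eta]$ for the contraction/invariance input and a slightly shrunk $K:=[a+\delta,b-\delta]\supset J$ for the covering input. For $\eta,\delta$ small, every fact used is a \emph{strict} inequality on a compact set --- $f_i'<1$ on $[a-\eta,b+\eta]$, each $f_i$ maps $[a-\eta,b+\eta]$ into its interior, and $f_1(a+\delta)<a+\delta$, $f_2(b-\delta)>b-\delta$, $f_2(a+\delta)<f_1(b-\delta)$ --- hence it survives $C^1$-small perturbation. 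So any $(g_1,g_2)$ that is $C^1$-close to $(f_1,f_2)$ still has each $g_i$ uniformly contracting and mapping $[a-\eta,b+\eta]$ into itself, with $g_1(K)\cup g_2(K)\supseteq K$; feeding these two ingredients into the density step (with $K$ as the covering interval and $[a-\eta,b+\eta]$ as the invariant interval) yields $\overline{\cO_+(x,(g_1,g_2))}\supseteq K\supseteq J$ for all $x\in J$. Thus $(g_1,g_2)$ is a blender on $J$, and the blender is $C^1$-persistent.
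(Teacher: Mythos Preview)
Your argument is correct. The paper does not supply its own proof of this proposition: it simply quotes the result as \cite[Example~1]{Sh} (both in Section~\ref{sec:T} and where Proposition~\ref{prop:blender} is stated in Section~\ref{sec:criterion}). So there is nothing to compare against beyond the citation; what you have written is precisely the standard overlapping-IFS argument one expects that reference to contain.

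A couple of small remarks for polish. First, the reason $f_i'<1$ extends to $[a-\eta,b+\eta]$ is that $f_i'$ is continuous on $[0,1]$ and strictly below $1$ on the compact $[a,b]$; you use this implicitly, and it is worth saying so since the hypothesis literally only gives $f_i'<1$ on $[a,b]$. Second, in the persistence step the crucial point is that the enlarged interval $[a-\eta,b+\eta]$ is \emph{forward-invariant} under both perturbed maps, so that when you compare $g_{i_1}\circ\cdots\circ g_{i_n}(x)$ with $g_{i_1}\circ\cdots\circ g_{i_n}(y_n)$ all intermediate points remain in the region where the contraction bound applies; you set this up correctly but the reader may appreciate having it flagged explicitly, since $K$ itself is \emph{not} forward-invariant (indeed $g_1(a+\delta)<a+\delta$). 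With those clarifications your proof is complete and self-contained.
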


Let us give a simple example of semigroups
 in $\cW^\infty$ by using these propositions.
Fix real numbers $p,q_0,q_1,r,\delta$ such that
 $0<q_0<p<q_1<1<r$, $r-p<q_1-q_0$, and $0<2\delta<\min\{q_0,1-q_1\}$.
Let $\rho=(f_0,f_1,f_2) \in \cA^\infty(\{0,1,2\})$
 be a triple given by
\begin{align*}
 f_0(x) & = x+\epsilon(x-q_0)(x-p)(x-q_1)(x-r), \\
 f_1(x) & = (1-\delta)x+\delta^2,\\
 f_2(x) & = (1-\delta)x+\delta(1-\delta).
\end{align*}
 with $\epsilon>0$.
A direct computation shows that
 $(p,q_0)$, $(p,q_1)$ are repeller-attractor pairs of $f_0$,
 $\delta$ is the unique fixed point of $f_1$,
 and $1-\delta$ is the unique fixed point of $f_2$.
Set $J=[2\delta,1-2\delta]$.
The interval $\Int J$ contains the fixed points $p,q_0,q_1$ of $f_0$
 and all heteroclinic points between them.
Since $0<2\delta<1$, we have $f_1(1-\delta)>f_2(\delta)$.
By Proposition {\ref{prop:blender}},
 the pair $(f_1,f_2)$ is a $C^1$-persistent blender on $J$.
Since $q_0<p<q_1<r$, $r-p<q_1-q_0$, and
\begin{align*}
 f_0'(p) \cdot f_0'(q_i) &
 = 1-\epsilon(p-q_i)^2\{r-p+(-1)^i(q_1-q_0)\}+O(\epsilon^2)
\end{align*}
 for each $i=0,1$,
 we have $f_0'(p) \cdot f_0'(q_0)<1<f_0'(p) \cdot f_0'(q_1)$
 if $\epsilon>0$ is sufficiently small.
By Proposition \ref{prop:criterion 1},
 $f_0$ satisfies Sign conditions I and II.
Therefore, $\rho=(f_0,f_1,f_2)$ is an element of $\cW^\infty$
 when $\epsilon$ is sufficiently small.
We also see that the set $\cW^\infty_\# \cap \cW^\infty_{\mathrm{att}}$
 defined in Section {\ref{sec:mainre}} is non-empty.
Indeed, if $\delta>0$ is sufficiently small,
 then $f_1^N([0,\delta]) \cup f_2^N([1-\delta,1]) \subset J$
 for some large $N$.
This implies that $\rho$ is an element of $\cW^\infty_\#$.
It is easy to check that $\rho$ is an element of $\cW^\infty_{\mathrm{att}}$
 if $\epsilon$ is sufficiently smaller than $\delta$.

Proposition {\ref{prop:criterion 1}}
 is a direct consequence of the following
\begin{prop}
\label{prop:criterion} 
The following hold
 for $f \in \cE^r$ with $r \geq 2$
 and its repeller-attractor pair $(p,q)$:
\begin{enumerate}
\item There exists $z_* \in (p,q)$ such that
$\tau_A(z_*,f)=\sgn(p-q)$.
\item If $r \geq 3$ and $f'(p)f'(q) \neq 1$,
 then there exists $z_\# \in (p,q)$ such that
\begin{equation*}
\tau_S(z_\#,f)=\sgn(f'(p)f'(q)-1). 
\end{equation*}
\end{enumerate}
\end{prop}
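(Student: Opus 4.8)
The plan is to reduce both assertions to an analysis of the Belitsky--Mather modulus $g:=\psi\circ\varphi^{-1}$ of the pair $(p,q)$, regarded as a one--dimensional diffeomorphism, and to extract the required signs from its functional equation together with soft convexity arguments. I would first normalize so that $p<q$ (here ``$(p,q)$'' denotes the open interval with endpoints $p,q$). This is legitimate: applying $f^{-1}$ to the pair $(q,p)$ turns $q$ into a repeller and $p$ into an attractor, the normalized linearizations of $f^{-1}$ at $q$ and $p$ are again $\psi$ and $\varphi$, so the modulus of $f^{-1}$ is $g^{-1}$; the cocycle identities of Remark~\ref{rmk.tech} applied to $g\circ g^{-1}=\Id$ give $\tau_A(z,f^{-1})=-\tau_A(z,f)$ and $\tau_S(z,f^{-1})=-\tau_S(z,f)$, while $\sgn(q-p)=-\sgn(p-q)$ and $\sgn(f^{-1}{}'(q)f^{-1}{}'(p)-1)=-\sgn(f'(p)f'(q)-1)$, so the statement for $(f^{-1},(q,p))$ is the statement for $(f,(p,q))$. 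Thus I assume $p<q$, put $\lambda_p=f'(p)>1>\lambda_q=f'(q)>0$, and must produce a heteroclinic point with $\tau_A=-1$ for (1), and one with $\tau_S=\sgn(\lambda_p\lambda_q-1)$ for (2).

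Next I would set up the domain of $g$. Since $W^u(p)\cap W^s(q)\neq\emptyset$ there is no fixed point of $f$ in $(p,q)$, hence $f(x)>x$ there; so $(p,q)$ is forward invariant, every backward orbit in $(p,q)$ decreases to $p$ and every forward orbit increases to $q$, and one obtains $W^u(p)\cap W^s(q)=(p,q)$. Consequently $\varphi$ (increasing, $\varphi(p)=0$) maps $(p,q)$ onto $(0,\infty)$, because $\varphi(f^n(z_0))=\lambda_p^n\varphi(z_0)\to\infty$, and similarly $\psi$ maps $(p,q)$ onto $(-\infty,0)$. So $g:(0,\infty)\to(-\infty,0)$ is an increasing diffeomorphism with $g(0^+)=-\infty$ and $g(x)\to0^-$ as $x\to\infty$, satisfying $g(\lambda_p x)=\lambda_q g(x)$, and for every $x_0\in(0,\infty)$ the point $z=\varphi^{-1}(x_0)$ is a heteroclinic point with $\tau_A(z,f)=\sgn g''(x_0)$ and $\tau_S(z,f)=\sgn S(g)_{x_0}$ (using $g'>0$). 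For (1) I then argue by contradiction: if $g''\ge0$ on $(0,\infty)$ then $g'$ is non-decreasing, so $g'\ge g'(1)>0$ on $[1,\infty)$ and therefore $g(x)\to+\infty$, contradicting $g(x)\to0^-$; hence $g''(x_*)<0$ for some $x_*$, and $z_*=\varphi^{-1}(x_*)$ works.

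For (2) I would pass to $v:=(g')^{-1/2}>0$, which is of class $C^2$ since $r\ge3$ and satisfies the classical identity $v''=-\tfrac12\,S(g)\,v$, so $\sgn v''=-\sgn S(g)$; differentiating $g(\lambda_p x)=\lambda_q g(x)$ gives $v(\lambda_p x)=\mu\,v(x)$ with $\mu:=(\lambda_p/\lambda_q)^{1/2}>1$, and one checks $\lambda_p\lambda_q>1\iff\mu<\lambda_p$ and $\lambda_p\lambda_q<1\iff\mu>\lambda_p$. If $\lambda_p\lambda_q>1$ and, for contradiction, $S(g)\le0$ everywhere, then $v$ is convex on $(0,\infty)$; since $v(\lambda_p^{-n}x_0)=\mu^{-n}v(x_0)\to0$ and $v>0$, convexity forces $v(0^+)=0$, and then, for a convex function that vanishes at $0$, the ratio $v(x)/x$ is non-decreasing, so $v(\lambda_p x)/(\lambda_p x)\ge v(x)/x$, i.e.\ $\mu\ge\lambda_p$ -- a contradiction. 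If instead $\lambda_p\lambda_q<1$ and $S(g)\ge0$ everywhere, then $v$ is concave and positive on $(0,\infty)$, hence non-decreasing, so $v(0^+)$ exists, is $\ge0$, and equals $0$ (again since $\mu^{-n}v(x_0)\to0$); for a concave function vanishing at $0$ the ratio $v(x)/x$ is non-increasing, so $\mu\le\lambda_p$ -- a contradiction. In either case $S(g)_{x_\#}$ has sign $\sgn(\lambda_p\lambda_q-1)$ for some $x_\#$, and $z_\#=\varphi^{-1}(x_\#)$ finishes the proof.

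The step I expect to be the main obstacle is part (2): one must extract a definite sign from the interplay of the scaling law and convexity. The device of replacing $g$ by $v=(g')^{-1/2}$ is what makes this manageable, since it converts the Schwarzian hypothesis into ordinary (semi)convexity and the scaling of $g$ into the clean geometric scaling $v(\lambda_p x)=\mu\,v(x)$, after which the elementary ``$v(x)/x$ is monotone'' lemma for (semi)convex functions vanishing at the origin decides the sign. The remaining points -- the identification $W^u(p)\cap W^s(q)=(p,q)$ and the boundary behaviour of $\varphi$, $\psi$ and $g$ -- are routine consequences of the absence of fixed points of $f$ in $(p,q)$.
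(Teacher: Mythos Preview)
Your argument is correct. The overall architecture matches the paper's: both reduce the question to the Belitsky--Mather transition map $g=\psi\circ\varphi^{-1}:(0,\infty)\to(-\infty,0)$ with its functional equation, and both handle the Schwarzian part via the substitution $v=(g')^{-1/2}$, turning $\sgn S(g)$ into $-\sgn v''$. The execution, however, differs. For part~(1) the paper (Lemma~\ref{lemma:criterion}) applies the mean value theorem on the intervals $[\lambda^n,\lambda^{n+1}]$ to produce points $x_n$ with $F'(x_n)=(\mu/\lambda)^n\cdot\mathrm{const}$, a strictly decreasing sequence, whence $F''<0$ somewhere; you instead run a one-line global contradiction ($g''\ge0\Rightarrow g'\ge g'(1)>0$ on $[1,\infty)\Rightarrow g\to+\infty$). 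For part~(2) the paper again uses a discrete mean-value argument, finding $y_n$ with $G'(y_n)\asymp(\lambda\mu)^{-n/2}$ and reading off the sign of $G''$ from the limits of $G'(y_n)$ as $n\to\pm\infty$; you instead exploit the exact scaling $v(\lambda_p x)=\mu\,v(x)$ together with the elementary fact that $v(x)/x$ is monotone for a (semi)convex or (semi)concave $v$ with $v(0^+)=0$, which yields $\mu\gtrless\lambda_p$ directly. Your approach is a bit more conceptual and avoids tracking explicit constants; the paper's is more hands-on but gives concrete points. A minor stylistic difference: the paper disposes of the case $p>q$ by a direct sign change in $F$, while you invoke the involution $f\mapsto f^{-1}$ and the cocycle identities---both are fine, though strictly speaking $f^{-1}\notin\cE^r$, so one should note that only the local $C^r$ structure near the pair is used.
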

We reduce the proposition to the following
\begin{lemma}
\label{lemma:criterion} 
Let $F$ be a $C^r$ map from $\RR_+=\{x \in \RR \mid x>0\}$
 to $\RR_-=\{x \in \RR \mid x<0\}$ with $r \geq 2$.
Suppose that $F'>0$
 and there exist positive real numbers $\lambda$ and $\mu$
 such that $\mu<1<\lambda$
 and $F(\lambda^n)= - \mu^n$ for all $n \in \ZZ$.
Then, the following hold:
\begin{enumerate}
\item There exists $x_* \in \RR_+$ such that
 $A(F)_{x_*}<0$.
\item If $r \geq 3$ and $\lambda\mu \neq 1$,
 then there exists $x_\# \in \RR^+$
 such that $\sgn(S(F)_{x_\#})=\sgn(\lambda \mu-1)$.
\end{enumerate}
\end{lemma}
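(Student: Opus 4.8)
The plan is to rest the whole argument on two elementary ingredients. The first is the classical identity
\[
 S(F)_{x}=-2\sqrt{F'(x)}\;\bigl((F')^{-1/2}\bigr)''(x),
\]
which comes from a one-line differentiation: with $u=F'>0$ and $V=u^{-1/2}$ one gets $-2u^{1/2}V''=\tfrac{u''}{u}-\tfrac32\bigl(\tfrac{u'}{u}\bigr)^{2}=S(F)$. Since $\sqrt{F'}>0$, this says that "$S(F)\le 0$ everywhere" is the same as convexity of $V:=(F')^{-1/2}$ on $\RR_{+}$, and "$S(F)\ge 0$ everywhere" the same as concavity of $V$. The second ingredient is the mean value theorem on the grid $\{\lambda^{n}\}_{n\in\ZZ}$: for each $n$ there is $\xi_{n}\in(\lambda^{n},\lambda^{n+1})$ with
\[
 F'(\xi_{n})=\frac{F(\lambda^{n+1})-F(\lambda^{n})}{\lambda^{n+1}-\lambda^{n}}
 =\frac{\mu^{n}-\mu^{n+1}}{\lambda^{n+1}-\lambda^{n}}=\Bigl(\tfrac{\mu}{\lambda}\Bigr)^{n}c,\qquad c:=\frac{1-\mu}{\lambda-1}>0,
\]
where $0<\mu/\lambda<1$ and $\xi_{n}>\lambda^{n}\to\infty$.

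For part (1) I would argue by contradiction: if $A(F)=F''/F'\ge 0$ everywhere then $F''\ge 0$ (as $F'>0$), so $F'$ is nondecreasing, hence $F'(\lambda^{m})\le F'(\xi_{m})=(\mu/\lambda)^{m}c$ for every $m$. Given any $x>0$, choosing $m$ with $\lambda^{m}>x$ gives $0<F'(x)\le F'(\lambda^{m})\le(\mu/\lambda)^{m}c$, and letting $m\to\infty$ forces $F'(x)\le 0$, a contradiction. So $A(F)_{x_{*}}<0$ for some $x_{*}$.

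For part (2) I would set $V=(F')^{-1/2}$ (a $C^{2}$ function, legitimate since $r\ge 3$), so that $V(\xi_{n})=c^{-1/2}(\lambda/\mu)^{n/2}$; the decisive quantity is the ratio $(\lambda/\mu)^{n/2}/\lambda^{n}=(\lambda\mu)^{-n/2}$. If $\lambda\mu>1$ and, contrary to the claim, $S(F)\le 0$ everywhere, then $V$ is convex; since $V(\xi_{n})\to\infty$, $V$ is unbounded and therefore cannot be nonincreasing, so $V'(x_{0})>0$ for some $x_{0}$, and convexity gives $V(x)\ge V(x_{0})+V'(x_{0})(x-x_{0})$ for $x\ge x_{0}$. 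Hence $c^{-1/2}(\lambda/\mu)^{n/2}=V(\xi_{n})\ge V'(x_{0})\lambda^{n}+O(1)$; dividing by $\lambda^{n}$ and letting $n\to\infty$ forces $V'(x_{0})\le 0$ (because $(\lambda\mu)^{-n/2}\to 0$), a contradiction. Symmetrically, if $\lambda\mu<1$ and $S(F)\ge 0$ everywhere, then $V$ is concave, so, lying below each of its tangent lines, $V(x)/x$ stays bounded as $x\to\infty$; but $V(\xi_{n})/\xi_{n}\ge V(\xi_{n})/\lambda^{n+1}=c^{-1/2}\lambda^{-1}(\lambda\mu)^{-n/2}\to\infty$, again a contradiction. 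In the first case we get $S(F)_{x_{\#}}>0$ for some $x_{\#}$, in the second $S(F)_{x_{\#}}<0$; either way $\sgn S(F)_{x_{\#}}=\sgn(\lambda\mu-1)$.

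The only step requiring genuine thought is this last paragraph: one must recognize that the sign condition on $S(F)$ is nothing but convexity or concavity of $(F')^{-1/2}$, and that a convex (resp. concave) positive function on $\RR_{+}$ is forced to grow at least (resp. at most) linearly, which clashes with the value $V(\xi_{n})=c^{-1/2}(\lambda/\mu)^{n/2}$ dictated by the grid exactly when $\lambda\mu\ne 1$. (This also explains why $\lambda\mu=1$ is excluded in (2): then $V(\xi_{n})$ grows exactly linearly in $\xi_{n}$, consistent with $F$ being allowed to equal the Möbius map $x\mapsto -1/x$, whose Schwarzian vanishes identically.) All the remaining steps are just the mean value theorem and the one-line Schwarzian identity.
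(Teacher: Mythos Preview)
Your proof is correct and follows essentially the same approach as the paper: both use the mean value theorem on the grid $\{\lambda^n\}$ to pin down $F'(\xi_n)=(\mu/\lambda)^n c$, and both exploit the identity $S(F)=-2(F')^{1/2}\,((F')^{-1/2})''$ for part~(2). The only difference is packaging: the paper argues directly (from $F'(\xi_{n+1})<F'(\xi_n)$ it gets $F''<0$ somewhere by another mean value theorem, and similarly applies the mean value theorem once more to $G=(F')^{-1/2}$ to produce points where $G'$ fails to be monotone), whereas you phrase both parts as contradictions via global convexity/concavity of $F'$ and of $(F')^{-1/2}$; these are equivalent arguments.
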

\begin{proof}
[Proof of Proposition \ref{prop:criterion}
 from Lemma \ref{lemma:criterion}]
Let $\varphi:W^u(p,f) \ra \RR$ 
 and $\psi:W^s(q,f) \ra \RR$ be the linearizations of $f$
 at $p$ and $q$ such that $\varphi'(p)=\psi'(q)=1$.
Set $I_p=\varphi(W^u(p,f) \cap W^s(q,f))$,
 $I_q=\psi(W^u(p,f) \cap W^s(q,f))$,
 and $H=\psi \circ \varphi^{-1}$.

First, we suppose that $p<q$.
Then, $I_p=\RR_+$, $I_q=\RR_-$, and
\begin{equation*}
 H(\lambda^n) = \psi \circ f^n \circ \varphi^{-1}(1) = \mu^n \cdot H(1)
\end{equation*}
 for any $n \in \ZZ$.
Set $F(x)=H(x)/|H(1)|$.
Then, the map $F$ satisfies the assumption of Lemma \ref{lemma:criterion}.
Since $A(H)_x=A(F)_x$ and $S(H)_x=S(F)_x$,
 by applying Lemma \ref{lemma:criterion},
 we obtain the proposition for this case.
The proof for the case $q>p$ can be obtained in a similar way
 with $F(x)=-H(x)/H(-1)$.
\end{proof}

\begin{proof}
[Proof of Lemma \ref{lemma:criterion}] 
By the mean value theorem,
 there exists $x_n \in (\lambda^n,\lambda^{n+1})$ such that
\begin{equation*}
 F'(x_n)= \frac{F(\lambda^{n+1})-F(\lambda^n)}{\lambda^{n+1}-\lambda^n}
 =\left(\frac{\mu}{\lambda}\right)^n\frac{1-\mu}{\lambda-1}
\end{equation*}
 for any $n \in \ZZ$.
Since $\lambda>1>\mu>0$, we have $F'(x_{n+1})<F'(x_n)$ for any $n$.
This implies that there exists $x_* \in (x_0,x_1)$
 such that $F''(x_*)<0$.

Set $G=(F')^{-\frac{1}{2}}$. By a direct calculation, we have
\begin{equation}
\label{eqn:Schwarzian}
 G''=-\frac{1}{2}(F')^{-\frac{1}{2}} \cdot S(F).
\end{equation}
Since $\lambda^n<x_n<\lambda^{n+1}$, we have
\begin{equation*}
\lambda^2-\lambda< \frac{x_{n+2}-x_n}{\lambda^n}< \lambda^3-1.
\end{equation*}
We also have
\begin{align*}
\frac{G(x_{n+2})-G(x_n)}{x_{n+2}-x_n} 
 & = \frac{\left(\left(\frac{\lambda}{\mu}\right)^{\frac{n+2}{2}}
   -\left(\frac{\lambda}{\mu}\right)^\frac{n}{2}\right)
  \cdot  \sqrt{\frac{\lambda-1}{1-\mu}}}{x_{n+2}-x_n}\\
 & = (\lambda \mu)^{-\frac{n}{2}}
 \cdot \left(\frac{\lambda}{\mu}-1\right)
 \cdot \sqrt{\frac{\lambda-1}{1-\mu}}
 \cdot \frac{\lambda^n}{x_{n+2}-x_n}.
\end{align*}
Hence, we can choose a constant $C>0$
 such that
\begin{equation*}
 C^{-1}(\lambda\mu)^{-\frac{n}{2}}
 < \frac{G(x_{n+2})-G(x_n)}{x_{n+2}-x_n} 
 < C(\lambda\mu)^{-\frac{n}{2}}
\end{equation*}
 for any $n \in \ZZ$.

By the mean value theorem, there exists $y_n \in (x_n,x_{n+2})$
 such that $G'(y_n)=(G(x_{n+2})-G(x_n))/(x_{n+2}-x_n)$.
Notice that $\lim_{n \ra +\infty} y_n=+\infty$
 and $\lim_{n \ra -\infty}y_n=0$.
If $\lambda\mu>1$, then
 $\lim_{n \ra +\infty}G'(y_n)=0$ and
 $\lim_{n \ra -\infty}G'(y_n)=+\infty$.
This implies that there exists $x_\# \in \RR_+$
 such that $G''(x_\#)<0$, and hence, $S(F)_{x_\#}>0$
 by Equation (\ref{eqn:Schwarzian}).
If $\lambda\mu<1$, then
 $\lim_{n \ra +\infty}G'(y_n)=+\infty$ and
 $\lim_{n \ra -\infty}G'(y_n)=0$.
This implies that there exists $x'_\# \in \RR_+$
 such that $G''(x_\#)>0$, and hence, $S(F)_{x'_\#}<0$.
\end{proof}

\vspace{1cm}

\begin{itemize}
\item[]  \emph{Masayuki Asaoka (asaoka@math.kyoto-u.ac.jp)}
\begin{itemize}
\item[] Department of Mathematics, Kyoto University,
\item[] Kitashirakawa-Oiwakecho, Kyoto, Japan
\end{itemize}
\item[] \emph{Katsutoshi Shinohara (herrsinon@07.alumni.u-tokyo.ac.jp)}
\begin{itemize}
\item[]  Department of Mathematical Sciences,
\item[]  Tokyo Metropolitan University, 1-1 Minami-Osawa, Hachioji
\item[]  Tokyo, Japan
\end{itemize}
\item[] \emph{Dmitry Turaev (d.turaev@imperial.ac.uk)}
\begin{itemize}
\item[] Department of Mathematics, Imperial College London
\item[] 180 Queen's Gate, London, United Kingdom,
\item[] and Lobachevsky University of Nizhny Novgorod, 603950 Russia.
\end{itemize}
\end{itemize}

\end{document}